\documentclass[12pt,a4paper]{amsart}
\addtolength{\textwidth}{4cm}
\addtolength{\evensidemargin}{-2.05cm}
\addtolength{\oddsidemargin}{-2.15cm}
\addtolength{\textheight}{1cm}
\addtolength{\topmargin}{-0.5cm}

\newcommand{\guio}[1]{\nobreakdash-\hspace{0pt}#1}

\usepackage{amsmath,amsthm}
\usepackage{amssymb}



\newcommand{\C}{{\mathbb C}}       
\newcommand{\R}{{\mathbb R}}       
\newcommand{\Z}{{\mathbb Z}}       
\newcommand{\DD}{{\mathcal D}}
\newcommand{\HH}{{\mathcal H}}
\newcommand{\LL}{{\mathcal L}}
\newcommand{\D}{{\Delta}}
\newcommand{\AZ}{{\mathcal A}}

\newcommand{\CC}{{\mathcal C}}

\newcommand{\diam}{{\rm d}}
\newcommand{\dist}{{\rm dist}}
\newcommand{\ds}{\displaystyle }

\newcommand{\fiproof}{{\hspace*{\fill} $\square$ \vspace{2pt}}}

\newcommand{\pv}{{\rm p.v.}}

\newcommand{\rf}[1]{{(\ref{#1})}}
\newcommand{\supp}{{\rm supp}}

\newcommand{\vphi}{{\varphi}}
\newcommand{\ve}{{\varepsilon}}

\newcommand{\vvv}{{\vspace{3mm}}}
\newcommand{\wt}[1]{{\widetilde{#1}}}
\newcommand{\wh}[1]{{\widehat{#1}}}

\newcommand{\noi}{\noindent}

\newtheorem{theorem}{Theorem}[section]
\newtheorem*{theorema*}{Theorem A}
\newtheorem*{theoremb*}{Theorem B}
\newtheorem*{theoremc*}{Theorem C}
\newtheorem*{theoremd*}{Theorem D}
\newtheorem{lemma}[theorem]{Lemma}

\newtheorem{claim}[theorem]{Claim}
\theoremstyle{definition}
\newtheorem{definition}[theorem]{Definition}

\theoremstyle{remark}
\newtheorem{remark}[theorem]{\bf Remark}

\numberwithin{equation}{section}


\begin{document}

\title[Uniform rectifiability and CZO's]{Uniform rectifiability,
Calder\'{o}n-Zygmund operators with odd kernel, and quasiorthogonality}

\author[XAVIER TOLSA]{Xavier Tolsa}

\address{Instituci\'o Catalana de Recerca i Estudis Avan\c{c}ats (ICREA) and Departament de Ma\-te\-m\`a\-ti\-ques, Universitat Aut\`onoma de
Bar\-ce\-lo\-na, 08193 Bellaterra (Barcelona), Catalonia}

\email{xtolsa@mat.uab.cat}

\thanks{Partially supported by grants MTM2007-62817 and
 and 2005-SGR-00744 (Generalitat
de Catalunya)}

\subjclass{Primary 28A75; Secondary 42B20}

\date{May, 2008.}

\begin{abstract}
In this paper we study some questions in connection with uniform rectifiability and the $L^2$~boundedness
of Calder\'on-Zygmund operators. We show that uniform rectifiability can be characterized in terms of
some new adimensional coefficients which are related to the Jones' $\beta$ numbers. We also use these new coefficients 
to prove that $n$-dimensional Calder\'on-Zygmund operators with odd kernel of type $\CC^2$ are bounded
in $L^2(\mu)$ if $\mu$ is an $n$-dimensional uniformly rectifiable measure. 
\end{abstract}

\maketitle


\section{Introduction}\label{secintro}

In this paper we study some questions in connection with uniform rectifiability and the $L^2$~boundedness
of Calder\'on-Zygmund operators.

Given $0<n\leq d$, we say that a Borel measure $\mu$ on $\R^d$ is $n$-dimensional Ahlfors-David regular, or simply 
AD regular, if there exists 
some constant $C_0$ such that $C_0^{-1}r^n\leq\mu(B(x,r))\leq C_0r^n$ for all $x\in\supp(\mu)$, $0<r\leq\diam(\supp(\mu))$.
It is not difficult to see that such a measure~$\mu$ must be of the form $d\mu=\rho\,d\HH^n_{|\supp(\mu)}$, where
 $\rho$ is some positive function bounded from above and from below and $\HH^n$ stands for the $n$-dimensional
Hausdorff measure. A Borel set $E\subset \R^d$ is called AD regular if the measure $\HH^n_{|E}$ is AD regular.

Throughout all the paper $\mu$ will be an $n$-dimensional AD regular measure on $\R^d$, with $n$~integer and $0<n\leq d$.

The notion of uniform $n$-rectifiability (or simply, uniform rectifiability) was introduced by David 
and Semmes in \cite{DS2}.
For $n=1$, an AD regular $1$-dimensional measure is uniformly rectifiable if its support is contained in an AD regular curve.
For an arbitrary integer $n\geq1$, the notion is more complicated. One of the many equivalent definitions (see Chapter
 I.1 of \cite{DS2}) is the following: $\mu$ is uniformly rectifiable if there exist $\theta,M>0$ so that, for
each $x\in\supp(\mu)$ and $R>0$, there is a Lipschitz mapping $g$ from the $n$-dimensional ball $B_n(0,R)\subset\R^n$
into $\R^d$ such that $g$ has Lipschitz norm $\leq M$ and
$$\mu\bigl(B(x,R)\cap g(B_n(0,R))\bigr) \geq \theta R^n.$$
In the language of \cite{DS2}, this means that {\em $\supp(\mu)$ has big pieces of Lipschitz images of $\R^n$}.ç
A Borel set $E\subset\R^d$ is called uniformly rectifiable if $\HH^n_{|E}$ is uniformly rectifiable.

The $n$-dimensional Riesz transform of a function $f:\R^d\to\R$ with respect to $\mu$ is
$$R_\mu f(x) = \int \frac{x-y}{|x-y|^{n+1}}\,f(y)\,d\mu(y),$$
for $x\not\in\supp(\mu)$. Notice that $(x-y)/|x-y|^n$ is a vectorial kernel.
In \cite{DS1} it is proved that if $\mu$ is uniformly rectifiable, then $R_\mu$ is bounded in $L^2(\mu)$ (see \rf{eq**} below
for the precise definition of $L^2$ boundedness of $R_\mu$). On the other hand, it is an open problem
if, given an $n$-dimensional 
AD~regular measure $\mu$, with $n>1$, 
the $L^2(\mu)$ boundedness of the $n$-dimensional Riesz transform implies the uniform rectifiability of $\mu$.
See \cite[Chapter 7]{Pajot}. 
 This problem
has only been solved in the case $n=1$ (by Mattila, Melnikov and Verdera \cite{MMV}), by using the notion of 
curvature of measures, which is useful only for~$n=1$ (see \cite{Farag}). 
In fact, there is a strong connection between this question for $n=1$ and the so called Painlev\'e problem (i.e.\ the problem of characterizing
removable singularities for bounded analytic function in a geometric way). See \cite{David-revista}, \cite{Leger}, \cite{Tolsa-sem}, and
\cite{Volberg}, for example. 
In the present paper we develop new techniques and we 
obtain some results in connection with the problem of $L^2$ boundedness of Riesz transforms and 
rectifiability.

A basic tool for the study of uniform rectifiability are the coefficients $\beta_p$.
Given $1\leq p < \infty$ and a cube $Q$, one sets
$$\beta_p(Q) = \inf_L 
\biggl\{ \frac1{\ell(Q)^n}\int_{2Q} \biggl(\frac{\dist(y,L)}{\ell(Q)}\biggr)^pd\mu(y)\biggr\}^{1/p},$$
where the infimum is taken over all $n$-planes in $\R^d$ and $\ell(Q)$ denotes the side length of $Q$.
For $p=\infty$ one has to replace the $L^p$ norm by a supremum:
$$\beta_\infty(Q) = \inf_L \biggl\{ \sup_{y\in \supp(\mu)\cap 2Q}
\frac{\dist(y,L)}{\ell(Q)}\biggr\},$$
where the infimum is taken over all $n$-planes $L$ in $\R^{d}$ again.
The coefficients $\beta_p$ first appeared in \cite{Jones-Escorial} and \cite{Jones}, in the case $n=1$, 
$p=\infty$. In
 \cite{Jones-Escorial} P.~Jones showed, among other results, how the $\beta_\infty$'s can be used to prove the $L^2$
boundedness of the Cauchy transform on Lipschitz graphs. In \cite{Jones}, he characterized $1$-dimensional 
uniformly rectifiable sets in terms of the~$\beta_\infty$'s. He also obtained other quantitative results on 
rectifiability without the AD regularity assumption. For other $p$'s and $n\geq1$, the $\beta_p$'s were introduced
by David and Semmes in their pioneering study of uniform rectifiability in \cite{DS1}.

In the present paper we will define other coefficients, in the spirit of the $\beta_p$'s, 
which are also useful for the study of uniform rectifiability.
Before introducing these coefficients, we need to define a metric on the space of
finite Borel measures (supported in a ball).
Given a closed ball $B\subset \R^d$ and two finite Borel measures $\sigma$, $\nu$
on $\R^d$ , we set
$$\dist_B(\sigma,\nu):= \sup\Bigl\{ \Bigl|{\textstyle \int f\,d\sigma  -
\int f\,d\nu}\Bigr|:\,{\rm Lip}(f) \leq1,\,\supp(f)\subset
B\Bigr\},$$
where ${\rm Lip}(f)$ stands for the Lipschitz constant of $f$.
It is easy to check that this is indeed a distance in the space of finite Borel measures supported in the interior of 
$B$. See \cite[Chapter 14]{Mattila-llibre} for other properties of this distance.

Given an AD regular measure $\mu$ on $\R^d$ and a cube $Q$ which intersects $\supp(\mu)$, 
we consider the closed ball $B_Q\!:=\! B(z_Q,3\,\diam(Q))$, where $z_Q$ and $\diam(Q)$ stand  for the center 
and diameter of $Q$, respectively. Then we define
$$
\alpha_\mu^n(Q) := \frac1{\ell(Q)^{n+1}}\,\inf_{c\geq0,L} \,\dist_{B_Q}(\mu,\,c\HH^n_{|L}),$$
where the infimum is taken over all the constants $c\geq0$ and all the $n$-planes $L$. 
For convenience, if $Q$ does not intersect $\supp(\mu)$, we set $\alpha^n_\mu(Q)=0$.
To simplify notation,
we will also write $\alpha(Q)$ instead of $\alpha_\mu^n(Q)$.

Notice that the coefficient $\alpha(Q)$ measures, in a scale invariant way, how close is $\mu$ to a flat $n$-dimensional measure in $B_Q$. Recall that a measure $\nu$ is said to be flat and $n$-dimensional if it is of the 
form $\nu=c\HH^n_{|L}$, for some constant $c>0$ and some $n$-plane $L$.
It is worthwile to compare the coefficients $\alpha$  with the $\beta_p$'s: basically, the latter coefficients only give information on how close
$\supp(\mu)\cap 2Q$ is to some $n$-plane (more precisely, how close is $\supp(\mu)\cap 2Q$ to be contained in some $n$-plane). 
On the other hand, the coefficients $\alpha$ contain more information than $\beta_p$. 
For instance, if $\supp(\mu)$ is contained in an $n$-plane, then $\beta_p(Q)=0$ for any $Q$. However, in this case we may still have $\alpha(Q)>0$. This will be the case if $\mu$ does not coincide with a flat a measure in $B_Q$.
In Lemma \ref{lempr1} we will show that, for $\mu$ AD regular,  
 $$\beta_1(Q)\leq C\alpha(Q)$$
for all dyadic cubes $Q\in \DD$ (see Section \ref{secprelim} for the precise definition of the dyadic cubes from $\DD$ in the context of AD regular
measures).
 As the preceding example shows, the opposite inequality is false in general.

In Section \ref{secrectif} we will prove the following result:

\begin{theorem}\label{teolips}
Consider the $n$-dimensional Lipschitz graph $\Gamma:=\{(x,y)\in \R^n \times \R^{d-n}:\,y=A(x)\}$, with
$\|\nabla A\|_\infty\leq C_1<\infty$, and let $d\mu(z)= \rho(z)\,d\HH^n_{|\Gamma}(z)$, where $\rho(\cdot)$ is a 
function such that $0\leq\rho(z)\leq C_1$ uniformly on $z\in\Gamma$. 
Then, the coefficients $\alpha$ satisfy the following Carleson packing condition:
\begin{equation}\label{eqpac6}
\sum_{Q\in \DD_{\R^d}(R)}\alpha(Q)^2\,\mu(Q) \leq C_2 \ell(R)^n,
\end{equation}
for any cube $R\subset\R^d$ which intersects $\supp(\mu)$, where $C_2$ depends only on $n,d$ and $C_1$.
\end{theorem}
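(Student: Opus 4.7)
The plan is to decompose $\alpha(Q)$ into a \emph{density} part, measuring the oscillation of $\rho$, and a \emph{geometric} part, measuring how far $\HH^n_{|\Gamma}$ is from being flat on $B_Q$.  Each contribution is then Carleson-packed by a separate mechanism: the $L^2$-orthogonality of dyadic martingale differences applied to the bounded function $\rho$ (the ``quasiorthogonality'' alluded to in the title) for the density part, and classical estimates for Lipschitz graphs (in the spirit of Dorronsoro and David--Semmes) for the geometric part.

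For each $Q\in\DD_{\R^d}(R)$ meeting $\supp\mu$, let $L_Q$ be an $n$-plane nearly realising $\beta_1(Q)$ for the measure $\HH^n_{|\Gamma}$ on $B_Q$, and set $c_Q := \HH^n(B_Q\cap\Gamma)^{-1}\int_{B_Q\cap\Gamma}\rho\,d\HH^n$. Testing the infimum defining $\alpha(Q)$ against the competitor $c_Q\HH^n_{|L_Q}$ and applying the triangle inequality for $\dist_{B_Q}$ yields $\alpha(Q)\le\alpha_\rho(Q)+\alpha_g(Q)$, where
\[
\alpha_\rho(Q):=\frac{\dist_{B_Q}\bigl((\rho-c_Q)\HH^n_{|\Gamma},\,0\bigr)}{\ell(Q)^{n+1}},\qquad
\alpha_g(Q):=\frac{c_Q\,\dist_{B_Q}\bigl(\HH^n_{|\Gamma},\,\HH^n_{|L_Q}\bigr)}{\ell(Q)^{n+1}}.
\]
It then suffices to prove the Carleson packing \rf{eqpac6} separately for $\alpha_\rho^2$ and $\alpha_g^2$.

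For $\alpha_\rho$: every $1$-Lipschitz $f$ with $\supp f\subset B_Q$ satisfies $\|f\|_\infty\le C\ell(Q)$, so $\alpha_\rho(Q)\le C\ell(Q)^{-n}\int_{B_Q\cap\Gamma}|\rho-c_Q|\,d\HH^n$, and Cauchy--Schwarz combined with AD regularity of $\HH^n_{|\Gamma}$ gives $\alpha_\rho(Q)^2\mu(Q)\le C\int_{B_Q\cap\Gamma}|\rho-c_Q|^2\,d\HH^n$. After replacing $c_Q$ by the $\rho$-average on a suitable dyadic cube of $\HH^n_{|\Gamma}$ comparable to $B_Q$ and reorganising the sum (each point of $\Gamma$ lies in a uniformly bounded number of balls $B_Q$ at any fixed scale), the $L^2(\HH^n_{|\Gamma})$-orthogonality of dyadic martingale differences yields
\[
\sum_{Q\in\DD_{\R^d}(R)}\alpha_\rho(Q)^2\mu(Q)\le C\,\|\rho\|_{L^2(\HH^n_{|R^*\cap\Gamma})}^2\le C\,C_1^2\,\ell(R)^n,
\]
for a fixed enlargement $R^*$ of $R$.

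For $\alpha_g$: parametrise $\Gamma\cap B_Q$ as a Lipschitz graph over $L_Q$ via $\Phi_Q(y)=y+\tilde A_Q(y)\nu_Q$ with Jacobian $J_Q$; then, for $1$-Lipschitz $f$ supported in $B_Q$, a change of variables (together with control of the boundary discrepancy between $\Phi_Q^{-1}(\Gamma\cap B_Q)$ and $L_Q\cap B_Q$) bounds $\dist_{B_Q}(\HH^n_{|\Gamma},\HH^n_{|L_Q})$ by $C\int|\tilde A_Q|\,d\HH^n$ plus auxiliary terms involving $\nabla\tilde A_Q$. The leading contribution gives $\alpha_g(Q)\le C\beta_1^\Gamma(Q)+\text{(oscillation of }\nabla A\text{)}$, where $\beta_1^\Gamma$ denotes the $\beta_1$ coefficient for $\HH^n_{|\Gamma}$; both pieces are Carleson-packed on a Lipschitz graph by the classical estimates of Dorronsoro and David--Semmes and a quasi-orthogonality argument analogous to the one used for the density, applied to the bounded function $\nabla A$. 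The main technical obstacle lies in the density step: because the balls $B_Q$ are not dyadic and their diameters are only comparable to $\ell(Q)$, the passage from ball-averages to dyadic-cube averages needed to invoke the martingale orthogonality cleanly requires some care, typically handled by averaging over a finite family of shifts of the dyadic grid.
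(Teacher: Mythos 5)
There is a genuine gap, and it sits exactly where you downplay it: the problem is not the balls\guio{versus}\guio{dyadic-cubes} bookkeeping you flag at the end, but the loss of cancellation in your very first estimate for $\alpha_\rho$. You bound $\alpha_\rho(Q)$ by discarding the Lipschitz structure of the test function (using only $\|f\|_\infty\lesssim\ell(Q)$), which yields $\alpha_\rho(Q)^2\mu(Q)\lesssim\int_{B_Q\cap\Gamma}|\rho-c_Q|^2\,d\HH^n$, i.e.\ the \emph{full} local variance of $\rho$ at scale $\ell(Q)$. Summing this over all $Q\in\DD_{\R^d}(R)$ cannot be handled by martingale orthogonality: since $\int_{Q_0}|\rho-\langle\rho\rangle_{Q_0}|^2=\sum_{I\subset Q_0}\|\Delta_I\rho\|_2^2$ (a sum over \emph{all} scales below $\ell(Q_0)$), each difference $\Delta_I\rho$ gets counted once for every ancestor of $I$, so your double sum is comparable to $\sum_I\|\Delta_I\rho\|_2^2\,\log\bigl(\ell(R)/\ell(I)\bigr)$. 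For a merely bounded $\rho$ this logarithmically weighted square function need not satisfy a Carleson condition (take $\rho$ a lacunary sum of mean-zero $\pm1$ bumps at scales $2^{-2^j}$ with coefficients $j^{-2}$), so the claimed inequality $\sum_Q\alpha_\rho(Q)^2\mu(Q)\lesssim\|\rho\|_{L^2}^2\lesssim\ell(R)^n$ does not follow; your intermediate bound is simply too lossy to give \rf{eqpac6}. The same defect reappears in the geometric half: packing ``the oscillation of $\nabla A$'' by ``quasi-orthogonality applied to the bounded function $\nabla A$'' is again an attempt to obtain a Carleson bound for squared local oscillations of a function about which you only know boundedness, and that is false in general.

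What is missing is a gain of a factor $\ell(I)/\ell(Q)$ per scale, and this is precisely how the paper proceeds. It pushes everything to the base plane: with $g(x)=\rho(\wt{A}(x))\,|J(\wt{A})(x)|$ (density and Jacobian bundled into one bounded function on $\R^n$), one expands $g=\sum_I\Delta_Ig+g_{I_{\wh{Q}}}$ and tests against the Lipschitz function $f\circ\wt{A}$; the vanishing mean of $\Delta_Ig$ gives $\bigl|\int f(\wt{A}(x))\,\Delta_Ig(x)\,dx\bigr|\lesssim\ell(I)\|\Delta_Ig\|_1\lesssim\ell(I)^{1+n/2}\|\Delta_Ig\|_2$, leading to \rf{sh1}: $\alpha(Q)\lesssim\beta_1(2Q)+\sum_I\frac{\ell(I)^{1+n/2}}{\ell(Q)^{1+n}}\|\Delta_Ig\|_2$. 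In the packing, Cauchy--Schwarz is applied with the weight $\ell(I)/\ell(Q)$, which --- unlike your constant-weight count of ancestors --- is geometrically summable over the cubes $Q\supset I$, and everything reduces to $\sum_I\|\Delta_Ig\|_2^2\lesssim\|g\|_\infty^2\ell(R)^n$, where only boundedness of $g$ (i.e.\ of $\rho$ and $\nabla A$) is used; the $\beta_1(2Q)^2$ part is packed by Dorronsoro/David--Semmes, as you also intend. If you wish to keep your splitting $\alpha\le\alpha_\rho+\alpha_g$, you must retain this cancellation: estimate $\int f\,(\rho-c_Q)\,d\HH^n$ difference by difference using the vanishing means against the Lipschitz $f$, rather than through $\|f\|_\infty$; the ball-average replacement and the shifted-grid device you invoke do not recover the lost factor.
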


For simplicity, in this theorem we assume that $R$ has sides parallel to the axes. We have denoted
by $\DD_{\R^d}(R)$ the collection of dyadic cubes generated by $R$, i.e. the collection of cubes contained 
in $R$ which are obtained by splitting $R$ dyadically.

Recall that \rf{eqpac6} also holds if one replaces the coefficients $\alpha(Q)$ by $\beta_p(Q)$, for $1\leq p\leq 2n/(n-2)$, as shown in \cite{Dorronsoro}.

We will also see in Section \ref{secrectif} that uniformly rectifiable sets can be characterized in terms of the
$\alpha$'s, similarly to what happens with the $\beta_p$'s and the so called bilateral $\beta_p$'s:

\begin{theorem}\label{teounif}
Let $\mu$ be an $n$-dimensional AD regular measure. The following are equivalent:
\begin{itemize}
\item[(a)] $\mu$ is uniformly rectifiable.
\item[(b)] For any dyadic cube $R\in\DD$,
\begin{equation}\label{pack0}
\sum_{Q\in \DD:Q\subset R}\alpha(Q)^2\,\mu(Q) \leq C \mu(R),
\end{equation}
with $C$ independent of $R$.

\item[(c)] For all $\ve>0$, there exists some constant $C(\ve)$ such that the collection ${\mathcal B}_\ve$ of those cubes $Q\in\DD$ such that $\alpha(Q)>\ve$ satisfies
$$\sum_{Q\in \mathcal{B}_\ve: Q\subset R} \mu(Q)\leq C(\ve)\mu(R),$$
for any cube $R\in\DD$.
\end{itemize}
\end{theorem}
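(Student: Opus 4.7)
The plan is to prove the cyclic chain (b)$\Rightarrow$(c)$\Rightarrow$(a)$\Rightarrow$(b). The implication (b)$\Rightarrow$(c) is immediate from Chebyshev's inequality: if $Q\in\mathcal{B}_\ve$ then $\alpha(Q)^2\geq\ve^2$, so that
\begin{equation*}
\ve^2\sum_{Q\in\mathcal{B}_\ve,\,Q\subset R}\mu(Q)\leq \sum_{Q\subset R}\alpha(Q)^2\,\mu(Q)\leq C\mu(R).
\end{equation*}

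For (c)$\Rightarrow$(a) I would reduce to the bilateral weak geometric lemma (BWGL) characterization of uniform rectifiability due to David and Semmes. The key point is to show that $\alpha$ dominates a bilateral $\beta$-type quantity. If $\alpha(Q)\leq\delta$, there exist $c\geq 0$ and an $n$-plane $L$ with $\dist_{B_Q}(\mu,c\HH^n_{|L})\leq \delta\,\ell(Q)^{n+1}$. One direction, namely $\supp(\mu)\cap\tfrac12 B_Q\subset\{y:\dist(y,L)\leq \eta(\delta)\ell(Q)\}$ with $\eta(\delta)\to 0$ as $\delta\to 0$, uses the AD regularity of $\mu$ and is in the spirit of Lemma \ref{lempr1}. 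The reverse inclusion $L\cap\tfrac12 B_Q\subset\{y:\dist(y,\supp(\mu))\leq \eta(\delta)\ell(Q)\}$ uses first that $c$ must be comparable to $1$ (since $\mu$ is AD regular and close to $c\HH^n_{|L}$ in $\dist_{B_Q}$), and then that $c\HH^n_{|L}$ places comparable mass on every ball of scale $\ell(Q)$ centered on $L\cap\tfrac12 B_Q$, which forces $\mu$ to have mass nearby. Assuming (c), these bilateral closeness conditions hold off a Carleson set, which is BWGL; hence $\mu$ is uniformly rectifiable.

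The main implication (a)$\Rightarrow$(b) proceeds via the David--Semmes corona decomposition for UR measures. Fixing a small parameter $\ve_0>0$, the family $\DD$ admits a partition into stopping regions $\{S\}$ with top cubes $\{Q(S)\}$ such that (i) $\{Q(S)\}$ satisfies a Carleson packing condition $\sum_{S:\,Q(S)\subset R}\mu(Q(S))\leq C\mu(R)$, and (ii) for each $S$ there is a Lipschitz graph $\Gamma_S$ with small Lipschitz norm such that $\supp(\mu)\cap B_Q$ and $\Gamma_S\cap B_Q$ are mutually within $\ve_0\,\ell(Q)$ for every $Q\in S$. Setting $\nu_S:=\HH^n_{|\Gamma_S}$ and using the triangle inequality for $\dist_{B_Q}$, we get
\begin{equation*}
\alpha_\mu(Q)\leq \alpha_{\nu_S}(Q)+\frac{1}{\ell(Q)^{n+1}}\,\dist_{B_Q}(\mu,\nu_S).
\end{equation*}
Applying Theorem \ref{teolips} to $\nu_S$ controls $\sum_{Q\in S}\alpha_{\nu_S}(Q)^2\,\mu(Q)\leq C\mu(Q(S))$, and summing over $S$ with $Q(S)\subset R$ via the top-cube packing yields the bound $C\mu(R)$ for this contribution.

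The main technical obstacle is the error term $\sum_{Q\in S}\dist_{B_Q}(\mu,\nu_S)^2\,\ell(Q)^{-2(n+1)}\,\mu(Q)$. The standard corona decomposition only provides bilateral closeness of the \emph{supports}, not of the measures in the $\dist_{B_Q}$ sense. Bridging this gap is where the real work lies: one needs to refine the stopping rule (by adding a stopping criterion when the density of $\mu$ projected onto $\Gamma_S$ deviates from $1$), so that within $S$ the measures $\mu$ and $\nu_S$ agree up to an AD regular weight close to $1$, and then exploit an $L^2$ quasi\guio{orthogonality} estimate on the density, in the spirit of Dorronsoro, to show this error sum is itself Carleson. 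Once this refined comparison is in place, adding the two contributions yields (b).
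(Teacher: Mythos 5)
Your proposal follows essentially the same route as the paper: (b)$\Rightarrow$(c) by Chebyshev, (c)$\Rightarrow$(a) by showing that small $\alpha(Q)$ forces bilateral closeness of $\supp(\mu)$ and the approximating plane and then invoking the bilateral weak geometric lemma (the paper does this via Lemma \ref{lempr1}, i.e.\ $b\beta_1(Q)\lesssim\alpha(Q)$, plus \cite[Theorem 2.4]{DS2}), and (a)$\Rightarrow$(b) via the corona decomposition combined with Theorem \ref{teolips} on the Lipschitz graph attached to each tree. Note that the paper itself only sketches (a)$\Rightarrow$(b), and the obstacle you flag --- that the corona construction must be arranged so that $\mu$ is close to $\HH^n_{|\Gamma_S}$ in the $\dist_{B_Q}$ sense on each tree, not merely that the supports are close --- is exactly the approximation step the paper leaves to the reader, so your sketch is correct and at the same level of completeness as the paper's own argument.
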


In the theorem, $\DD$ stands for the lattice of dyadic cubes associated to $\mu$ which is described in Section 
\ref{secprelim}.

Our main motivation to introduce the coefficients $\alpha(\cdot)$ is to study the relationship between uniform
rectifiability and the $L^2(\mu)$ boundedness of Calder\'on-Zygmund operators. In particular, we think that they can be a useful tool to study the
aforementioned problem of proving that the $L^2(\mu)$ boundedness of the Riesz transform $R_\mu$ implies the uniform rectifiability of $\mu$ when $\mu$ is AD regular, as well as other related problems (see \cite{Tolsa-jfa} for a recent application concerning the existence of principal values for Riesz transforms and rectifiability).
The kernels $K(\cdot):
\R^d\setminus\{0\}\to\R$ that we will consider satisfy
\begin{equation} \label{satis0}
|\nabla^j K(x)|\leq \frac{C}{|x|^{n+j}}\qquad \mbox{for $0\leq j\leq 2$ and $x\in\R^d\setminus\{0\}$,}
\end{equation}
and moreover $K(-x)=-K(x)$, for all $x\neq0$ (i.e.\ they are odd). The kernel $x/|x|^{n+1}$ of the $n$-dimensional Riesz transform
is a basic example (to be precise, we should consider the scalar components $x_i/|x|^{n+1}$.)

Given a finite positive or real Borel measure $\nu$, we define
$$T\nu(x) := \int K(x-y)\,d\nu(y), \qquad\mbox{for $x\in\R^d\setminus\supp(\nu)$}.$$
We say that $T$ is an $n$-dimensional  Calder\'{o}n-Zygmund operator (CZO) with kernel
$K(\cdot)$. The integral in the definition may not be
absolutely convergent for $x\in\supp(\nu)$. For this reason, we
consider the following $\ve$-truncated operators $T_\ve$, $\ve>0$:
$$T_\ve\nu(x) := \int_{|x-y|>\ve} K(x-y)\,d\nu(y), \qquad{x\in\R^d}.$$
Observe that now the integral on the right hand side above is absolutely convergent. We also denote
$$T_* \nu(x) = \sup_{\ve>0}|T_\ve \nu(x)|,$$
and
$$\pv T\nu(x) = \lim_{\ve\to0} T_\ve\nu(x),$$
whenever the limit exists.

If $\mu$ is a fixed positive Borel measure and $f\in
L^1_{loc}(\mu)$, we set
$$T_\mu f(x) := T(f\,d\mu)(x),\qquad \mbox{for $x\in\R^d\setminus\supp(f\,d\mu),$}$$
and
\begin{equation}\label{eq**}
T_{\mu,\ve} f(x) := T_\ve(f\,d\mu)(x).
\end{equation}
The last definition makes sense for all $x\in\R^d$ if, for
example, $f\in L^1(\mu)$. We say that $T_\mu$ is bounded on
$L^2(\mu)$ if the operators $T_{\mu,\ve}$ are bounded on
$L^2(\mu)$ uniformly on $\ve>0$.

In Sections \ref{secz1} and \ref{sec6} of this paper we will prove the following result:

\begin{theorem} \label{teomain}
Let $\mu$ be an $n$-dimensional AD regular measure on $\R^d$ and $T$ an $n$\guio{dimensional}~CZO associated to an odd 
kernel $K(\cdot):\R^d\setminus\{0\}\to\R$ satisfying \rf{satis0}.
Then we have
\begin{equation}\label{pral11}
\|T_*\mu\|_{L^2(\mu)}^2\lesssim \sum_{Q\in\DD} \alpha(Q)^2\mu(Q) + \mu(\C).
\end{equation}
If $\sum_{Q\in\DD} \alpha(Q)^2\mu(Q)<\infty$, then $\pv T\mu(x)$ exists for $\mu$-a.e. $x\in \R^d$ and
\begin{equation}\label{pral110}
\|\pv T\mu\|_{L^2(\mu)}^2\lesssim \sum_{Q\in\DD} \alpha(Q)^2\mu(Q).
\end{equation}
If $\mu$ is uniformly rectifiable, then $T_\mu$ is bounded in $L^2(\mu)$.
\end{theorem}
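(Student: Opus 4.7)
My plan is to establish the maximal estimate (1.3) as the main technical core; the assertions about principal values and about uniform rectifiability then follow by more standard arguments. For (1.3), I would use a quasi-orthogonal decomposition of $\mu$ indexed by the dyadic lattice $\DD$. For each cube $Q \in \DD$ pick a near-optimiser $(c_Q, L_Q)$ in the definition of $\alpha(Q)$ and set the approximating flat measure $\mu^Q := c_Q\,\HH^n_{|L_Q}$. Telescoping along dyadic ancestries of a fixed top-scale cube $R_0$ expresses $\mu - \mu^{R_0}$ as $\sum_Q \Delta_Q\mu$ (modulo a boundary contribution of total mass $\mu(\C)$), with $\Delta_Q\mu := \mu^{\text{parent}(Q)} - \mu^Q$ or a smoothed analogue. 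Because $K$ is odd and $C^2$, the operator $T$ applied to any flat $n$-measure $c\HH^n_{|L}$ vanishes on $L$ by antisymmetry and is smooth off $L$; combining this antisymmetry with the Lipschitz test in the definition of $\dist_{B_Q}$ produces a pointwise bound of the form $|T\Delta_Q\mu(x)| \lesssim \alpha(Q)\,\phi_Q(x)$, where $\phi_Q$ is a bump concentrated near $Q$ with the expected tail decay inherited from the $C^2$-smoothness of $K$. A Cotlar--Stein / Schur argument then aggregates these pointwise estimates quasi-orthogonally to yield $\|T\mu\|_{L^2(\mu)}^2 \lesssim \sum_Q \alpha(Q)^2\mu(Q) + \mu(\C)$, and the passage to the maximal operator $T_*\mu$ is handled by a standard Cotlar inequality based on the $C^2$ regularity of $K$ and the oddness-driven cancellation.

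The second claim is essentially automatic from (1.3): on a dense subclass of densities (for example compactly supported Lipschitz functions) the principal value exists by direct cancellation and kernel smoothness, and the maximal bound upgrades this to $\mu$-almost-everywhere existence, with the $L^2$-estimate (1.4) inherited from (1.3).

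For the assertion that $T_\mu$ is bounded on $L^2(\mu)$ when $\mu$ is uniformly rectifiable, I would combine (1.3) with Theorem \ref{teounif} via a standard local $T(1)$ argument. By Theorem \ref{teounif}(b), uniform rectifiability supplies the Carleson packing $\sum_{Q\subset R}\alpha(Q)^2\mu(Q) \leq C\mu(R)$ for every $R\in\DD$. Fix a ball $B$ and split $\mu = \mu|_{CB} + (\mu-\mu|_{CB})$ for a suitable enlargement $CB$; the far piece contributes a bounded oscillation on $B$ by kernel decay and AD regularity. For the near piece, a localised version of (1.3) (obtained by extending $\mu|_{CB}$ to an AD regular measure via a flat $n$-plane piece outside, noting that this does not inflate $\alpha(Q)$ for $Q\subset B$) gives $\int_B |T_\ve(\mu|_{CB})|^2\,d\mu \lesssim \sum_{Q\subset C'B}\alpha(Q)^2\mu(Q) + \mu(CB) \lesssim \mu(B)$. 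Thus $T_{\mu,\ve}1 \in BMO(\mu)$ uniformly in $\ve$, and since the weak boundedness property for odd $C^2$ kernels on AD regular measures follows from antisymmetry on symmetric cubes, the David--Semmes $T(1)$ theorem delivers the $L^2(\mu)$ boundedness of $T_\mu$.

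The main obstacle is the quasi-orthogonality step in the proof of (1.3). The operators $\mu \mapsto T\Delta_Q\mu$ are not genuinely orthogonal, and controlling the off-diagonal cross terms $\langle T\Delta_Q\mu, T\Delta_{Q'}\mu\rangle_{L^2(\mu)}$ demands sharp use of the oddness of $K$, its $C^2$ regularity, and the geometric compatibility of the approximations $\mu^Q$ at neighbouring scales. The $\alpha$-coefficients are tailor-made so that the diagonal sum assembles into $\sum_Q \alpha(Q)^2\mu(Q)$; absorbing the off-diagonal contributions into this diagonal, cleanly and with the right constants, is where the bulk of the work lies.
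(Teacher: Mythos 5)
There is a genuine gap, and it sits exactly at the point you yourself flag as ``the main obstacle.'' Your plan replaces the paper's decomposition of the operator by a decomposition of the measure into differences of flat approximations, $\Delta_Q\mu$, and rests on two claims: a pointwise bound $|T\Delta_Q\mu(x)|\lesssim \alpha(Q)\phi_Q(x)$, and a Cotlar--Stein/Schur aggregation. Neither holds as stated. For the pointwise bound, testing $K(x-\cdot)$ against the Lipschitz-dual distance that defines $\alpha(Q)$ requires a Lipschitz bound on the kernel near the planes, which degenerates as $\dist(x,L_Q)\to 0$; this is precisely why the paper only proves such a bound for the single-scale truncated pieces $T_j\mu$ and only at points of $L_Q$ (Lemma \ref{lempr3}(a)), and off the plane it pays an extra term $\dist(x,L_Q)/\ell(Q)$ which is \emph{not} $O(\alpha(Q))$ pointwise but only in integrated form, via $\dist(x,L_Q)\lesssim\sum_{P\subset Q,\,x\in P}\alpha(P)\ell(P)$ (Lemma \ref{lemdif1}) and the square function ${\mathcal A}(Q)$. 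Second, even granting your pointwise bound, bumps $\alpha(Q)\phi_Q$ over nested cubes produce cross terms of size $\alpha(Q)\alpha(Q')\mu(Q')$ with no gain in $\ell(Q')/\ell(Q)$, and $\sum_{Q'\subset Q}\alpha(Q)\alpha(Q')\mu(Q')$ is not bounded by $\sum_Q\alpha(Q)^2\mu(Q)$ (take $\alpha$ constant over $N$ generations below a fixed cube: the double sum picks up an extra factor $N$). The whole content of \rf{pral11} is the mechanism that produces a decay factor $2^{-|j-k|/2}$ between scales: in the paper this comes from a partition of unity at the intermediate scale $m=[(j+k)/2]$, the $\CC^2$ hypothesis \rf{satis0} to control the oscillation of the coarse piece $T_j\mu$ on intermediate cubes (Lemma \ref{lempr4}), and the antisymmetry of $T_k$ used to symmetrize over neighbouring intermediate cubes and to annihilate flat measures (the terms $A_{j,k}$, $B_{j,k}$ and Claims \ref{cla1}, \ref{cla2}). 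Your proposal defers exactly this step without supplying an idea for it, so the heart of the proof is missing. A smaller but real issue: the telescoping of the $\mu^Q$'s does not literally reconstruct $\mu$, since $\mu^Q$ approximates $\mu$ only inside $B_Q$ in the weak (Lipschitz-dual) sense; one needs cutoffs at every scale and a weak-limit argument to even set up the decomposition.

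The principal-value step is also not right as written: existence of $\lim_{\ve\to0}T_\ve(f\mu)(x)$ for smooth $f$ is a geometric issue, not a smoothness issue --- the oddness of $K$ yields cancellation only when $\mu$ is approximately symmetric (flat) around $x$, and principal values can fail a.e.\ on purely unrectifiable AD regular sets even for $f\equiv1$. The paper instead uses that $\sum_Q\alpha(Q)^2\mu(Q)<\infty$ forces $\supp(\mu)$ to be rectifiable, so $\mu$ lives on countably many Lipschitz graphs where $T$ is bounded by Subsection \ref{sub1}, and then writes $T_{(m)}\mu$ as an average of truncations to identify the limit. By contrast, your last step is essentially the paper's own argument: the packing condition from Theorem \ref{teounif} applied to $\mu_{|R}$ together with \rf{pral11} and the $T(1)$ theorem is exactly how the paper deduces $L^2(\mu)$ boundedness under uniform rectifiability, so that part is fine.
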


See Section \ref{secprelim} for the notation $\lesssim$.

The fact that uniform rectifiability implies the $L^2$ boundedness of CZO's with odd kernel
was already known for $\CC^\infty$ kernels satisfying
\begin{equation}\label{satis11}
|\nabla^j K(x)|\leq \frac{C(j)}{|x|^{n+j}}
\end{equation} 
for {\em all} $j\geq0$ (and maybe also assuming \rf{satis11} only for a finite but big number of $j$'s). 
See~\cite{David-surfaces} and Section II.6.B of \cite{David-LNM}. However, 
the result is new if one only asks~\rf{satis11} for $0\leq j\leq 2$, and so it improves on previous
results. 


Most proofs of the $L^2$ boundedness of CZO's (with $\CC^\infty$ kernel) with respect to uniformly rectifiable 
measures use the method of rotations
and the $L^2$ boundedness of the Cauchy transform on Lipschitz graphs (in fact, all proofs known by the author).
This is not the case with the arguments that we use in this paper.
Roughly speaking, our basic idea consists in 
decomposing $T\mu$
dyadically, and in obtaining estimates by comparing on each cube $Q$ the measure $\mu$ with the flat measure that 
minimizes $\alpha(Q)$ (notice that if $\nu$ is a flat measure then $T_\ve \nu$ vanishes on $\supp(\nu)$).
This idea is inspired in part by the proof
 of the $L^2$ boundedness of the Cauchy transform
on Lipschitz graphs by P. Jones in \cite{Jones-Escorial}. However, we recall that the arguments in 
\cite{Jones-Escorial} only work for the Cauchy transform.

Given a non-increasing radial $\CC^2$ function $\psi$ such
that $\chi_{B(0,1/2)}\leq \psi\leq \chi_{B(0,2)}$, for each
$j\in\Z$, we set $\psi_j(z) := \psi(2^jz)$ and
$\vphi_j:=\psi_j-\psi_{j+1}$,  so that each function $\vphi_j$ is
non-negative and supported in the annulus $A(0,2^{-j-2},2^{-j+1})$, 
and moreover we have $\sum_{j\in\Z}\vphi_j(x) = 1$
for any $x\in\R^d\setminus \{0\}$. For
each $j\in Z$ we denote $K_j(x) = \vphi_j(x)\,K(x)$ and
\begin{equation}\label{eqaux10}
T_j \mu(x) = \int K_j(x-y)\,d\mu(y).
\end{equation}
Notice that, at a formal level, we have $ T\mu = \sum_{j\in\Z} T_j\mu,$ and so
$$\|T\mu\|_{L^2(\mu)}^2 = \sum_{j\in \Z}\|T_j\mu\|_{L^2(\mu)}^2 + \sum_{j\neq k}\langle T_j\mu,\,T_k\mu\rangle.$$
To prove Theorem \ref{teomain} we will show that both sums in the right hand side above are bounded above by
$\sum_{Q\in\DD} \alpha(Q)^2\mu(Q)$. In fact, to show that 
\begin{equation}\label{eqqo1}
\sum_{j\in \Z}\|T_j\mu\|_{L^2(\mu)}^2 \lesssim \sum_{Q\in\DD} \alpha(Q)^2\mu(Q)
\end{equation}
is quite easy (see Lemma \ref{lemtq2} below), while the estimate of $\sum_{j\neq k}\langle T_j\mu,\,T_k\mu\rangle$
requires much more work.

The final part of this paper deals with Riesz transforms. 
Let $R_j$ denote the doubly truncated Riesz transform associated to the kernel $\vphi_j(x)\,x/|x|^{n+1}$, like in \rf{eqaux10}.
In Section \ref{secqo} we will prove the following:

\begin{theorem}\label{teoqo}
Let $\mu$ be an $n$-dimensional AD regular measure on $\R^d$. For $j\in\Z$, let $R_j$ denote the doubly 
truncated Riesz transform introduced in Definition \ref{defpsifi}. For any $Q\in\DD$, we have
\begin{equation}\label{eqteoqo}
\sum_{P\in \DD:P\subset Q}\beta_2(P)^2\mu(P)\leq C_3 \Bigl(\sum_{j\in\Z} \|R_j\mu_{|3Q}\|_{L^2(\mu)}^2 + \mu(Q)
\Bigr),
\end{equation}
Moreover, $C_3$ only depends on $n,d,C_0$ and the constants involving $\psi$
in Definition \ref{defpsifi}.
Therefore, if for any cube $Q\in\DD$,
\begin{equation}\label{eqqo9}
\sum_{j\in\Z} \|R_j\mu_{|Q}\|_{L^2(\mu)}^2 \lesssim\mu(Q),
\end{equation}
then $\mu$ is uniformly rectifiable.
\end{theorem}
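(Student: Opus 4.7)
The strategy is to first prove the quantitative inequality \rf{eqteoqo} and then derive uniform rectifiability as an immediate consequence. Indeed, granted \rf{eqteoqo}, the hypothesis \rf{eqqo9} together with $\mu(3Q)\lesssim\mu(Q)$ (from AD regularity) yields the Carleson packing condition $\sum_{P\subset Q}\beta_2(P)^2\mu(P)\lesssim\mu(Q)$ for every $Q\in\DD$. By the characterization of uniform rectifiability in terms of $\beta_2$-packings due to David and Semmes (see \cite{DS1}), this is equivalent to $\mu$ being uniformly rectifiable, which finishes the second assertion of the theorem. The whole task is therefore reduced to proving \rf{eqteoqo}.

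To establish \rf{eqteoqo}, I would fix $Q\in\DD$ and, for each $P\subset Q$ in $\DD$ with $\ell(P)\approx 2^{-j_P}$, compare $\mu_{|3Q}$ on $B_P$ with its best flat approximation $\sigma_P=c_P\HH^n_{|L_P}$ (essentially the minimizer realizing $\alpha(P)$). The crucial observation is that since $\vphi_{j_P}$ is radial, the kernel $K_{j_P}$ is odd, so by symmetry $R_{j_P}\sigma_P(x)$ vanishes on $L_P$; more precisely, for $x$ with $\dist(x,L_P)\lesssim\ell(P)$ one has $|R_{j_P}\sigma_P(x)|\approx \dist(x,L_P)/\ell(P)$, up to a harmless multiplicative constant depending only on $\psi$. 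The heart of the argument is the pointwise-integrated estimate
$$
\beta_2(P)^2\,\mu(P) \;\lesssim\; \int_{P}\bigl|R_{j_P}\mu_{|3Q}(x)\bigr|^2\,d\mu(x) \;+\; \mathcal{E}_P,
$$
where $\mathcal{E}_P$ is an error term coming from the discrepancy $R_{j_P}(\mu-\sigma_P)$ which, aggregated over all $P\subset Q$, is controlled by $C\mu(Q)$.

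Once this local estimate is in hand, I would sum over $P\subset Q$ in $\DD$. For each fixed scale $j$, the cubes $P$ with $\ell(P)\approx 2^{-j}$ are pairwise disjoint subsets of $Q$, so
$$
\sum_{P\subset Q,\,\ell(P)\approx 2^{-j}}\int_{P}\bigl|R_{j}\mu_{|3Q}\bigr|^2\,d\mu \;\leq\; \bigl\|R_{j}\mu_{|3Q}\bigr\|_{L^2(\mu)}^2,
$$
and summing over $j\in\Z$ gives the main right-hand side term of \rf{eqteoqo}. The aggregate error $\sum_{P\subset Q}\mathcal{E}_P$ should satisfy its own Carleson packing bound $\lesssim\mu(Q)$, by AD regularity together with a telescoping/packing argument that will likely re-use the Carleson estimates for the $\alpha$-coefficients established in Section \ref{secrectif} (recall that $\beta_1(P)\lesssim\alpha(P)$, and by Theorem \ref{teounif}, $\alpha$-packings are already available under uniform rectifiability, which will be the fixed-point we converge on).

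The main obstacle is the pointwise-integrated estimate above: converting the qualitative fact that $R_{j_P}$ annihilates flat measures on their support into a quantitative reverse inequality. The natural approach is to write $R_{j_P}\mu = R_{j_P}(\mu-\sigma_P)+R_{j_P}\sigma_P$, exploit the linear transverse growth of $|R_{j_P}\sigma_P(x)|$ away from $L_P$ to recover $\int_P(\dist(x,L_P)/\ell(P))^2\,d\mu(x)$, and then use AD regularity to identify this with $\beta_2(P)^2\mu(P)$ up to constants. Bounding the error $R_{j_P}(\mu-\sigma_P)$, either in $L^2$-mean over $P$ or in a quasiorthogonal fashion across scales, is the delicate part; here the $\CC^2$ hypothesis on $\psi$, the oddness of the kernel, and the scale-by-scale quasiorthogonality philosophy reflected in the paper's title and in Section \ref{sec6} will play the decisive role.
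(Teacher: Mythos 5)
Your reduction of the second statement to \rf{eqteoqo} is fine, but the proposed proof of \rf{eqteoqo} has a genuine gap, and the gap is circular at the decisive point. Your local estimate hinges on comparing $\mu$ with the flat measure $\sigma_P=c_P\HH^n_{|L_P}$ that nearly realizes $\alpha(P)$, using the lower bound $|R_{j_P}\sigma_P(x)|\gtrsim\dist(x,L_P)/\ell(P)$ and treating $R_{j_P}(\mu-\sigma_P)$ as an error. Two problems. First, the lower bound needs $c_P\approx1$, which by Lemma \ref{lempr0}(c) is only available when $\alpha(P)$ is small; for the cubes with $\alpha(P)\geq\ve_0$ your argument gives nothing, and bounding their total measure $\sum\mu(P)$ by $C\mu(Q)$ is precisely condition (c) of Theorem \ref{teounif}, i.e.\ equivalent to the uniform rectifiability you are trying to prove. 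Second, the error $\mathcal{E}_P$ is of size $\alpha(P)^2\mu(P)$ (the discrepancy of $R_{j_P}$ between $\mu$ and $\sigma_P$ on $B_P$ is $\lesssim\alpha(P)$, exactly as in Lemma \ref{lempr3}(a)), and under the sole hypothesis \rf{eqqo9} there is no bound for $\sum_{P\subset Q}\alpha(P)^2\mu(P)$: by Theorem \ref{teounif}(b) such a bound is again equivalent to uniform rectifiability. Your remark that ``$\alpha$-packings are already available under uniform rectifiability, which will be the fixed-point we converge on'' is this circularity made explicit; nor can the error be absorbed into the left-hand side, since $\alpha(P)^2\mu(P)$ carries no small factor relative to $\beta_2(P)^2\mu(P)$ (recall $\alpha(P)\gtrsim\beta_1(P)$, and $\alpha$ may be large even when all $\beta$'s vanish).

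The paper's mechanism is different and avoids flat approximations of $\mu$ altogether. For each $P$ one picks $n+1$ well-spread points $x_0,\dots,x_n\in P$ (Lemma \ref{lemli}) and Taylor-expands the differences $R_{\vphi,\ve}\mu(x_j)-R_{\vphi,\ve}\mu(x_0)$ at a much coarser scale $\ve$ with $r=\diam(P)\approx\tau\ve$ (Lemma \ref{lemtaylor}). A lower bound for the resulting quadratic form --- this is where the hypothesis \rf{cond17} that $|\psi'|$ is bounded below on $[1/3,3]$ and the AD regularity (mass in the annulus $A(x_0,\ve/\sqrt2,\sqrt2\ve)$) enter --- yields $\dist(x_{n+1},L_0)\lesssim\ve\sum_j|R_{\vphi,\ve}\mu(x_j)-R_{\vphi,\ve}\mu(x_0)|$ plus an error which is either $r^2/\ve$ (Lemma \ref{lemcas1}) or $(r^2/\ve^{n+2})\int_{B(x_0,2\ve)}\dist(y,L_0)\,d\mu$ (Lemma \ref{lemcas2}); after averaging over $x_j\in G_j$ and $x_{n+1}\in3P$ this error contributes $\beta_2$'s of comparably sized ancestors multiplied by the small factor $(r/\ve)^2\log(\ve/r)$, and it is absorbed into the left-hand side after summing over $P\subset Q$ by taking $\tau$ small, with a separate case when the $\beta_2$'s along the chain are not small. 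If you want to rescue your scheme you need an error with such a self-improving small prefactor; the $\alpha$-based discrepancy does not provide one.
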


Let us remark that the kernels of the doubly truncated Riesz transforms $R_j$ introduced in Definition~\ref{defpsifi}
are defined as we did just above \rf{eqaux10}, although some additional properties are required for the auxiliary 
functions $\vphi_j$.
Notice also that the estimate \rf{eqteoqo} can be understood as a kind of converse of inequality \rf{eqqo1}.

Mattila and Preiss \cite[Theorem 5.5]{MPr} have already proved  
that if all the CZO's with kernel of the form $\vphi(|x|) x/|x|^{n+1}$ satisfying \rf{satis11} are bounded in 
$L^2(\mu)$, then $\mu$ is $n$-rectifiable, i.e. there exist Lipschitz mappings $g_i:\R^n\to\R^d$ such that 
$\mu\Bigl(\R^n\setminus \bigcup_{i=1}^\infty g_i(\R^n)\Bigr) = 0.$
David and Semmes \cite[Theorem 2.59]{DS2} have shown that, moreover, $\mu$ is uniformly rectifiable. It is not difficult to show that
the assumption that all CZO's with kernel of the form $\vphi(|x|) x/|x|^{n+1}$ satisfying \rf{satis11} are bounded in 
$L^2(\mu)$ implies 
that \rf{eqqo9} holds (see \cite[Chapter 3]{DS1} for a related argument). 

\vvv
{\em Acknowledgement:} The author wants to thank the referee for helpful comments that improve on the readability of the paper.


\section{Preliminaries}\label{secprelim}

As usual, in the paper the letter `$C$' stands for some
constant which may change its value at different occurrences, which quite often depends only on $n$, $d$, and
the AD-regularity constant $C_0$. On
the other hand, constants with subscripts, such as $C_1$, retain
its value at different occurrences. The notation $A\lesssim B$
means that there is some fixed constant $C$ such that
$A\leq CB$, with $C$ as above. Also, $A\approx B$ is equivalent to $A\lesssim B
\lesssim A$.

The closed ball centered at $x$ with radius $r$ is denoted by $B(x,r)$.

Recall that throughout all the paper we are assuming that $\mu$ is a fixed AD regular $n$\guio{dimensional} measure. We
denote $E=\supp(\mu)$.
For simplicity of notation, we will also assume that $\diam(E)=\infty$. However all the results stated in the paper 
are valid without this assumption. The required modifications are minimal.

In this paper we will use the so called ``dyadic cubes'' built in \cite[Appendix 1]{David-LNM} (see also~\cite{Christ} 
for an alternative construction). 
These dyadic cubes are not true cubes, but they play this role with respect to $\mu$, in a sense. 

Let us explain 
which are the precise results and properties about our lattice of dyadic cubes. 
For each $j\in\Z$, one can construct a family $\DD_j$ of Borel subsets of $E$ (the dyadic cubes of the $j$-th
 generation) such that:
\begin{itemize}
\item[(i)] each $\DD_j$ is a partition of $E$, i.e.\ $E=\bigcup_{Q\in \DD_j} Q$ and $Q\cap Q'=\varnothing$ whenever $Q,Q'\in\DD_j$ and
$Q\neq Q'$;
\item[(ii)] if $Q\in\DD_j$ and $Q'\in\DD_k$ with $k\leq j$, then either $Q\subset Q'$ or $Q\cap Q'=\varnothing$;
\item[(iii)] for all $j\in\Z$ and $Q\in\DD_j$, we have $2^{-j}\lesssim\diam(Q)\leq2^{-j}$ and $\mu(Q)\approx 2^{jn}$;
\item[(iv)] if $Q\in\DD_j$, there is a point $z_Q\in Q$ (the center of $Q$) such that $\dist(z_Q,E\setminus Q)
\gtrsim 2^{-j}$.
\end{itemize}
We denote $\DD=\bigcup_{j\in\Z}\DD_j$. Given $Q\in\DD_j$, the unique cube $Q'\in\DD_{j-1}$ which contains $Q$ is called the parent of $Q$.
We say that $Q$ is a sibling of $Q'$. If $Q$ is from the generation $j$, we write $J(Q)=j$.

For $Q\in \DD_j$, we define the side length
 of $Q$ as $\ell(Q)=2^{-j}$. Notice that $\ell(Q)\lesssim\diam(Q)\leq \ell(Q)$.
Actually it may happen that a cube $Q$ belongs to $\DD_ j\cap \DD_k$ with $j\neq k$, because there may exist cubes
with only one sibling. In this case, $\ell(Q)$ is not well defined. However this problem can be solved in many ways.
For example, the reader may think that a cube is not only a subset of $E$, but a couple $(Q,j)$, where $Q$ is
a subset of $E$ and $j\in\Z$ is such that $Q\in\DD_j$.

Given $\lambda>1$, we set
$$\lambda Q:= \{x\in E: \dist(x,Q)\leq (\lambda-1)\ell(Q)\}.$$
For $R\in\DD$, we denote $\DD(R) = \{Q\in\DD:Q\subset R\}.$

Let us remark that we will not need the ``small boundaries condition'' for the dyadic cubes (see \cite{DS2}).

In this paper, the usual cubes in $\R^d$ will be called ``true cubes'', to distinguish them from the ``false cubes'' in $\DD$.


\section{Elementary properties of the coefficients $\alpha(Q)$}\label{secalfa}

In the Introduction we defined the coefficients $\alpha(Q)$ for true cubes $Q\subset\R^d$. For $Q\in\DD$, the 
definition is the same. So given $Q\subset\R^d$, which may be either a true cube or a dyadic cube from $\DD$, we set
\begin{equation}\label{defalfa}
\alpha(Q) = \frac1{\ell(Q)^{n+1}}\,\inf_{c\geq0,L} \,\dist_{B_Q}(\mu,\,c\HH^n_{|L}),
\end{equation}
where the infimum is taken over all the constants $c\geq0$ and all the $n$-planes $L$, 
and $B_Q=B(z_Q,3\ell(Q))$. Observe that $Q\subset B_Q$. We denote by $c_Q$ and $L_Q$ a constant
and an $n$-plane that minimize $\dist_{B_Q}(\mu,\,c\HH^n_L)$ (it is easy to check that this minimum is attained). 
We also write 
$\LL_Q:=c_Q\HH^n_{|L_Q}$, so that
$$\alpha(Q) = \frac1{\ell(Q)^{n+1}}\,\dist_{B_Q}(\mu,\,c_Q\HH^n_{|L_Q})
= \frac1{\ell(Q)^{n+1}}\,\dist_{B_Q}(\mu,\,\LL_Q).$$
Let us remark that $c_Q$ and $L_Q$ (and so $\LL_Q$) may be not unique. Moreover, we may (and will) assume that 
$L_Q\cap B_Q\neq\varnothing$.



For simplicity, in the lemmas below we will work with dyadic cubes $Q\in D$. However, most of the results hold also for true cubes $Q$ such that $Q\cap \supp(\mu)\neq\varnothing$ and $\ell(Q)\lesssim\diam(\supp(\mu))$.

\begin{lemma} \label{lempr0}
For all $P,Q\in\DD$, the coefficients $\alpha(\cdot)$ satisfy the following properties:
\begin{itemize}
\item[(a)] $\alpha(Q) \lesssim 1$.
\item[(b)] If $P\subset Q$ and $\ell(P)\approx\ell(Q)$, then $\alpha(P) \lesssim\alpha(Q)$.
\item[(c)] If $\alpha(Q)\leq C_4$, with $C_4$ small enough, then $L_Q\cap B(z_Q,\diam(Q))\neq\varnothing$ and 
$c_Q\approx1$.
\end{itemize}
\end{lemma}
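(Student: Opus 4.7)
The plan is to handle (a), (b), (c) by elementary test functions that exploit only AD regularity of $\mu$ and the geometry of an $n$\guio{plane} meeting a ball.

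For (a), I would take $c=0$ as competitor: any $1$-Lipschitz $f$ with $\supp f\subset B_Q$ satisfies $|f|\le\diam(B_Q)\lesssim\ell(Q)$, so $\bigl|\int f\,d\mu\bigr|\lesssim\ell(Q)\,\mu(B_Q)\lesssim\ell(Q)^{n+1}$ by AD regularity, and dividing by $\ell(Q)^{n+1}$ gives $\alpha(Q)\lesssim 1$. For (b), the key geometric fact is the inclusion $B_P\subset B_Q$ whenever $P\subsetneq Q$ in $\DD$: since $z_P\in Q$ we have $|z_P-z_Q|\le\diam(Q)\le\ell(Q)$, strict dyadic descent forces $\ell(P)\le\tfrac12\ell(Q)$, and hence
$$B_P\subset B\bigl(z_Q,\,3\ell(P)+\ell(Q)\bigr)\subset B\bigl(z_Q,\tfrac52\ell(Q)\bigr)\subset B_Q.$$
Using $\LL_Q$ as a competitor for $\alpha(P)$ and noting that any $f$ admissible for $\dist_{B_P}$ is also admissible for $\dist_{B_Q}$, one gets $\dist_{B_P}(\mu,\LL_Q)\le\ell(Q)^{n+1}\alpha(Q)$; the hypothesis $\ell(P)\approx\ell(Q)$ then yields $\alpha(P)\lesssim\alpha(Q)$.

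For (c), I would first prove $L_Q\cap B(z_Q,\diam(Q))\neq\varnothing$ by contradiction. Assuming $\dist(z_Q,L_Q)>r:=\diam(Q)$, the bump $f(x)=\max(0,\,r-|x-z_Q|)$ is $1$-Lipschitz, supported in $B(z_Q,r)\subset B_Q$, and disjoint from $L_Q$. Hence $\int f\,d\LL_Q=0$, while AD regularity at $z_Q\in E$ gives
$$\int f\,d\mu\;\ge\;\tfrac{r}{2}\,\mu\bigl(B(z_Q,r/2)\bigr)\;\gtrsim\;r^{\,n+1}\;\gtrsim\;\ell(Q)^{n+1}.$$
This forces $\alpha(Q)\gtrsim 1$ and contradicts $\alpha(Q)\le C_4$ for $C_4$ small.

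Once Step~1 is in place we know $\dist(z_Q,L_Q)\le\diam(Q)\le\ell(Q)$, so the slice $L_Q\cap B(z_Q,2\ell(Q))$ is a non\guio{degenerate} $n$\guio{disc} of radius at least $\sqrt{3}\,\ell(Q)$, of $\HH^n$\guio{measure} $\approx\ell(Q)^n$. For $c_Q\lesssim 1$, I would test with the hat $f(x)=\max(0,\,3\ell(Q)-|x-z_Q|)$: the bound $\int f\,d\mu\lesssim\ell(Q)^{n+1}$ from AD regularity, the lower bound
$$\int f\,d\LL_Q\;\ge\;c_Q\int_{L_Q\cap B(z_Q,2\ell(Q))}\ell(Q)\,d\HH^n\;\gtrsim\;c_Q\,\ell(Q)^{n+1},$$
and the defect $\bigl|\int f\,d\mu-\int f\,d\LL_Q\bigr|\le C_4\ell(Q)^{n+1}$ combine to give $c_Q\lesssim 1$. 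Symmetrically, for $c_Q\gtrsim 1$, the smaller hat $f(x)=\max(0,\,\ell(Q)-|x-z_Q|)$ satisfies $\int f\,d\mu\gtrsim\ell(Q)^{n+1}$ by AD regularity, while $\int f\,d\LL_Q\le c_Q\,\ell(Q)\,\HH^n(L_Q\cap B(z_Q,\ell(Q)))\lesssim c_Q\,\ell(Q)^{n+1}$, and the same defect bound forces $c_Q\gtrsim 1$ once $C_4$ is small enough. The only delicate point is the ordering of Steps~1 and~2 of~(c): the non\guio{degeneracy} of the slice used in Step~2 depends on the conclusion of Step~1, which is why the two parts must be proved in that order.
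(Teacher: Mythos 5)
Your proof is correct and follows essentially the same strategy as the paper's: bound $\dist_{B_Q}(\mu,\LL_Q)$ from below by testing against Lipschitz functions adapted to $Q$ (functions vanishing on $L_Q$ for the plane-proximity statement, cutoffs near $Q$ for $c_Q\approx 1$), and exploit AD regularity plus the monotonicity $\dist_{B_P}\le\dist_{B_Q}$ for (a) and (b). Your use of explicit hat functions and a contradiction argument in place of the paper's smooth bumps (e.g.\ $\vphi\,\dist(\cdot,L_Q)$) is only a cosmetic variation, and your remark that the $c_Q\lesssim 1$ bound needs the already-proved fact $L_Q\cap B(z_Q,\diam(Q))\neq\varnothing$ is exactly the right point of care.
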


In a sense, (c) says that if $\alpha(Q)$ is small enough, then $L_Q$ is quite close to $Q$ (recall that by definition
we assumed $L_Q\cap  B_Q\neq\varnothing$ and that $B(z_Q,\diam(Q)) = \frac13 B_Q$).

\begin{proof}
The statement (a) is a direct consequence of the definitions. The property (b) follows from the fact that if $P\subset Q$, then $B_P\subset B_Q$ 
(recall that $\diam(R)\leq \ell(R)$ for all $R\in \DD$) and so
$\dist_{B_P}(\mu,\nu)\leq\dist_{B_Q}(\mu,\nu)$ for any given measure $\nu$.

Let us turn our attention to (c). To show that $L_Q\cap B(z_Q,\diam(Q))\neq\varnothing$ if $C_4$ is small enough, take a smooth function function 
$\vphi$ such that $\chi_{B(z_Q,\diam(Q)/10)} \leq \vphi\leq \chi_{B(z_Q,\diam(Q)/2)}$ with 
$\|\nabla\vphi\|_\infty\lesssim 1/\diam(Q)$. Then we have $\|\nabla(\vphi\,\dist(\cdot,L_Q))\|_\infty\lesssim 1$, and since
$\vphi\,\dist(\cdot,L_Q)$ vanishes on $L_Q$, we have
$$\biggl|\int\vphi(x)\dist(x,L_Q)\,d\mu(x)\biggr| \lesssim \alpha(Q)\ell(Q)^{n+1}.$$
On the other hand, 
\begin{align*}
\int\vphi(x)\dist(x,L_Q)\,d\mu(x) &\geq \dist(\supp(\vphi),\,L_Q) \int\vphi\,d\mu \\
& \gtrsim \dist(\supp(\vphi),\,L_Q) \,\mu(Q).
\end{align*}
If $\alpha(Q)$ is small enough we infer that
$\dist(\supp(\vphi),L_Q) \leq \diam(Q)/10$, and so $L_Q\cap B(z_Q,\diam(Q))\neq\varnothing.$

Let us check now that $c_Q\approx1$.
Let $\psi$ be a smooth function such that $\chi_{Q} \leq \psi\leq \chi_{B_Q}$ 
and $\|\nabla\psi\|_\infty\lesssim 1/\ell(Q)$. Then
$$\biggl|\int\psi\,d\mu - \int\psi\,d\LL_{Q}\biggr| \lesssim\alpha(Q)\mu(Q).$$
Thus, 
$$\int \psi\,d\mu - C\alpha(Q)\mu(Q) \leq c_Q\int\psi\,d\HH^n_{|L_{Q}} \leq \int \psi\,d\mu + C\alpha(Q)\mu(Q).$$
From the second inequality, we deduce easily that $c_Q\lesssim1$. From the first one, we see that if $\alpha(Q) \leq C_4$, where $C_4$ is 
small enough, then
$$ c_Q\int\psi\,d\HH^n_{|L_{Q}}  \geq \mu(Q) - C\alpha(Q)\mu(Q) \geq \frac12 \mu(Q),$$
which implies that $c_Q\gtrsim1$.
\end{proof}

Recall the definition of the bilateral $\beta_1$:
$$b\beta_1(Q) = \frac1{\ell(Q)^n}\,\inf_L \biggl[ \int_{2Q} \frac{\dist(y,L)}{\ell(Q)}\,d\mu(y) + 
\int_{L\cap B(z_Q,2\diam(Q))}\!\! \frac{\dist(x,E)}{\ell(Q)}\,d\HH^n_{|L}(x) \biggr],$$
where the infimum is taken over all the $n$-planes $L$ and $E=\supp(\mu)$. 
We have the following relationship between $\beta_1(Q)$, $b\beta_1(Q)$ and $\alpha(Q)$:

\begin{lemma} \label{lempr1}
For all $Q\in\DD$ we have
$$\beta_1(Q)\leq b\beta_1(Q)\lesssim\alpha(Q).$$
\end{lemma}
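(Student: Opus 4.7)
The first inequality $\beta_1(Q)\le b\beta_1(Q)$ is immediate from the definitions, since $b\beta_1$ only adds a non-negative term to the expression defining~$\beta_1$. The substance of the lemma is the bound $b\beta_1(Q)\lesssim\alpha(Q)$.

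The plan is to estimate each of the two integrals in $b\beta_1(Q)$ by testing the signed measure $\mu-\LL_Q$ against a suitable Lipschitz function, using the minimizing pair $(c_Q,L_Q)$ with $\LL_Q=c_Q\HH^n_{|L_Q}$ from the definition of $\alpha(Q)$. We may assume $\alpha(Q)\le C_4$ with $C_4$ as in Lemma~\ref{lempr0}(c), since otherwise the trivial bounds $\dist(y,L_Q)\lesssim\ell(Q)$ on $2Q$ and $\dist(x,E)\lesssim\ell(Q)$ on $L_Q\cap B(z_Q,2\diam(Q))$, together with AD-regularity, give $b\beta_1(Q)\lesssim 1\lesssim\alpha(Q)$. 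Under this smallness hypothesis, Lemma~\ref{lempr0}(c) yields $c_Q\approx 1$, which will be needed below.

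For the first integral, let $\vphi_1$ be a smooth cutoff with $\chi_{2Q}\le\vphi_1\le\chi_{B_Q}$ and $\|\nabla\vphi_1\|_\infty\lesssim 1/\ell(Q)$, and set $f(x):=\vphi_1(x)\,\dist(x,L_Q)$. Since $\dist(\cdot,L_Q)$ is $1$-Lipschitz and $\dist(x,L_Q)\lesssim\ell(Q)$ on $\supp\vphi_1\subset B_Q$, the Leibniz rule gives $\mathrm{Lip}(f)\lesssim 1$, and by construction $\supp(f)\subset B_Q$. Because $f$ vanishes on $L_Q$, $\int f\,d\LL_Q=0$, so
\begin{equation*}
\int_{2Q}\dist(y,L_Q)\,d\mu(y)\le\int f\,d\mu=\int f\,d\mu-\int f\,d\LL_Q\lesssim\dist_{B_Q}(\mu,\LL_Q)=\alpha(Q)\,\ell(Q)^{n+1},
\end{equation*}
which after dividing by $\ell(Q)^{n+1}$ gives the desired bound on the first term of $b\beta_1(Q)$.

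For the second integral, choose a smooth cutoff $\vphi_2$ with $\chi_{B(z_Q,2\diam(Q))}\le\vphi_2\le\chi_{B_Q}$ and $\|\nabla\vphi_2\|_\infty\lesssim 1/\ell(Q)$, and set $g(x):=\vphi_2(x)\,\dist(x,E)$. The same Leibniz argument (using that $\dist(x,E)\lesssim\ell(Q)$ on $B_Q$, a consequence of $Q\subset E$ and $B_Q=B(z_Q,3\diam(Q))$) gives $\mathrm{Lip}(g)\lesssim 1$ and $\supp(g)\subset B_Q$. Now $g\equiv 0$ on $E=\supp(\mu)$, so $\int g\,d\mu=0$, and therefore
\begin{equation*}
c_Q\int_{L_Q\cap B(z_Q,2\diam(Q))}\dist(x,E)\,d\HH^n_{|L_Q}(x)\le\int g\,d\LL_Q=\int g\,d\LL_Q-\int g\,d\mu\lesssim\alpha(Q)\,\ell(Q)^{n+1}.
\end{equation*}
Dividing by $c_Q\approx 1$ and then by $\ell(Q)^{n+1}$ gives the required bound on the second term.

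The only potential subtlety is verifying that the product cutoffs are $O(1)$-Lipschitz, which hinges on both $\dist(\cdot,L_Q)$ and $\dist(\cdot,E)$ being $O(\ell(Q))$ on the supports of the respective bumps; the first uses that $L_Q\cap B_Q\ne\varnothing$ (built into the definition of $\alpha$), and the second uses $z_Q\in E$. With these pieces in place the two bounds combine to give $b\beta_1(Q)\lesssim\alpha(Q)$.
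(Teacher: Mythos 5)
Your proof is correct and follows essentially the same route as the paper: both arguments test $\dist_{B_Q}(\mu,\LL_Q)$ against Lipschitz cutoffs built from $\dist(\cdot,L_Q)$ and $\dist(\cdot,E)$ (each vanishing on the support of one of the two measures), reduce to the case $\alpha(Q)\le C_4$, and use $c_Q\approx 1$ from Lemma~\ref{lempr0}(c). The only cosmetic difference is that the paper packages both estimates into the single test function $[\dist(\cdot,L_Q)-\dist(\cdot,E)]\vphi$, whereas you treat the two terms of $b\beta_1(Q)$ with two separate test functions.
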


\begin{proof}
The first inequality is trivial. For the second one we may assume 
$\alpha(Q)\leq C_4$. 
Given an arbitrary $n$-plane $L$, we take
$$f(x) := \Bigl[\dist(x,L)- \dist(x,E)\Bigr]\vphi(x),$$
where $\vphi$ is a smooth function such that $\chi_{B(z_Q,2\diam(Q))} \leq \vphi\leq \chi_{B_Q}$ and $\|\nabla\vphi\|_\infty\lesssim 1/\ell(Q)$.
It is straightforward to check that $\|\nabla f\|_\infty\lesssim 1$.
As a consequence, for any $c\geq0$,
\begin{equation*}
\begin{split}
\dist_{B_Q}(\mu,\,c\HH^n_{|L}) & \gtrsim \biggl|\int f\,d\mu - c\int f\,d\HH^n_{|L}\biggr| \\
& = \biggl|\int \vphi(x)\dist(x,L)\,d\mu(x) + c\int \vphi(x)\dist(x,E)\,d\HH^n_{|L}(x)\biggr| \\
& \geq \min(1,c) b\beta_1(Q)\,\ell(Q)^{n+1}.
\end{split}
\end{equation*}
If we choose $L=L_Q$ and $c=c_Q$, we get
$$\dist_{B_Q}(\mu,\,\LL_{Q}) \gtrsim  b\beta_1(Q)\,\ell(Q)^{n+1},$$
since $c_Q\approx1$ (because $\alpha(Q)\leq C_4$),
and the lemma follows.
\end{proof}

\begin{remark} \label{rem1}
The calculations in the preceding lemma show that the following holds:
$$\frac1{\ell(Q)^n}\int_{B(z_Q,2\diam(Q))} \frac{\dist(y,L_Q)}{\ell(Q)}\,d\mu(y)  \lesssim \alpha(Q).$$
\end{remark}

\begin{lemma}\label{lempr2}
Let $P,Q\in\DD$ be dyadic cubes such that $P\subset Q$, with
$\eta\ell(Q)\leq \ell(P)\leq \ell(Q)$ for some fixed $\eta>0$. Then we have
\begin{equation}\label{dh1}
\dist_H\bigl(L_P\cap B_Q , \, L_Q\cap B_Q) \leq C(\eta)\alpha(Q)\ell(Q),
\end{equation}
where $\dist_H$ stands for the Hausdorff distance. Also,
\begin{equation} \label{dh2}
|c_P - c_Q| \leq C(\eta)\alpha(Q).
\end{equation}
\end{lemma}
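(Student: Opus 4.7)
The plan is to first reduce matters to the regime where $\alpha(Q)$ is so small that Lemma~\ref{lempr0}(c) applies to both $P$ and $Q$, and then to deduce both conclusions from a single inequality that compares $\LL_P$ and $\LL_Q$ directly. Since $\dist_H(L_P\cap B_Q,L_Q\cap B_Q)\lesssim \ell(Q)$ and, by Lemma~\ref{lempr0}(a), $c_P,c_Q\lesssim 1$, the case $\alpha(Q)\geq C_4$ is trivial. When $\alpha(Q)<C_4$, Lemma~\ref{lempr0}(b) and the hypothesis $\ell(P)\geq\eta\ell(Q)$ imply that $\alpha(P)\lesssim_\eta \alpha(Q)$, so taking $C_4$ smaller if necessary we may assume $\alpha(P)\leq C_4$ as well; hence $c_P\approx c_Q\approx 1$ and each of $L_P,L_Q$ meets $B(z_P,\diam(P))\subset B_Q$.

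The main inequality comes from the triangle inequality applied on a ball contained in both $B_P$ and $B_Q$ (which exists because $\ell(P)\approx_\eta\ell(Q)$ and $z_P\in Q$): for every Lipschitz $f$ with $\mathrm{Lip}(f)\leq 1$ supported in that common ball,
\[
\Bigl|{\textstyle\int f\,d\LL_P-\int f\,d\LL_Q}\Bigr|\leq \Bigl|{\textstyle\int f\,d\LL_P-\int f\,d\mu}\Bigr|+\Bigl|{\textstyle\int f\,d\mu-\int f\,d\LL_Q}\Bigr|\lesssim_\eta \alpha(Q)\,\ell(Q)^{n+1}.
\]
Thus $\LL_P$ and $\LL_Q$ are $C(\eta)\alpha(Q)\ell(Q)^{n+1}$-close on a ball of radius comparable to $\ell(Q)$.

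To obtain \eqref{dh1}, pick a point $y_0\in L_P$ provided by Lemma~\ref{lempr0}(c) and a non-negative bump $\vphi$ satisfying $\chi_{B(y_0,\rho)}\leq \vphi\leq \chi_{B(y_0,2\rho)}$ with $\|\nabla\vphi\|_\infty\lesssim 1/\rho$, for $\rho$ a small fixed multiple of $\ell(P)$. Applied to $f(x)=\vphi(x)\dist(x,L_Q)$ (which has $\|\nabla f\|_\infty\lesssim 1$ and vanishes on $L_Q$), the inequality above yields
\[
c_P\int_{L_P}\vphi(x)\,\dist(x,L_Q)\,d\HH^n(x)=\int f\,d\LL_P-\int f\,d\LL_Q\lesssim_\eta \alpha(Q)\,\ell(Q)^{n+1}.
\]
Since $c_P\approx 1$ and $\int\vphi\,d\HH^n_{|L_P}\gtrsim \rho^n\approx_\eta \ell(Q)^n$, the $\HH^n_{|L_P}$-average of $\dist(\cdot,L_Q)$ over a ball of radius $\rho$ around $y_0$ in $L_P$ is $\lesssim_\eta \alpha(Q)\ell(Q)$. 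The function $x\mapsto \dist(x,L_Q)$ restricted to $L_P$ is the norm of an affine map between Euclidean spaces, so this $L^1$-bound on a ball of radius $\approx \ell(Q)$ forces the corresponding affine map to have both translation part and linear part of order $\alpha(Q)\ell(Q)$ and $\alpha(Q)$, respectively; from here a standard extrapolation from $B_P$ to $B_Q$ costs only a factor $C(\eta)$ and gives \eqref{dh1}. The delicate step here is precisely this passage from an integral (averaged) bound on one plane to a pointwise Hausdorff bound on a larger ball; it is the main geometric obstacle, though it reduces to elementary linear algebra about the map $\pi_{L_Q^{\perp}}|_{L_P}$.

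For \eqref{dh2}, apply the main inequality to a smooth function $\psi$ with $\chi_{B(z_P,\ell(P))}\leq \psi\leq \chi_{B(z_P,2\ell(P))}$ and $\|\nabla\psi\|_\infty\lesssim 1/\ell(P)$, obtaining
\[
\Bigl|c_P{\textstyle\int \psi\,d\HH^n_{|L_P}}-c_Q{\textstyle\int\psi\,d\HH^n_{|L_Q}}\Bigr|\lesssim_\eta \alpha(Q)\,\ell(Q)^{n}.
\]
Denoting $A_P$ and $A_Q$ the two integrals, both are $\approx_\eta \ell(Q)^n$; moreover, \eqref{dh1} just proved implies $|A_P-A_Q|\lesssim_\eta \alpha(Q)\ell(Q)^n$ because $\psi$ is Lipschitz with constant $1/\ell(P)$ and the planes differ by at most $C(\eta)\alpha(Q)\ell(Q)$ in Hausdorff distance on the support of $\psi$. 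Writing $c_P A_P-c_Q A_Q=c_P(A_P-A_Q)+(c_P-c_Q)A_Q$ and solving for $c_P-c_Q$ then yields \eqref{dh2}.
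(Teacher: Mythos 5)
Your argument is correct and takes essentially the same route as the paper's proof: compare $\LL_P$ and $\LL_Q$ through $\mu$ using Lipschitz test functions, test against $\vphi\,\dist(\cdot,L_Q)$ (which annihilates $\LL_Q$) to get an averaged bound on $\dist(\cdot,L_Q)$ along $L_P$ and hence \eqref{dh1}, then test against a bump and use \eqref{dh1} to isolate $c_P-c_Q$ for \eqref{dh2}. The only cosmetic difference is your bump centered at $z_P$ with inner radius $\ell(P)$ (the paper takes inner radius $2\diam(P)$, which makes the lower bound $\int\vphi_P\,d\HH^n_{|L_P}\gtrsim\ell(P)^n$ immediate, whereas yours needs the extra remark that $\dist(z_P,L_P)\ll\ell(P)$ when $\alpha(P)$ is small); otherwise both proofs leave the same elementary linear-algebra steps implicit.
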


\begin{proof}
All the constants in this proof (including the ones involved in the relationship ``$\lesssim$'') are allowed to
depend on $\eta$.

Clearly, we may assume that $\alpha(Q),\alpha(P) \leq C_4$. Otherwise, the statements in the lemma are trivial. First we prove \rf{dh1}.
Let $\vphi_P$ be a smooth function such that $\chi_{B(z_P,2\diam(P))} \leq \vphi_P\leq \chi_{B_P}$ and $\|\nabla\vphi_P\|_\infty\lesssim 1/\ell(P)$.
Then we have $\|\nabla(\vphi_P\, \dist(\cdot,L_Q))\|_\infty \lesssim 1$, and so
$$\biggl|\int\! \vphi_P(x)\dist(x,L_Q)\,d\mu(x) -\int\! \vphi_P(x)\dist(x,L_Q)\,d\LL_{P}(x) \biggr| 
\lesssim \alpha(P)\ell(P)^{n+1}.$$
Moreover, since $\vphi_P \dist(\cdot,L_Q)$ vanishes on $L_Q$, we have
$$\biggl|\int \vphi_P(x)\dist(x,L_Q)\,d\mu(x) \biggr| \lesssim \alpha(Q)\ell(Q)^{n+1}.$$
Taking into account that $\alpha(P)\lesssim\alpha(Q)$, we get
$$\int_{B(z_P,2\diam(P))}\dist(x,L_Q)\,d\LL_{P}(x) \lesssim \alpha(Q)\ell(Q)^{n+1}.$$
Using that $L_P\cap B(z_P,\diam(P))\neq\varnothing$, it can be shown that the above estimate implies \rf{dh1}.

To prove \rf{dh2}, take $\vphi_P$ as above. We have
$$\biggl|\int \vphi_P\,d\mu - c_P\int \vphi_P\,d\HH^n_{|L_P} \biggr| \lesssim \alpha(P)\mu(P)\lesssim \alpha(Q)\mu(Q).$$
Also,
$$\biggl|\int \vphi_P\,d\mu - c_Q\int \vphi_P\,d\HH^n_{|L_Q} \biggr| \lesssim \alpha(Q)\mu(Q).$$
Thus,
$$\biggl|c_P\int \vphi_P\,d\HH^n_{|L_P} - c_Q\int \vphi_P\,d\HH^n_{|L_Q} \biggr| \lesssim \alpha(Q)\mu(Q).$$
Then we have
\begin{equation}\label{dh3}
|c_Q-c_P| \int\vphi_P\,d\HH^n_{|L_P} \lesssim \alpha(Q)\mu(Q) + c_Q\biggl|\int \vphi_P\,d\HH^n_{|L_P} - \int \vphi_P\,d\HH^n_{|L_Q} \biggr|.
\end{equation}
Using \rf{dh1}, it can be shown that
$$\biggl|\int \vphi_P\,d\HH^n_{|L_P} - \int \vphi_P\,d\HH^n_{|L_Q} \biggr|\lesssim\alpha(Q)\mu(Q).$$
Since $c_Q\lesssim1$ and $\int\vphi_P\,d\HH^n_{|L_P} \approx\mu(Q)$, \rf{dh2} follows from \rf{dh3}.
\end{proof}


\section{The coefficients $\alpha$ on Lipschitz graphs and uniformly rectifiable sets}\label{secrectif}

To prove Theorem \ref{teolips} (and only for this theorem), for convenience we prefer to work with the family of the true dyadic cubes of $\R^d$, which we denote by $\DD_{\R^d}$.
Although, for a cube~$Q$ of this type the estimate $\mu(Q)\approx\ell(Q)^n$ may fail, for the cubes $Q\in
\DD_{\R^d}$ which intersect $\supp(\mu)$ we have $\mu(2Q)\approx\ell(Q)^n$. Recall that if $Q$ does not intersect $\supp(\mu)$, for convenience we set $\alpha(Q)=0$.

Given a true cube $Q$ with sides parallel to the axes, we denote by $I_Q$ the projection of~$Q$ onto the $n$ plane $\{(x_1,\ldots,x_d)\in\R^d:
x_{n+1}=\cdots=x_d=0\}$. Observe that $I_Q$ is an $n$-dimensional true cube.


\begin{proof}[\bf Proof of Theorem \ref{teolips}]
For $x\in\R^n$, we denote $\wt{A}(x):=(x,A(x))$, and we set
$$g(x) := \rho(\wt{A}(x))\,|J(\wt{A})(x)|,\qquad \mbox{for $x\in\R^n$},$$
where $J(\wt{A})(x)$ stands for the $n$-dimensional Jacobian of the map $x\mapsto \wt{A}(x)$.
Given a\linebreak cube~$Q\in\DD_{\R^d}$ which intersects $\supp(\mu)$, let $\wh{Q}$ be a cube with side 
length $16\ell(Q)$ such that $Q\in\DD_{\R^d}(\wh{Q})$ and $B_Q\subset\wh{Q}$. We will show that
\begin{equation} \label{sh1}
\alpha(Q) \lesssim \beta_1(2Q) + \sum_{I\in\DD_{\R^n}(I_\wh{Q})}\frac{\ell(I)^{1+n/2}}{\ell(Q)^{1+n}}\,\|\D_{I} g\|_2,
\end{equation}
where $\DD_{\R^n}(I_\wh{Q})$ stands for the collection of dyadic cubes from $\DD_{\R^n}$ which are contained in $I_\wh{Q}$, the projection of
$\wh{Q}$ onto the $n$-plane $\{x\in\R^d:\,x_{n+1}=\cdots=x_d=0\}$.
Notice that in this inequality $Q$ is a cube in $\R^d$ and the $I$'s are cubes in $\R^n$. Also, the $L^2$~norm
is taken with respect to the $n$-dimensional Lebesgue measure on $\R^n$.
Let us see that the theorem follows from the preceding estimate. Indeed, we derive
\begin{align*}
\sum_{Q\in \DD_{\R^d}(R)}\alpha(Q)^2\,\mu(Q) &\lesssim  \sum_{Q\in \DD_{\R^d}(R)}\beta_1(2Q)^2\mu(Q) \\
 & \quad+
\sum_{Q\in\DD_{\R^d}(R)}\biggl(\sum_{I\in\DD_{\R^n}(I_\wh{Q})}\frac{\ell(I)^{1+n/2}}{\ell(Q)}\,\|\D_{I} g\|_2\biggr)^2
 \frac1{\mu(Q)}.
\end{align*}
The first sum on the right hand side is bounded above by $C\ell(R)^n$, by the results of David and Semmes.
To deal with the last sum, which we denote by $S_2$, we apply Cauchy-Schwartz:
$$S_2\leq 
\sum_{Q\in\DD_{\R^d}(R)}\biggl(\sum_{I\in\DD_{\R^n}(I_\wh{Q})}\frac{\ell(I)}{\ell(Q)}\,\|\D_{I} g\|_2^2\biggr)
\biggl(\sum_{I\in\DD_{\R^n}(I_\wh{Q})}\frac{\ell(I)}{\ell(Q)}\,\ell(I)^n\biggr)\frac1{\mu(Q)}.$$
Since $\sum_{I\in\DD_{\R^n}(I_\wh{Q})}\frac{\ell(I)}{\ell(Q)}\,\ell(I)^n\lesssim\mu(Q)$, we get
\begin{align*} S_2 &\lesssim 
\sum_{Q\in\DD_{\R^d}(R)}\sum_{I\in\DD_{\R^n}(I_\wh{Q})}\frac{\ell(I)}{\ell(Q)}\,\|\D_{I} g\|_2^2 \\ &\leq 
\sum_{I\in\DD_{\R^n}(I_\wh{R})} \|\D_{I} g\|_2^2 \sum_{Q\in\DD_{\R^d}(R):I_{\wh{Q}}\supset I}\frac{\ell(I)}{\ell(Q)}
\lesssim \sum_{I\in\DD_{\R^n}(I_\wh{R})} \|\D_{I} g\|_2^2.
\end{align*}
Since $g$ is a bounded function, we deduce that $S_2\lesssim\ell(R)^n$.

It remains to show that \rf{sh1} holds. For any $x\in I_{4Q}\subset I_{\wh{Q}}$, we have
$$g(x) = \sum_{I\in\DD_{\R^n}(I_{\wh{Q}})}\Delta_I g(x) + g_{I_{\wh{Q}}}.$$
To estimate $\alpha(Q)$, we choose an $n$-plane $L=\{(x,y)\in \R^n\times \R^{d-n}:\,y=a(x)\}$ (where $a(x)$ is an
appropriate affine map) which minimizes $\beta_1(3Q)$ and we set 
$c_Q:=  g_{I_{\wh{Q}}}/|J(\wt{a})|$ (notice that the Jacobian of the map $x\mapsto \wt{a}(x):=(x,a(x))$
is constant and bounded from below). 
Given a Lipschitz function $f$ supported in $B_Q$ with Lipschitz constant $\leq1$,
we have
\begin{align*}
 &\quad\; \biggl| \int f(z)\,d\mu(z) - c_Q\int f(z)\,d\HH^n_{|L}\biggr| \\ &= 
\biggl|\int f(\wt{A}(x))\,g(x) dx - c_Q\int f(\wt{a}(x)) |J(\wt{a})| \,dx\biggr| \\
&\leq  \biggl|\int f(\wt{A}(x))\,g_{I_{\wh{Q}}} dx - \!\int f(\wt{a}(x)) g_{I_{\wh{Q}}} \,dx\biggr| + \!\!\!
 \sum_{I\in\DD_{\R^n}(I_{\wh{Q}})}\biggl| \int f(\wt{A}(x)) \Delta_Ig(x)  \,dx\biggr|\\
& =: T_1 + T_2.
\end{align*}
To deal with $T_1$ we take into account that $g_{I_{\wh{Q}}}$ is bounded (since
$g$ is a bounded function) and that $f$ is $1$-Lipschitz and $\supp(f)\subset B_Q\subset 6Q$:
$$T_1 \lesssim \int |f(\wt{A}(x)) - f(\wt{a}(x))| \,dx \leq
\int_{6I_Q} |A(x)-a(x)| \,dx \lesssim \beta_1(2Q)\ell(Q)^{1+n}.$$
For $T_2$ we use the fact that $\Delta_I g$ has mean value zero and $f$ and $\wt{A}$ are Lipschitz maps:
\begin{align*}
T_2 &=  \sum_{I\in\DD_{\R^n}(I_{\wh{Q}})}\biggl| \int_{I} (f(\wt{A}(x))- f(\wt{A}(x_I)) \Delta_Ig(x)  \,dx\biggr|\\
&\lesssim \sum_{I\in\DD_{\R^n}(I_{\wh{Q}})} \ell(I)\|\Delta_I g\|_1 \leq 
\sum_{I\in\DD_{\R^n}(I_{\wh{Q}})} \ell(I)^{1+n/2}\|\Delta_I g\|_2.
\end{align*}
From the estimates of $T_1$ and $T_2$ and the definition of $\alpha(Q)$, we get \rf{sh1}.
\end{proof}

\begin{remark}
Let $\Gamma$ be an 
$n$-dimensional Lipschitz graph with compact support in $\R^{n+1}$. That is, 
$\Gamma:=\{(x,y)\in \R^n\times\R:\,y=A(x)\}$. Set $\mu= \HH^n_{|\Gamma}$.
By the calculations in the preceding theorem, we have
$$\sum_{Q\in \DD_{\R^{n+1}}}\alpha(Q)^2\,\mu(Q) \lesssim  \sum_{Q\in \DD_{\R^n}}\beta_1(2Q)^2\mu(Q) + \sum_{I\in\DD_{\R^n}}\|\Delta_I g\|_2^2,$$
where 
$$g(x) = |J(\wt{A})(x)| = (1+|\nabla A(x)|^2)^{1/2}.$$
Notice that 
$$|g(x)-1| = \frac{|\nabla A(x)|^2}{1+(1+|\nabla A(x)|^2)^{1/2}}\leq\min(|\nabla A(x)|,\,|\nabla A(x)|^2).$$
So we have
$$\sum_{I\in\DD_{\R^n}}\|\Delta_I g\|_2^2\leq C \int_{\R^n}  |g(x) -1|^2 dx \leq \|\nabla A\|_2^2.$$
On the other hand, by \cite[Theorem 6]{Dorronsoro}, we also have
$$\sum_{Q\in \DD_{\R^n}}\beta_1(2Q)^2\mu(Q)\approx \sum_{Q\in \DD_{\R^n}}\beta_1(Q)^2\mu(Q)\approx \|\nabla A\|_2^2.$$
Recalling that $\beta_1(Q)\lesssim \alpha(Q)$ for any cube $Q$, we get
$$\sum_{Q\in \DD_{\R^{n+1}}}\alpha(Q)^2\,\mu(Q) \approx \|\nabla A\|_2^2,$$
with constants depending on $C_1$ in Theorem \ref{teolips}.
\end{remark}

\vvv

\begin{proof}[\bf Proof of Theorem \ref{teounif}]
It is clear that (b) $\Rightarrow$ (c). On the other hand,
we have shown in Lemma \ref{lempr1} that $b\beta_1(Q)\lesssim \alpha(Q)$ for any cube $Q\in\DD$.
Thus, if the coefficients $\alpha$ satisfy the packing condition
$$
\sum_{Q\in \DD(R)}\alpha(Q)^2\,\mu(Q) \lesssim \mu(R) \qquad \mbox{for all $R\in\DD$},
$$
then an analogous inequality holds if we replace $\alpha$ by $b\beta_1$ or by $\beta_1$. As a consequence,
$\mu$ is uniformly rectifiable in this case, by the results of David and Semmes in \cite{DS1}. Thus, (b)  $\Rightarrow$ (a) in Theorem \ref{teounif}.

Analogously, if (c) hods, then for all $\ve>0$, there exists some constant $C(\ve)$ such that the collection ${\mathcal B}_\ve'$ of those cubes $Q\in\DD$ such that $b\beta_1(Q)>\ve$ satisfies
$$\sum_{Q\in \mathcal{B}_\ve': Q\subset R} \mu(Q)\leq C(\ve)\mu(R)$$
for any cube $R\in\DD$. As a consequence, $E=\supp(\mu)$ satisfies the so called bilateral weak geometric lemma and by \cite[Theorem 2.4]{DS2}
$\mu$ is uniformly rectifiable. That is, (c) $\Rightarrow$ (a).

The proof that (a) $\Rightarrow$ (b) is more technical. We give only some hints: this follows by using the fact that uniformly 
rectifiable
sets admit corona decompositions (see \cite{DS1} or \cite{DS2} for the precise definition). Then the arguments are similar to the ones
in \cite[Section 15]{DS1}, where it is shown that the existence of a corona decomposition implies that the $\beta_1(Q)$'s satisfy a 
packing condition like the one in \rf{pack0}. The idea consists of constructing a partition of $\DD$ into sets (that we call trees) such
that on each tree $\mu$ is well approximated in some precise sense by $n$-dimensional Hausdorff measure on a Lipschitz graph. Then one
uses the fact that the $\alpha$'s satisfy a Carleson packing condition on Lipschitz graphs (because of Theorem \ref{teolips}), and one argues by approximation on each tree.
\end{proof}


\section{Estimates for Calder\'on-Zygmund operators in terms of the coefficients~$\alpha$}\label{secz1}

\subsection{The \boldmath$L^2(\mu)$ boundedness of \boldmath$T_\mu$}\label{sub1}

When $\mu$ is uniformly rectifiable, the fact that $T_\mu$ is bounded in $L^2(\mu)$ is a consequence of inequality \rf{pral11} and the $T(1)$ theorem. Indeed, suppose first that $\mu$ is
supported on a Lipschitz graph. Then for any dyadic cube $R$,
$$\sum_{Q\in\DD(R)} \alpha(Q)^2\mu(Q)\lesssim \mu(R).$$
Thus if we apply inequality \rf{pral11} to $\mu_{|R}$ (which is itself $AD$ regular), then we deduce that
$\|T_*(\chi_R \mu)\|_{L^2(\mu)}\lesssim \mu(R)^{1/2}$, and so $T_\mu$ is bounded in $L^2(\mu)$ by the $T(1)$ theorem.

If $\mu$ is uniformly rectifiable but not supported on a Lipschitz graph, then $T_\mu$ is also bounded in $L^2(\mu)$ because of the ``big pieces
functor'' (see Proposition I.1.28 of \cite{DS2}). An alternative argument consists of using Theorem \ref{teounif}, which implies that the 
coefficients $\alpha(Q)$ satisfy the packing condition above, and then the same proof given for $\mu$ supported on a Lipschitz graph works in
this case.

Section \ref{sec6} and the rest of the present section are devoted to the proof of inequalities \rf{pral11} and
\rf{pral110} in Theorem \ref{teomain}.


\subsection{Decomposition of \boldmath$T\mu$ with respect to \boldmath$\DD$}
\label{subdec}
As explained in the Introduction, to prove \rf{pral11} and \rf{pral110} we will
decompose~$T\mu$ using the dyadic lattice $\DD$ associated to $\mu$.
Let $\psi$ be a non-increasing radial $\CC^\infty$ function such
that $\chi_{B(0,1/2)}\leq \psi\leq \chi_{B(0,2)}$. For each
$j\in\Z$, set $\psi_j(z) := \psi(2^jz)$ and 
$\vphi_j:=\psi_{j+3}-\psi_{j+4}$ (recall that in the Introduction we set 
$\vphi_j:=\psi_{j}-\psi_{j+1}$; for simplicity in some calculations below, we prefer the
choice $\vphi_j:=\psi_{j+3}-\psi_{j+4}$),  so
 that each function $\vphi_j$ is
non negative and supported in $A(0, 2^{-j-5}, 2^{-j-2}))$, and moreover we have
$$\sum_{j\in\Z}\vphi_j(x) = 1$$
for any $x\in\R^d\setminus \{0\}$. For
each $j\in Z$ we denote $K_j(x) = \vphi_j(x)\,K(x)$ and
\begin{equation}\label{deftn}
T_j \mu(x) = \int K_j(x-y)\,d\mu(y).
\end{equation}
For each $Q\in\DD$, we set
$$T_Q \mu := \chi_Q T_{J(Q)}\mu.$$
Recall that $J(Q)$ stands for the integer such that $Q\in\DD_j$. Formally we have
$$
 T\mu = \sum_{m\in\Z} T_m\mu \,=\, \sum_{m\in\Z}\,\,
\sum_{Q\in\DD_m} T_Q\mu.$$
This decomposition of $T\mu$ is inspired in part by \cite{Semmes}. See also \cite{Tolsa-pubmat2}, \cite{MT} 
and \cite{Tolsa-bilip} for some
related techniques.

Let us denote
$$T_{(m)} \mu = \sum_{j:j\leq m} T_j\mu.$$
To prove the estimates \rf{pral11} and \rf{pral110} in Theorem \ref{teomain}, we will show that
\begin{equation}\label{pral12}
\|T_{(m)}\mu\|_{L^2(\mu)}^2\lesssim \sum_{Q\in\DD} \alpha(Q)^2\mu(Q),
\end{equation}
uniformly on $m\in\Z$. By the following ``Cotlar type'' inequality:
\begin{equation}\label{eqcotlar}
\|T_*\mu\|_{L^2(\mu)} \lesssim \limsup_{m\to\infty} \|T_{(m)}\mu\|_{L^2(\mu)} + \mu(\C), 
\end{equation}
this implies \rf{pral11}. 
The proof of \rf{eqcotlar} follows by arguments analogous to the ones of the usual Cotlar inequality although, in our particular case, it is not necessary to use the $L^2(\mu)$ boundedness of $T_\mu$ (which we are not assuming) to prove it,
because of the antisymmetry of $T_\mu$. See for instance \cite[Lemma 5.1]{Volberg} for a similar estimate.

The existence of $\pv T\mu(x)$ for $\mu$-a.e. $x$ under the assumption
$$
\sum_{Q\in \DD}\alpha(Q)^2 \mu(Q)<\infty
$$
is a consequence of the fact that this implies that $\supp(\mu)$ must be $n$-rectifiable 
(although perhaps not uniformly rectifiable), and so $\mu$
is supported on a countable union of $n$-dimensional Lipschitz graphs. Since $T_{\HH^n|\Gamma}$ is
bounded on $L^2(\HH^n_{|\Gamma})$ for any $n$-dimensional Lipschitz graph~$\Gamma$ 
(by Subsection \ref{sub1}), 
it follows that $\pv T\mu(x)$ exists $\mu$-a.e. The arguments are similar to the ones in 
\cite[Chapter 20]{Mattila-llibre}. On the other hand, $T_{(m)}\mu(x)$ can be written as a convex combination of~$T_\ve\mu(x)$,
with $0<\ve\leq 2^{-m-3}$. Indeed, if we set $g(|x-y|)= 1- \psi_{m+4}(x-y)$, we have 
$$T_{(m)}\mu(x)  = \sum_{j:j\leq m}T_j\mu(x) = \int g(|x-y|)\,K(x-y)\,d\mu(y),$$
and then by Fubini it is easy to check that
$$\int g(|x-y|)\,K(x-y)\,d\mu(y) = \int_0^\infty g'(\ve)\,T_\ve\mu(x)\,d\ve=
\int_0^{2^{-m-3}} g'(\ve)\,T_\ve\mu(x)\,d\ve,$$
since $g(\ve)= 1$ for $\ve\geq2^{-m-3}$. Then,
 it turns out that whenever $\pv T\mu(x)$ exists we have
$$\lim_{m\to\infty}T_{(m)}\mu(x) = \pv T\mu(x).$$
Since $T_* \mu\in L^2(\mu)$, by dominated convergence we derive
$$\|\pv T\mu\|_{L^2(\mu)}^2 = \lim_{m\to\infty} \|T_{(m)}\mu\|_{L^2(\mu)}^2 \lesssim
 \sum_{Q\in\DD}\alpha(Q)^2\mu(Q).$$

So it only remains to prove \rf{pral12}. To this end we set
$$\|T_{(m)}\mu\|_{L^2(\mu)}^2 = \sum_{j:j\leq m} \|T_j\mu\|_{L^2(\mu)}^2 + 2\sum_{j,k:j<k\leq m} \langle T_j\mu,T_k\mu\rangle.$$
We will see that the first sum on the right side is easy to estimate, while the last sum will require some harder 
work. To estimate this  last sum we will show that the oddness of $K(\cdot)$ introduces
some quasiorthogonality among the different functions $T_j\mu$, $j\in\Z$. 


\subsection{Estimates for \boldmath$T_Q \mu$ in terms of the coefficients \boldmath$\alpha$}

Given $Q\in\DD$, we denote $${\mathcal A}^2(Q):=
\frac1{\mu(Q)}\sum_{P\in\DD(Q)} \alpha(P)^2\,\frac{\ell(P)}{\ell(Q)}\,\mu(P).$$

\begin{lemma}\label{lempr3}
Given $Q\in\DD_m$, we have
\begin{itemize}
\item[(a)] For any $x\in L_Q\cap B(z_Q,2\diam(Q))$, $|T_m\mu(x)|\lesssim\alpha(Q)$.
\item[(b)] $\ds \int |T_Q\mu|\,d\mu \lesssim\alpha(Q)\mu(Q)$.
\item[(c)] $\ds \int |T_Q\mu|^2\,d\mu \lesssim {\mathcal A}^2(Q)\mu(Q).$
\end{itemize}
\end{lemma}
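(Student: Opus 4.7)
The plan is to base all three parts on the single comparison identity
$T_m \mu(x) = \bigl(T_m \mu(x) - T_m \LL_Q(x)\bigr) + T_m \LL_Q(x)$,
where $\LL_Q = c_Q \HH^n_{|L_Q}$ is the flat measure minimizing $\alpha(Q)$. The first summand will be controlled by $\alpha(Q)$ through the definition of $\dist_{B_Q}$: the test function $y \mapsto K_m(x-y)$ has $\|\nabla_y K_m\|_\infty \lesssim 2^{m(n+1)} = \ell(Q)^{-(n+1)}$ (from \rf{satis0} and the cutoff $\vphi_m$), and is supported in $B(x, 2^{-m-2})$, which lies inside $B_Q$ whenever $x$ stays within $2\diam(Q)$ of $z_Q$. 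After rescaling by $\ell(Q)^{n+1}$ to get a $1$-Lipschitz function, this yields $|T_m \mu(x) - T_m \LL_Q(x)| \lesssim \alpha(Q)$ for all such $x$.

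For part (a) this is essentially everything: for $x \in L_Q$, the oddness of $K$ (hence of $K_m$, since $\vphi_m$ is radial) combined with the symmetry of $\HH^n_{|L_Q}$ under the point reflection $y \mapsto 2x - y$ forces $T_m \LL_Q(x) = 0$. For part (b), I would integrate the decomposition over $Q$, obtaining $\int_Q |T_m \mu| d\mu \lesssim \alpha(Q)\mu(Q) + \int_Q |T_m \LL_Q| d\mu$. Since $T_m \LL_Q$ vanishes on $L_Q$ and has Lipschitz constant at most $\|\nabla T_m \LL_Q\|_\infty \lesssim \ell(Q)^{-(n+1)} \cdot \ell(Q)^n = \ell(Q)^{-1}$ (by integrating $|\nabla K_m|$ over $L_Q$), we get $|T_m \LL_Q(x)| \lesssim \dist(x, L_Q)/\ell(Q)$, and the remaining integral is then bounded by $\alpha(Q) \mu(Q)$ directly by Remark \ref{rem1}.

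Part (c) starts from the pointwise bound $|T_m \mu(x)| \lesssim \alpha(Q) + \dist(x, L_Q)/\ell(Q)$ just obtained, whose square contributes $\alpha(Q)^2 \mu(Q)$ (which is absorbed into the $P = Q$ term of ${\mathcal A}^2(Q)\mu(Q)$) plus a remainder $\ell(Q)^{-2} \int_Q \dist(x, L_Q)^2 d\mu$. To handle the latter, I would telescope along the dyadic chain $Q = P_0(x) \supset P_1(x) \supset \cdots$ of descendants containing a given $x$: Lemma \ref{lempr2} applied to consecutive generations gives $\dist(x, L_{P_{k-1}}) \leq \dist(x, L_{P_k}) + C \alpha(P_{k-1}) \ell(P_{k-1})$, and since $\dist(x, L_{P_k}) \lesssim \ell(P_k) \to 0$ (because $L_{P_k} \cap B_{P_k} \neq \varnothing$ and $x \in P_k \subset B_{P_k}$), this sums to $\dist(x, L_Q) \lesssim \sum_{k \geq 0} \alpha(P_k(x)) \ell(P_k(x))$. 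Cauchy--Schwarz against $\sum_k \ell(P_k(x)) \approx \ell(Q)$ then gives $\dist(x, L_Q)^2 \lesssim \ell(Q) \sum_k \alpha(P_k(x))^2 \ell(P_k(x))$; integrating in $x$ and swapping sum and integral regroups the $P_k(x)$'s into $\sum_{P \in \DD(Q)} \alpha(P)^2 \ell(P) \mu(P) = \ell(Q)^2 \mu(Q) {\mathcal A}^2(Q)$, as needed.

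I expect the main obstacle to be the telescoping step in (c): ensuring that the closest point to $x$ on $L_{P_k}$ actually lies in $B_{P_{k-1}}$ so that the Hausdorff estimate of Lemma \ref{lempr2} applies, and that the error term $\dist(x, L_{P_k})$ really tends to $0$ along the chain. Both should follow from the crude size bound $\dist(x, L_{P_k}) \lesssim \diam(P_k)$, but some bookkeeping will be required to reconcile the constants relating $B_{P_{k-1}}$, the support of $K_m$, and the hypothesis $\alpha(P_k) \leq C_4$ that permits invoking Lemma \ref{lempr2}; the case where this smallness fails is harmless, since then the relevant cube already pays its own $\alpha(P_k)^2$ at a size comparable to $1$ against $\ell(P_k) \mu(P_k)$.
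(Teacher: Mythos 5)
Your proof is correct and follows the paper's overall scheme: part (a) is exactly the paper's argument (compare with $\LL_Q$ and use the oddness of $K_m$), and (b)--(c) rest on the pointwise bound $|T_m\mu(x)|\lesssim\alpha(Q)+\dist(x,L_Q)/\ell(Q)$, with Remark \ref{rem1} finishing (b) and the estimate $\dist(x,L_Q)\lesssim\sum_{P\in\DD:\,x\in P\subset Q}\alpha(P)\ell(P)$ plus Cauchy--Schwarz finishing (c), which is precisely the paper's Lemma \ref{lemdif1}. The one real difference is how you prove that distance estimate: the paper runs a stopping-time argument (stopping when $\alpha(Q_N)\geq\ve_0$), skips generations by a factor $n_0$, and tracks a drifting sequence of successively projected points $a_m$, showing by backward induction that they stay in $B_{Q_m}$; you instead measure everything from the fixed point $x$ and apply Lemma \ref{lempr2} generation by generation, using the crude bound $\dist(x,L_P)\leq 4\ell(P)$ (valid since $L_P\cap B_P\neq\varnothing$ and $x\in P$) to place the nearest point of $L_{P_k}$ inside $B_{P_{k-1}}$. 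This is legitimate and simpler: no smallness of the $\alpha$'s is needed to invoke Lemma \ref{lempr2}, because its conclusion is trivially true (the Hausdorff distance inside $B_Q$ is at most $\diam(B_Q)\lesssim\ell(Q)$) when $\alpha(Q)\gtrsim 1$ --- which is exactly the absorption your last paragraph gestures at. Two bookkeeping points: in (b) your bound $\|\nabla T_m\LL_Q\|_\infty\lesssim\ell(Q)^{-1}$ uses $c_Q\lesssim1$, so you should first dispose of the case $\alpha(Q)>C_4$ (trivial, since $|T_m\mu|\lesssim1$ pointwise by AD regularity; the paper avoids this by comparing $T_m\mu(x)$ with $T_m\mu(x')$ at the projected point and invoking (a)); and the identity at the end of (c) should read $\sum_{P\in\DD(Q)}\alpha(P)^2\ell(P)\mu(P)=\ell(Q)\,\mu(Q)\,{\mathcal A}^2(Q)$ (one factor of $\ell(Q)$, not two), which combined with the $\ell(Q)$ prefactor from Cauchy--Schwarz yields exactly the required $\ell(Q)^{-2}\int_Q\dist(x,L_Q)^2\,d\mu\lesssim{\mathcal A}^2(Q)\mu(Q)$.
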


\begin{proof}
Let us prove (a). Notice that if $x\in B(z_Q,2\diam(Q))$, then $\supp \bigl[K_m(x-\cdot)\bigr]\subset B_Q$. Moreover, $\|\nabla 
K_m(x-\cdot)\|_\infty\lesssim
\ell(Q)^{-n-1}$. Given an arbitrary $n$-plane $L$ and $c\geq0$, if $x\in L$ we have
$$\biggl|\int K_m(x-y)\,d\mu(y) - c\int K_m(x-y)\,d\HH^n_{|L}(y)\biggr| \lesssim \frac1{\ell(Q)^{n+1}}\,\dist_{B_Q}(\mu,c\HH^n_{|L}).$$
Since $K_m(\cdot)$ is odd, the second integral vanishes, and so
$$|T_m\mu(x)| \lesssim\frac1{\ell(Q)^{n+1}}\,\dist_{B_Q}(\mu,c\HH^n_{|L}).$$
If we choose $L=L_Q$ and $c=c_Q$, the statement (a) follows.

Now we will prove (b) and (c) simultaneously.
Given $x\in Q$, let $x'$ be the orthogonal projection of $x$ onto $L_Q$. Then we have
$$|T_m\mu(x) - T_m\mu(x')|\lesssim \frac{|x-x'|}{\ell(Q)^{n+1}}\,\mu(Q) \lesssim\frac{\dist(x,L_Q)}{\ell(Q)}.$$
We may assume that $x'\in B(z_Q,2\diam(Q))$ because otherwise $\dist(x,L_Q)\geq\ell(Q)/2$ and this would mean that 
$\AZ(Q)\geq\alpha(Q)\gtrsim1$ (see Remark \ref{rem1}), and then (b) and (c) would 
be trivial. So, using (a), we have
$$|T_m\mu(x)| \lesssim \frac{\dist(x,L_Q)}{\ell(Q)} + |T_m\mu(x')| \lesssim \frac{\dist(x,L_Q)}{\ell(Q)} + \alpha(Q).$$
Thus, for $p\geq1$,
$$\int|T_Q\mu|^p\,d\mu \lesssim \int_Q\biggl(\frac{\dist(x,L_Q)}{\ell(Q)}\biggr)^p\,d\mu(x) + \alpha(Q)^p\mu(Q).$$
For $p=1$ the first integral on the right side is bounded above by $C\alpha(Q)\mu(Q)$, by Remark~\ref{rem1}, and (b) follows.
On the other hand, if we choose $p=2$, (c) is a consequence of next lemma.
\end{proof}

\begin{lemma} \label{lemdif1}
Let $x\in\supp(\mu)$ and $Q\in\DD$ such that $x\in Q$. We have
\begin{equation}\label{claim1}
\dist(x,L_Q) \lesssim \sum_{P\in\DD:P\subset Q,\,x\in P}\alpha(P)\,\ell(P),
\end{equation}
and
\begin{equation}\label{claim11}
\int_Q\biggl(\frac{\dist(x,L_Q)}{\ell(Q)}\biggr)^2\,d\mu(x) \lesssim {\mathcal A}^2(Q)\mu(Q).
\end{equation}
\end{lemma}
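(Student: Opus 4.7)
I would obtain \rf{claim1} via a telescoping argument along the chain of dyadic cubes containing $x$, and then deduce \rf{claim11} from \rf{claim1} by Cauchy-Schwarz followed by Fubini.

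Enumerate the cubes $P_{j_0} = Q \supset P_{j_0+1} \supset P_{j_0+2} \supset \cdots$ with $x \in P_j \in \DD_j$, so that $\ell(P_j) = 2^{-j}$ and $P_{j+1}$ is the (unique) cube of generation $j+1$ containing $x$. Since $L_{P_j} \cap B_{P_j} \neq \varnothing$ and $x \in P_j \subset B_{P_j}$, one has $\dist(x, L_{P_j}) \lesssim \ell(P_j) \to 0$ as $j \to \infty$. The key step is the telescoping bound
\begin{equation*}
|\dist(x, L_{P_j}) - \dist(x, L_{P_{j+1}})| \lesssim \alpha(P_j)\,\ell(P_j),
\end{equation*}
which follows from Lemma \ref{lempr2} applied to the parent-child pair $P_{j+1} \subset P_j$ (so $\eta = 1/2$): it gives $\dist_H(L_{P_{j+1}} \cap B_{P_j},\, L_{P_j} \cap B_{P_j}) \lesssim \alpha(P_j)\,\ell(P_j)$. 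Taking the nearest points $y \in L_{P_j}$ and $z \in L_{P_{j+1}}$ to $x$, and verifying via Lemma \ref{lempr0}(c) that these projections sit inside $B_{P_j}$ when $\alpha(P_j)$ is small, I would transfer $z$ to a nearby point on $L_{P_j}$ (or vice versa) at cost $\lesssim \alpha(P_j)\,\ell(P_j)$. Summing telescopically over $j \geq j_0$ and using $\dist(x, L_{P_j}) \to 0$ yields \rf{claim1}.

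For \rf{claim11} I would apply Cauchy-Schwarz to \rf{claim1} at each fixed $x$. Since the cubes $P \in \DD(Q)$ with $x \in P$ form a nested chain, $\sum_P \ell(P) \lesssim \ell(Q)$, and hence
\begin{equation*}
\Bigl(\sum_{P \in \DD(Q):\, x \in P} \alpha(P)\,\ell(P)\Bigr)^2 \leq \Bigl(\sum_P \ell(P)\Bigr)\Bigl(\sum_P \alpha(P)^2\,\ell(P)\Bigr) \lesssim \ell(Q) \sum_{P \in \DD(Q):\, x \in P} \alpha(P)^2\,\ell(P).
\end{equation*}
Integrating against $\mu$ over $Q$ and exchanging sum and integral by Fubini (using $\mu(\{x \in Q : x \in P\}) = \mu(P)$ when $P \subset Q$) yields
\begin{equation*}
\int_Q \Bigl(\frac{\dist(x, L_Q)}{\ell(Q)}\Bigr)^2 d\mu(x) \lesssim \frac{1}{\ell(Q)} \sum_{P \in \DD(Q)} \alpha(P)^2\,\ell(P)\,\mu(P) = \mathcal{A}^2(Q)\,\mu(Q),
\end{equation*}
which is \rf{claim11}.

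The main technical obstacle is the telescoping estimate itself, because Lemma \ref{lempr2} only controls the Hausdorff distance of $L_{P_j}$ and $L_{P_{j+1}}$ inside the ball $B_{P_j}$, and one must verify that the relevant nearest-point projections of $x$ onto these two planes actually lie in this ball. When $\alpha(P_j) \gtrsim 1$ the crude bound $|\dist(x, L_{P_j}) - \dist(x, L_{P_{j+1}})| \lesssim \ell(P_j) \lesssim \alpha(P_j)\,\ell(P_j)$ suffices, so only the regime $\alpha(P_j) \ll 1$ is delicate; there Lemma \ref{lempr0}(c) forces $L_{P_j}$ to pass within $O(\ell(P_j))$ of $z_{P_j}$, which places the orthogonal projection of $x$ comfortably inside $B_{P_j}$ and makes the Hausdorff transfer legitimate.
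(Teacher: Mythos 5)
Your proof is correct and follows essentially the same route as the paper: \rf{claim11} is deduced from \rf{claim1} by exactly the same Cauchy--Schwarz argument, and \rf{claim1} is obtained by iterating the Hausdorff-distance estimate of Lemma \ref{lempr2} along the nested chain of dyadic cubes containing $x$ and summing the geometric series. The paper's implementation differs only in bookkeeping (it follows a point projected successively onto the planes $L_{Q_m}$, with generations spaced by a factor $n_0$ and a stopping time at the first cube with $\alpha\geq\ve_0$ to keep those points inside the balls $B_{Q_m}$), whereas you telescope $\dist(x,L_{P_j})$ directly and absorb the large-$\alpha$ cubes by the trivial bound $\lesssim\ell(P_j)$, which is a legitimate, slightly leaner variant of the same idea.
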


\begin{proof}
Let us see how \rf{claim11} follows from \rf{claim1}. By Cauchy-Schwartz we have
\begin{align*}
\biggl(\frac{\dist(x,L_Q)}{\ell(Q)}\biggr)^2 &\lesssim \biggl(\sum_{P\in\DD:P\subset Q,\,x\in P}\alpha(P)^2\,
\frac{\ell(P)}{\ell(Q)}\biggr) 
\biggl(\sum_{P\in\DD:P\subset Q,\,x\in P}
\frac{\ell(P)}{\ell(Q)}\biggr) \\
& \lesssim \sum_{P\in\DD:P\subset Q,\,x\in P}\alpha(P)^2\,
\frac{\ell(P)}{\ell(Q)},
\end{align*}
and then \rf{claim11} follows by integration on $Q$.

To prove \rf{claim1}, let $n_0\geq1$ be some integer to be fixed below, and consider the sequence of dyadic 
cubes $Q=Q_0\supset Q_1\supset Q_2\ldots$ such that $x\in Q_m$ 
for each $m\geq1$ and $\ell(Q_m)=2^{-mn_0}\ell(Q)$. 
Let $\ve_0$ be some (small) constant that will be fixed below too.
Let $N\geq0$ be the least integer such that $\alpha(Q_N)\geq\ve_0$. If $N$ does not exist because 
$\alpha(Q_m)<\ve_0$ for all~$m\geq0$, we let $N$ be an arbitrary positive integer. Let $a_N$ be 
any point from $Q_N$ and for $m=N-1,\,N-2,\ldots,0$ let $a_m$ be the orthogonal projection of $a_{m+1}$
onto $L_{Q_m}$. Then we have
\begin{equation} \label{cll0}
\dist(a_N,L_Q) \leq \dist(a_N,L_{Q_N}) + \sum_{m=0}^{N-1} \dist(a_m,a_{m+1}).
\end{equation}
Our next objective consists in showing that 
\begin{equation}\label{cll2}
|a_m - a_{m+1}|\lesssim \alpha(Q_m)\ell(Q_m)\quad \mbox{ for $m=0,\,1,\ldots,N-1$.}
\end{equation}
Let us see first that \rf{claim1} follows from this estimate. Indeed, from \rf{cll0} and \rf{cll2} we infer
$$\dist(a_N,L_Q) \lesssim \dist(a_N,L_{Q_N}) + \sum_{m=0}^{N-1} \alpha(Q_m)\ell(Q_m).$$
If $\alpha(Q_m)<\ve_0$ for all $m\geq0$ (in this case $N$ was chosen as an arbitrary positive integer), we let $N\to\infty$ in the preceding
inequality, and then $a_N\to x$ and $\dist(a_N,L_{Q_N})\to0$, and so \rf{claim1} follows. If $\alpha(Q_N)\geq\ve_0$, then 
$$|x-a_N| + \dist(a_N,L_{Q_N})\lesssim\ell(Q_N) \leq \ve_0^{-1}\alpha(Q_N)\ell(Q_N).$$
Thus, by \rf{cll0} and \rf{cll2},
\begin{align*}
\dist(x,L_Q) & \leq |x-a_N| + \dist(a_N,L_{Q_N}) + \sum_{m=0}^{N-1} \dist(a_m,a_{m+1})
 \lesssim \sum_{m=0}^{N} \alpha(Q_m)\ell(Q_m).
\end{align*}

To prove \rf{cll2} we wish to apply Lemma \ref{lempr2}. Then we 
need to show first that $a_m\in B_{Q_m}$ for $m=N,\,N-1,\ldots,1$. 
We argue by backward induction. Indeed, for $m=N$, this holds by the definition of $a_N$. Assume now that $a_{m+1}\in B_{Q_{m+1}}$
and let us see that $a_m\in B_{Q_m}$. Remember that for $m=N-1,\,N-2,\ldots,1$, we have $\alpha(Q_m)\leq\ve_0$. By the AD regularity of~$\mu$, 
all points $y\in Q_{m+1}\subset Q_m$ satisfy
$$\dist(y,L_{Q_m})\leq C(\ve_0)\ell(Q_m)\leq\diam(Q_{m+1})/2,$$
assuming that $\ve_0$ has been chosen small enough (depending also on choice of $n_0$). So 
we infer that $L_{Q_m}\cap B_{Q_{m+1}}\neq\varnothing$. Recall 
that, by the induction hypothesis, $a_{m+1}\in B_{Q_{m+1}}$. If $n_0$ has been chosen big enough we deduce that 
\begin{equation} \label{auxx1}
|a_{m+1} - a_m| = \dist(a_{m+1},L_{Q_m})\leq\diam(B_{Q_{m+1}}) \leq\diam(Q_m)/4.
\end{equation}
It is straightforward to check that $B_{Q_{m+1}}\subset B(z_{Q_m},2\diam(Q_m))$ for $n_0$ big enough. Thus we have $a_{m+1}\in 
B(z_{Q_m},2\diam(Q_m))$.  This fact and \rf{auxx1} imply that $a_m\in B_{Q_m}$.

The estimate \rf{cll2} follows now easily from Lemma \ref{lempr2} using the fact that $a_m,a_{m+1}\in B_{Q_m}$:
$$|a_m- a_{m+1}| \leq \dist_H (L_{Q_m}\cap B_{Q_m},\, L_{Q_{m+1}}\cap B_{Q_m}) \lesssim\alpha(Q_m)\,\ell(Q_m).$$
\end{proof}

\begin{remark}\label{remfa}
Almost the same arguments used to prove \rf{claim1} show that if $S,Q\in\DD$ are cubes such that $S\subset Q$
and $x\in\supp(\mu)\cap 2S$, then we have
\begin{equation}\label{claim12}
\dist(x,L_Q) \lesssim \dist(x,L_S) + \sum_{P\in\DD:S\subset P\subset Q}\alpha(P)\,\ell(P).
\end{equation}
\end{remark}


\subsection{Estimate of \boldmath$\sum_{j\in\Z}\|T_j\mu\|_{L^2(\mu)}^2$} 

The following lemma is an easy consequence of (c) in Lemma \ref{lempr3}.

\begin{lemma} \label{lemtq2}
For every $R\in\DD$, we have
$$\sum_{Q\in\DD(R)}\|T_Q\mu\|_{L^2(\mu)}^2 \lesssim\sum_{Q\in\DD(R)}\alpha(Q)^2\mu(Q).$$
\end{lemma}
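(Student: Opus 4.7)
My plan is to apply part (c) of Lemma \ref{lempr3} termwise and then swap the order of summation. Concretely, for each $Q\in\DD(R)$ we have
$$\|T_Q\mu\|_{L^2(\mu)}^2 \;\lesssim\; \AZ^2(Q)\,\mu(Q) \;=\; \sum_{P\in\DD(Q)} \alpha(P)^2\,\frac{\ell(P)}{\ell(Q)}\,\mu(P),$$
so summing over $Q\in\DD(R)$ and exchanging the order of the two summations yields
$$\sum_{Q\in\DD(R)}\|T_Q\mu\|_{L^2(\mu)}^2 \;\lesssim\; \sum_{P\in\DD(R)} \alpha(P)^2\,\mu(P) \sum_{\substack{Q\in\DD(R)\\ Q\supset P}} \frac{\ell(P)}{\ell(Q)}.$$
(Note that if $P\in\DD(Q)$ and $Q\in\DD(R)$ then automatically $P\in\DD(R)$, so the outer index is correct.)

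The inner sum is a geometric series. For fixed $P$, the cubes $Q\in\DD$ with $P\subset Q\subset R$ form a chain indexed by the generations $J(R)\leq J(Q)\leq J(P)$, with exactly one cube per generation, and $\ell(P)/\ell(Q)=2^{J(Q)-J(P)}$. Hence
$$\sum_{\substack{Q\in\DD(R)\\ Q\supset P}} \frac{\ell(P)}{\ell(Q)} \;=\; \sum_{k=0}^{J(P)-J(R)} 2^{-k} \;\leq\; 2.$$
Inserting this bound finishes the argument:
$$\sum_{Q\in\DD(R)}\|T_Q\mu\|_{L^2(\mu)}^2 \;\lesssim\; \sum_{P\in\DD(R)} \alpha(P)^2\,\mu(P).$$

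There is no real obstacle here; the whole proof is essentially the observation that the weights $\ell(P)/\ell(Q)$ in the definition of $\AZ^2(Q)$ are chosen precisely so that summing first over ancestors $Q$ of $P$ produces a bounded geometric factor, which is exactly what converts the pointwise bound of Lemma \ref{lempr3}(c) into the desired Carleson-type packing estimate.
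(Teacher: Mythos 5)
Your proof is correct and coincides with the paper's argument: both apply Lemma \ref{lempr3}(c), interchange the order of summation, and bound the resulting geometric sum $\sum_{Q:P\subset Q\subset R}\ell(P)/\ell(Q)$ by an absolute constant. The only difference is that you spell out the geometric-series estimate explicitly, which the paper leaves implicit.
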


\begin{proof}
By (c) in Lemma \ref{lempr3} we have
\begin{align*}
\sum_{Q\in\DD(R)}\|T_Q\mu\|_{L^2(\mu)}^2 & \lesssim \sum_{Q\in\DD:Q\subset R} {\mathcal A}^2(Q)\mu(Q) \\ & =
\sum_{Q\in\DD:Q\subset R} \sum_{P\in\DD:P\subset Q} \alpha(P)^2\,\frac{\ell(P)}{\ell(Q)}\,\mu(P)\\
& = \sum_{P\in\DD:P\subset R} \alpha(P)^2 \mu(P)\sum_{Q\in\DD:P\subset Q\subset R} \frac{\ell(P)}{\ell(Q)}\\
& \lesssim \sum_{P\in\DD:P\subset R} \alpha(P)^2 \mu(P).
\end{align*}
\end{proof}

So we have
\begin{equation}\label{sumj0}
\sum_{j\in\Z}\|T_j\mu\|_{L^2(\mu)}^2 \lesssim \sum_{Q\in\DD}\alpha(Q)^2\mu(Q).
\end{equation}

\begin{remark}
In \cite{DS1} it is shown that the following condition is necessary and sufficient for~$\mu$ to be uniformly
rectifiable: for each $\CC^\infty$, compactly supported, odd function $\psi:\R^d\to\R$, there is a $C>0$
such that for any cube $R\in\DD$,
\begin{equation}\label{sumj1}
\sum_{j\geq J(R)} \int_{x\in R} \biggl| \int \psi_j(x-y)\,d\mu(y)\biggr|^2 d\mu(x) \leq C\mu(R),
\end{equation}
where $\psi_j(x) = 2^{jn}\psi(2^{j}x)$.

For any $n$ dimensional AD regular measure $\mu$, it is easy to check that
\begin{equation}\label{sumj2}
\sum_{j\in\Z} \int \biggl| \int \psi_j(x-y)\,d\mu(y)\biggr|^2 d\mu(x) \lesssim
\sum_{Q\in\DD} \alpha(Q)^2\mu(Q).
\end{equation}
The arguments are very similar to the ones we used to obtain \rf{sumj0}. The role of $T_j\mu$ in \rf{sumj0} is
played now $\int \psi_j(x-y)\,d\mu(y)$. 
As a consequence, if $\mu$ is uniformly rectifiable, then \rf{sumj1} can be deduced from \rf{sumj2} applied
to $\mu_{|R}$. This way of proving that uniformly rectifiable measures satisfy \rf{sumj1} is very different from the
one in \cite{DS1}.
\end{remark}


\section{Estimate of $\sum_{j,k:k>j}\langle
T_j\mu,T_k\mu\rangle$ in terms of the $\alpha$'s} \label{sec6}

In this section we will show that
$$\sum_{j,k:k>j}\bigl|\langle T_j\mu,T_k\mu\rangle\bigr| \lesssim \sum_{Q\in\DD}\alpha(Q)^2\mu(Q).$$
The key idea consists in using quasiorthogonality.
This will finish the proof of Theorem \ref{teomain}.

Given $k>j$ fixed, let $m=[(j+k)/2]$, where $[\cdot]$ stands for the integer part. We write
$$\langle T_j\mu,T_k\mu\rangle = \sum_{S\in \DD_m} \langle \varphi_S T_j\mu,T_k\mu\rangle,$$
where $\{\varphi_S\}_{S\in\DD_m}$ is a family of $\CC^\infty$ functions such that each $\vphi_S$ satisfies
 $\supp(\varphi_S) \subset U_{\ell(S)/10}(S)$ (where $U_{\ell(S)/10}(S)$ stands for the $(\ell(S)/10)$-neighborhood of $S$) and 
$\|\nabla\varphi_S\|_\infty \leq C/\ell(S)$, and moreover $\sum_{S\in\DD_m}\varphi_S=1$ on $E$.
Let $x_S$ be the orthogonal projection of the center of $S$, $z_S$, onto $L_S$. We set
\begin{equation}\label{eqAB}
\begin{split}
\langle T_j\mu,T_k\mu\rangle & = \sum_{S\in \DD_m} \bigl\langle \varphi_S \bigl(T_j\mu - T_j\mu(x_S)\bigr),T_k\mu\bigr\rangle
+ \sum_{S\in \DD_m}  T_j\mu(x_S) \langle \varphi_S,T_k\mu\rangle \\
 &=: A_{j,k} + B_{j,k}.
 \end{split}
\end{equation}


\subsection{Estimates for \boldmath$A_{j,k}$ in \rf{eqAB}} \label{subsecajk}

We write $A_{j,k}$ as follows:
$$
A_{j,k} = \sum_{R\in\DD_j}\,\,
\sum_{S\in \DD_m:S\subset R} \bigl\langle \varphi_S \bigl(T_j\mu - T_j\mu(x_S)\bigr),T_k\mu\bigr\rangle.$$
Thus,
$$\sum_{j,k:k>j} A_{j,k} = 
 \sum_{R\in\DD}\,\sum_{k>J(R)}\,
\sum_{S\in \DD_m:S\subset R}\!\!\!\!  \bigl\langle \varphi_S \bigl(T_{J(R)}\mu - 
T_{J(R)}\mu(x_S)\bigr),T_k\mu\bigr\rangle =:  \sum_{R\in\DD} A_R,$$ 
where $J(R)$ stands for the generation of $R$.

We will need the following lemma.

\begin{lemma}\label{lempr4}
 Given $Q\in \DD_m$ and $x,y\in B(z_Q,2\diam(Q))$, we have
\begin{equation}\label{nab}
\begin{split}
|T_m\mu(x) - T_m\mu(y)| &\lesssim  \frac{\alpha(Q)\ell(Q)+
\dist(x,L_Q) + \dist(y,L_Q)}{\ell(Q)^2}\,|x-y| \\ 
&\quad+
\frac{|\Pi_{L_Q^\bot}(x-y)|}{\ell(Q)},
\end{split}
\end{equation}
where $\Pi_{L_Q^\bot}$ denotes the orthogonal projection on the the subspace orthogonal to $L_Q$.
\end{lemma}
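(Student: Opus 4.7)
The natural strategy is to subtract the flat comparison measure $\LL_Q=c_Q\HH^n_{|L_Q}$ and split
\[
T_m\mu(x)-T_m\mu(y) \,=\, \bigl[T_m(\mu-\LL_Q)(x)-T_m(\mu-\LL_Q)(y)\bigr] \,+\, \bigl[T_m\LL_Q(x)-T_m\LL_Q(y)\bigr],
\]
handling the first bracket via the definition of $\alpha(Q)$ and the second via oddness of $K_m$ on the flat measure.

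For the first bracket, the function $\phi(z):=K_m(x-z)-K_m(y-z)$ satisfies $\supp(\phi)\subset B(x,\ell(Q)/4)\cup B(y,\ell(Q)/4)\subset B_Q$ (using $x,y\in B(z_Q,2\diam(Q))$ and $\diam(Q)\approx\ell(Q)$, enlarging $B_Q$ by a harmless constant factor if needed) and, by the $C^2$ estimate $|\nabla^2 K_m|\lesssim \ell(Q)^{-n-2}$, has ${\rm Lip}(\phi)\lesssim |x-y|\,\ell(Q)^{-n-2}$. Since $\dist_{B_Q}(\mu,\LL_Q)=\alpha(Q)\,\ell(Q)^{n+1}$, the definition of the distance yields
\[
\Bigl|T_m(\mu-\LL_Q)(x)-T_m(\mu-\LL_Q)(y)\Bigr|\,\lesssim\,\frac{|x-y|}{\ell(Q)^{n+2}}\cdot\alpha(Q)\,\ell(Q)^{n+1}\,=\,\frac{\alpha(Q)\,|x-y|}{\ell(Q)},
\]
which matches the $\alpha(Q)\ell(Q)\cdot|x-y|/\ell(Q)^2$ contribution in \rf{nab}.

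For the second bracket, the plan is to exploit the translation invariance of $\HH^n_{|L_Q}$ along $L_Q$ together with the oddness of $K_m$. Fix $x_0\in L_Q$ and let $V:=L_Q-x_0$ be the linear $n$-plane parallel to $L_Q$. A direct change of variables and the substitution $w\mapsto -w$ give
\[
T_m\LL_Q(p) \,=\, c_Q\!\int_V\! K_m\bigl(\Pi_{L_Q^\perp}(p-x_0)+w\bigr)\,d\HH^n_V(w) \,=:\, c_Q\,\tilde F(\xi_p),\qquad \xi_p:=\Pi_{L_Q^\perp}(p-x_0),
\]
so $T_m\LL_Q$ depends on $p$ only through its projection $\xi_p\in L_Q^\perp$ (in particular, it is constant along directions tangent to $L_Q$). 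Oddness of $K_m$ and the symmetry $w\mapsto -w$ of $V$ force $\tilde F$ to be an odd function, recovering at $\xi=0$ the classical vanishing of $T_m\LL_Q$ on $L_Q$. A routine estimate using $|\nabla K_m|\lesssim \ell(Q)^{-n-1}$ on a region in $V$ of $n$-volume $\lesssim\ell(Q)^n$ gives $\|\nabla\tilde F\|_\infty\lesssim \ell(Q)^{-1}$. Since $c_Q\lesssim 1$ (Lemma \ref{lempr0}) and $\xi_x-\xi_y=\Pi_{L_Q^\perp}(x-y)$, the mean value theorem yields
\[
|T_m\LL_Q(x)-T_m\LL_Q(y)|\,\lesssim\,\frac{|\xi_x-\xi_y|}{\ell(Q)}\,=\,\frac{|\Pi_{L_Q^\perp}(x-y)|}{\ell(Q)}.
\]

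Adding the two bounds produces
\[
|T_m\mu(x)-T_m\mu(y)|\,\lesssim\,\frac{\alpha(Q)\,|x-y|}{\ell(Q)}\,+\,\frac{|\Pi_{L_Q^\perp}(x-y)|}{\ell(Q)},
\]
which is in fact a slight strengthening of \rf{nab}: the author's formulation permits the extra $\dist(x,L_Q)+\dist(y,L_Q)$ in the numerator of the first summand, presumably because in later applications these distances will themselves be controlled via Lemma \ref{lemdif1}. The main conceptual step is the oddness/tangential-invariance argument giving $\|\nabla\tilde F\|_\infty\lesssim\ell(Q)^{-1}$; all other ingredients are routine size and Lipschitz bounds on $K_m$ together with the definition of $\alpha(Q)$. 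The only technical care needed is verifying that the supports of the test functions fit inside $B_Q$, which works because $\diam(Q)\approx\ell(Q)$.
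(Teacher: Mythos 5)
Your proof is correct, but it follows a genuinely different route from the paper's. The paper proves \rf{nab} by estimating the directional derivatives of $T_m\mu$: for $u$ parallel to $L_Q$ it compares $\int\nabla_u K_m(x-\cdot)\,d\mu$ with $\int\nabla_u K_m(x-\cdot)\,d\LL_Q$ to get $|\nabla_u T_m\mu|\lesssim\alpha(Q)/\ell(Q)$ \emph{on} $L_Q$ (using that $T_m\LL_Q\equiv0$ on $L_Q$), then transfers this bound to points off $L_Q$ via $\|\nabla^2 T_m\mu\|_\infty\lesssim\ell(Q)^{-2}$ --- which is exactly where the $\dist(x,L_Q)+\dist(y,L_Q)$ terms in \rf{nab} come from --- and finally integrates along the segment $[x,y]$, splitting into tangential and normal components. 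You instead decompose the \emph{measure}, $\mu=(\mu-\LL_Q)+\LL_Q$: the non-flat part of the increment is handled in one stroke by testing $\mu-\LL_Q$ against $\phi=K_m(x-\cdot)-K_m(y-\cdot)$, whose Lipschitz norm $\lesssim|x-y|\,\ell(Q)^{-n-2}$ comes from the $\CC^2$ bound \rf{smoot1} (the same place the paper uses it), while the flat part is killed by translation invariance of $\HH^n_{|L_Q}$ along $L_Q$ together with $\|\nabla K_m\|_\infty\lesssim\ell(Q)^{-n-1}$. Your version buys a slightly stronger conclusion, with no $\dist(\cdot,L_Q)$ terms at all, and avoids differentiating $T_m\mu$ pointwise; the paper's version is what naturally drops out of its ``estimate the tangential derivative, then correct by the second derivative'' scheme and is all that is needed later (in \rf{a1234} and in the bound for $B^1_{j,k}$). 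Two small points to patch: (i) the inequality $c_Q\lesssim1$ from Lemma \ref{lempr0}(c) is only stated under $\alpha(Q)\leq C_4$, so you should first dispose of the case $\alpha(Q)\gtrsim1$, where \rf{nab} is immediate from $\|\nabla T_m\mu\|_\infty\lesssim\ell(Q)^{-1}$; (ii) no enlargement of $B_Q$ is actually needed for the support of $\phi$: with the shifted cutoffs $\vphi_j=\psi_{j+3}-\psi_{j+4}$ of Subsection \ref{subdec}, $\supp K_m(x-\cdot)\subset B(x,\ell(Q)/4)\subset B_Q$ for $x\in B(z_Q,2\diam(Q))$, which is precisely the containment the paper itself invokes in Lemma \ref{lempr3}(a); if you enlarged $B_Q$ you could no longer quote $\alpha(Q)$ verbatim.
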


Let us remark that in the proof of the preceding lemma we will use the assumption
\begin{equation} \label{smoot1}
|\nabla^2 K(x)| \leq \frac{C}{|x|^{n+2}} \qquad \forall x\in\R^d\setminus \{0\}.
\end{equation}
This is the only place in this paper where it is used.

\begin{proof}
Let $u$ be a unit vector parallel to $L_Q$. First we will show that for any $x\in B(z_Q,\frac52 \diam(Q)) $
\begin{equation}\label{derivdic}
|\nabla_u T_m\mu(x)| \lesssim\frac{\alpha(Q)}{\ell(Q)} + \frac{\dist(x,L_Q)}{\ell(Q)^2},
\end{equation}
where $\nabla_u$ stands for the directional derivative in the direction of $u$. Indeed, for any $x\in B(z_Q,\frac52 \diam(Q))$,
$$\nabla_u T_m\mu(x) = \int\nabla_u K_m(x-y)\,d\mu(y).$$
By the assumption \rf{smoot1}, we have
$$\biggl| \int\nabla_u K_m(x-y)\,d\mu(y) - \int\nabla_u K_m(x-y)\,d\LL_{Q}(y)\biggr|\lesssim\frac{\alpha(Q)}{\ell(Q)}.$$
For $x\in L_Q\cap B(z_Q,\frac52 \diam(Q))$, notice that the second integral on the left hand side above vanishes because 
$T_m\LL_{Q}$ vanishes identically on $L_Q$, and so $\nabla_u T_m\LL_{Q}(x)=0$. Therefore, 
\begin{equation} \label{eqx1}
|\nabla_u T_m\mu(x)| \lesssim\frac{\alpha(Q)}{\ell(Q)} \quad\mbox{ if $x\in B(z_Q,\frac52 \diam(Q))\cap L_Q$.}
\end{equation}
Consider now $x\in B(z_Q,2 \diam(Q))$ and let $x'$ be the orthogonal projection of $x$ onto $L_Q$. We may assume that 
$x'\in B(z_Q,\frac52 \diam(Q))$ because
otherwise $\dist(x,L_Q)\gtrsim\ell(Q)$ and then \rf{derivdic} is trivial in this case. 
Thus, from \rf{eqx1}, if $x\in B(z_Q,2 \diam(Q))$ and $x'\in B(z_Q,\frac52 \diam(Q))$ we get
$$|\nabla_u T_m\mu(x)| \lesssim\frac{\alpha(Q)}{\ell(Q)} + \|\nabla^2T_m\mu\|_\infty\,\dist(x,L_Q),$$
which yields \rf{derivdic}.

With \rf{derivdic} at hand, the lemma follows easily: given $x,y\in B(z_Q,2 \diam(Q))$, we have
\begin{equation} \label{eqx2}
|T_m\mu(x) - T_m\mu(y)| \leq \sup_u \|\nabla_uT_m\mu\|_{\infty,[x,y]} \,|x-y| + \|\nabla T_m\mu\|_\infty |\Pi_{L_Q^\bot}(x-y)|,
\end{equation}
where the supremum on the right side is taken over all unit vectors parallel to $L_Q$. From~\rf{derivdic} we get
\vspace*{-5pt}
\begin{align*}
\sup_u \|\nabla_uT_m\mu\|_{\infty,[x,y]} & \lesssim \frac{\alpha(Q)}{\ell(Q)} + \frac{\sup_{z\in[x,y]}\dist(z,L_Q)}{\ell(Q)^2} \\
& \leq \frac{\alpha(Q)}{\ell(Q)} + \frac{\dist(x,L_Q)+\dist(y,L_Q)}{\ell(Q)^2}.
\end{align*}
Plugging this estimate into \rf{eqx2} we are finished with the lemma.
\end{proof}

By the preceding result and the definition of $A_{R}$, we have
\begin{equation} \label{a1234}
\begin{split}
|A_{R}|&\lesssim  \sum_{k>J(R)}\sum_{S\in\DD_m:\,S\subset R} \int_{\frac32 S} |T_k\mu|\,\biggl[ \alpha(R)
\frac{\ell(S)}{\ell(R)}  \\ & \quad + \dist(x,L_R)\,
\frac{\ell(S)}{\ell(R)^2} + \dist(x_S,L_R)\,
\frac{\ell(S)}{\ell(R)^2}  +\frac{|\Pi_{L_R^\bot}(x-x_S)|}{\ell(R)}\biggr]\,d\mu(x).
\end{split}
\end{equation}

By Remark \ref{remfa} we have
\begin{equation}\label{dd34}
\dist(x,L_R)\lesssim \dist(x,L_S) + \sum_{P\in \DD:S\subset P\subset R}\alpha(P)\ell(P).
\end{equation}
The same estimate holds if we replace $x$ by $x_S$ (and in this case $\dist(x_S,L_S)=0$).

Now we want to estimate the term $|\Pi_{L_R^\bot}(x-x_S)|$. Let $Q\in\DD_k$ be such that $x\in Q$. We have
$$|\Pi_{L_R^\bot}(x-x_S)| \leq |\Pi_{L_S^\bot}(x-x_S)| + \!\!\!\sum_{P\in \DD:Q\subset P\subset 3R}\!\!\!
|\Pi_{L_P^\bot}(x-x_S) - \Pi_{L_\wh{P}^\bot}(x-x_S)|,$$
where $\wh{P}$ stands for the parent of $P$. Since $x_S\in L_S$, we have $|\Pi_{L_S^\bot}(x-x_S)|=\dist(x,L_S)$.
Moreover, from Lemma \ref{lempr2} it follows easily that
$$|\Pi_{L_P^\bot}(x-x_S) - \Pi_{L_\wh{P}^\bot}(x-x_S)| \lesssim \alpha(\wh{P})|x-x_S|\lesssim \alpha(\wh{P})\ell(S).$$
Therefore, recalling that $m=m(J(R),k)=[(J(R)+k)/2]$,
\begin{equation}\label{sususu}
|\Pi_{L_R^\bot}(x-x_S)|\lesssim \dist(x,L_S)+\sum_{P\in \DD:Q\subset P\subset 3R}\alpha(P)\ell(Q)^{1/2}\ell(R)^{1/2}.
\end{equation}

From \rf{a1234}, 
 \rf{dd34}, and \rf{sususu}, we infer that
\begin{equation} \label{aaa}
\begin{split}
|A_{R}| &\lesssim  \sum_{Q\subset 3R} \int |T_Q\mu(x)| \,\frac{\dist(x,L_S)}{\ell(R)}\,d\mu(x) \\
& \quad + \sum_{Q\subset 3R}
 \frac{\ell(Q)^{1/2}}{\ell(R)^{1/2}}\int |T_Q\mu(x)| \,\Bigl[\sum_{P\in \DD:Q\subset P\subset 3R}\alpha(P)\Bigr]\,d\mu(x) =: A_{R}^1 + A_{R}^2.
\end{split}
\end{equation}
Notice that, although it is not stated explicitly, $S$ depends on $Q$ in $A_R$. In fact, we should 
properly write $S_Q$ instead of $S$.


\subsubsection*{Estimate of $A_{R}^1$ in \rf{aaa}}

For each $Q\subset R$, by Cauchy-Schwartz we have
\begin{align*}
A_{R}^1 &\leq   \sum_{Q\subset 3R} \frac{\ell(S)}{\ell(R)}
\biggl(\int |T_Q\mu|^2 \,d\mu\biggr)^{1/2}
\biggl(\int_Q \frac{\dist(x,L_S)^2}{\ell(S)^2}\,d\mu(x)\biggr)^{1/2}\\
&\lesssim  \sum_{Q\subset 3R} \frac{\ell(Q)^{1/2}}{\ell(R)^{1/2}}
 \int |T_Q\mu|^2 \,d\mu + \sum_{S\subset 3R}\frac{\ell(S)}{\ell(R)}
\int_S \frac{\dist(x,L_S)^2}{\ell(S)^2}\,d\mu(x).
\end{align*}
Using Lemmas \ref{lempr3} and \ref{lemdif1}, we get
\begin{align*}
\sum_{R\in \DD} A_{R}^1 &\lesssim  \sum_{R\in \DD}  \sum_{Q\subset 3R} \frac{\ell(Q)^{1/2}}{\ell(R)^{1/2}} {\mathcal A}^2(Q)\,\mu(Q) +
\sum_{R\in \DD}  \sum_{S\subset 3R} \frac{\ell(S)}{\ell(R)} {\mathcal A}^2(S)\,\mu(S) \\
&\lesssim  \sum_{Q\in \DD} {\mathcal A}^2(Q)\,\mu(Q) \lesssim \sum_{Q\in \DD} \alpha(Q)^2\,\mu(Q).
\end{align*}


\subsubsection*{Estimate of $A_{R}^2$ in \rf{aaa}}

By Lemma \ref{lempr3} and Cauchy-Schwartz, we have
\begin{equation}\label{a22}
\begin{split}
A_{R}^2 &\lesssim 
\sum_{Q\subset 3R}
 \frac{\ell(Q)^{1/2}}{\ell(R)^{1/2}} \Bigl[\sum_{P\in \DD:Q\subset P\subset 3R}\alpha(P)\Bigr]^2\mu(Q)\\
&\lesssim  \sum_{Q\subset 3R}
 \frac{\ell(Q)^{1/2}}{\ell(R)^{1/2}}\,\log\biggl(2+\frac{\ell(R)}{\ell(Q)}\biggr)
\sum_{P\in \DD:Q\subset P\subset 3R}\alpha(P)^2 \mu(Q). 
\end{split}
\end{equation}
Thus,
\begin{align*}
\sum_{R\in \DD} A_{R}^2 &\lesssim  \sum_{R\in \DD} 
\sum_{Q\subset 3R}
 \frac{\ell(Q)^{1/3}}{\ell(R)^{1/3}}
\sum_{P\in \DD:Q\subset P\subset 3R}\alpha(P)^2 \mu(Q)\\
&=  \sum_{P\in \DD} \alpha(P)^2 \sum_{Q:Q\subset P} \mu(Q)\sum_{R:3R\supset P}  \frac{\ell(Q)^{1/3}}{\ell(R)^{1/3}}\\
&\lesssim  \sum_{P\in \DD} \alpha(P)^2 \sum_{Q:Q\subset P} \mu(Q)\,\frac{\ell(Q)^{1/3}}{\ell(P)^{1/3}} \lesssim 
\sum_{P\in \DD} \alpha(P)^2 \mu(P).
\end{align*}



\subsection{Estimates for \boldmath$B_{j,k}$ in \rf{eqAB}}

Recall that for $j,k$, with $k>j$, we have
$$B_{j,k} = \sum_{S\in \DD_m}  T_j\mu(x_S) \langle \varphi_S,T_k\mu\rangle,$$
where $m=[(j+k)/2]$.
We say that two cubes $S,T\in\DD_m$ (of the same generation $m$) are neighbors if $\dist(S,T)\leq 2^{-m}$ and $S\neq T$, 
and then we write $S\in N(T)$ and $T\in N(S)$.
The number of neighbors of a given $S\in\DD$ is bounded above independently of $S$, i.e.\
there is some constant $C$ such that $\# \{T\in\DD:T\in N(S)\} \leq C$ for all $S\in\DD$.
From the fact that $\supp(K_k)\subset B(0,2^{-k-2})$ and the
antisymmetry of $T_k$ we infer that, for $S,T\in\DD$ of the same generation, $\langle \varphi_S,T_k(\varphi_T  \mu)\rangle =0$ unless $S$ and $T$ are neighbors.
So we have
\begin{equation} \label{mit1}
B_{j,k} = \sum_{S\in \DD_m} \sum_{T\in N(S)} T_j\mu(x_S)\langle \varphi_S,T_k(\varphi_T\mu)\rangle.
\end{equation}
Using the antisymmetry of $T_k$, reordering the sums, and interchanging the notation of $S$ and~$T$ we get
\begin{equation}\label{mit2}
\begin{split}
B_{j,k} & = - \sum_{S\in \DD_m} \sum_{T\in N(S)} T_j\mu(x_S)\langle \varphi_T,T_k(\varphi_S\mu)\rangle \\
& = - \sum_{T\in \DD_m} \sum_{S\in N(T)} T_j\mu(x_S)\langle \varphi_T,T_k(\varphi_S\mu)\rangle \\
& = - \sum_{S\in \DD_m} \sum_{T\in N(S)} T_j\mu(x_T)\langle \varphi_S,T_k(\varphi_T\mu)\rangle.
\end{split}
\end{equation}
If we take the mean value of \rf{mit1} and \rf{mit2} we obtain
\begin{equation} \label{eqabbb}
\begin{split}
B_{j,k} &=  \frac12 \sum_{S\in \DD_m} \sum_{T\in N(S)} \bigl(T_j\mu(x_S)-T_j\mu(x_T)\bigr)\langle \varphi_S,T_k(\varphi_T\mu)\rangle\\
& = \frac12 \sum_{S\in \DD_m} \sum_{T\in N(S)} \bigl(T_j\mu(x_S)-T_j\mu(x_T)\bigr) \Bigl[
\langle \varphi_S,T_k(\varphi_T\mu)\rangle - \langle \varphi_S,T_k(\varphi_T\LL_S)\rangle_{\LL_S}\Bigr]\\
& \quad +
\frac12 \sum_{S\in \DD_m} \sum_{T\in N(S)} \bigl(T_j\mu(x_S)-T_j\mu(x_T)\bigr)
\langle \varphi_S,T_k(\varphi_T\LL_S)\rangle_{\LL_S} \\
&= \frac12 \bigl(B_{j,k}^1 + B_{j,k}^2\bigr),
\end{split}
\end{equation} 
where we used the notation $\langle f,g\rangle_{\LL_S} = \int fg\,d\LL_S$.


\subsubsection{Estimates for $B_{j,k}^2$ in \rf{eqAB}}

Since $\langle \varphi_S,T_k(\varphi_S\LL_S)\rangle_{\LL_S}=0$ and 
$T_k(\LL_S)$ vanishes identically on $L_S$, we have
\begin{equation}\label{mit4}
\begin{split}
\sum_{S\in \DD_m} \sum_{T\in N(S)} T_j\mu(x_S) &
\langle \varphi_S,T_k(\varphi_T\LL_S)\rangle_{\LL_S} \\ & = \sum_{S\in \DD_m} T_j\mu(x_S)
\langle \varphi_S,T_k(\LL_S)\rangle_{\LL_S} = 0.
\end{split}
\end{equation}
Interchanging the roles of $S$ and $T$, and using the antisymmetry of $T_k$ we also get
\begin{equation} \label{mit5}
\begin{split}
0&= \sum_{T\in \DD_m} \sum_{S\in N(T)} T_j\mu(x_T) \langle \varphi_T,T_k(\varphi_S\LL_T)\rangle_{\LL_T} \\
&= - \sum_{T\in\DD_m} \sum_{S\in N(T)}  T_j\mu(x_T) \langle \varphi_S,T_k(\varphi_T\LL_T)\rangle_{\LL_T}.
\end{split}
\end{equation}
Thus, if we plug \rf{mit4} and \rf{mit5} into the definition of $B_{j,k}^2$, we get
\begin{equation} \label{eqff88}
\begin{split}
B_{j,k}^2 & = 
-\sum_{S\in \DD_m} \sum_{T\in N(S)} T_j\mu(x_T)
\langle \varphi_S,T_k(\varphi_T\LL_S)\rangle_{\LL_S} \\
& = 
\sum_{S\in \DD_m} \sum_{T\in N(S)} 	T_j\mu(x_T) \Bigl[
\langle \varphi_S,T_k(\varphi_T\LL_T)\rangle_{\LL_T} -
\langle \varphi_S,T_k(\varphi_T\LL_S)\rangle_{\LL_S}\Bigr].
\end{split}
\end{equation}

\begin{claim}\label{cla1}
	For all $S,T\in\DD_m$ which are neighbors, we have
$$\Bigl|\langle \varphi_S,T_k(\varphi_T\LL_T)\rangle_{\LL_T} -
\langle \varphi_S,T_k(\varphi_T\LL_S)\rangle_{\LL_S}\Bigr| \lesssim 2^{-|j-k|/2}\alpha(S) \ell(S)^n.$$
\end{claim}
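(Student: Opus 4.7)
The plan is to express the difference as
\begin{equation*}
\langle\vphi_S,T_k(\vphi_T\LL_T)\rangle_{\LL_T}-\langle\vphi_S,T_k(\vphi_T\LL_S)\rangle_{\LL_S}
=c_T^2\bigl[J(L_T)-J(L_S)\bigr]+(c_T^2-c_S^2)\,J(L_S),
\end{equation*}
where
\begin{equation*}
J(L):=\iint K_k(y-z)\,\vphi_S(y)\,\vphi_T(z)\,d\HH^n_{|L}(y)\,d\HH^n_{|L}(z),
\end{equation*}
and to bound $|J(L_S)|$ and $|J(L_T)-J(L_S)|$ separately. The case $\alpha(S)\ge C_4$ is trivial from the crude bounds $|J(L)|\lesssim\ell(S)^n$ and $c_S,c_T\lesssim 1$, so we may assume $\alpha(S)\le C_4$. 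Then Lemma~\ref{lempr0}(c) and arguments parallel to Lemma~\ref{lempr2} applied to the neighbouring pair $S,T$ (both sitting inside a fixed dilate of $B_S$) give $c_S,c_T\approx 1$, $|c_S-c_T|\lesssim\alpha(S)$, and the existence of a rigid motion $\Phi:\R^d\to\R^d$ with $\Phi(L_S)=L_T$, $\|D\Phi-I\|\lesssim\alpha(S)$, and $|\Phi(x)-x|\lesssim\alpha(S)\ell(S)$ on $2B_S$.

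Since $m=[(j+k)/2]$ with $k>j$, the relevant scales satisfy $\ell(S)=2^{-m}$, $\supp K_k\subset B(0,2^{-k-2})$, and $2^{m-k}\approx 2^{-|j-k|/2}$. The oddness of $K_k$ forces $\int K_k(y-z)\,d\HH^n_{|L}(y)=0$ for every $z\in L$; hence the piece of the integrand in which $\vphi_S(y)$ is replaced by $\vphi_S(z)$ vanishes, leaving
\begin{equation*}
J(L)=\iint K_k(y-z)\,[\vphi_S(y)-\vphi_S(z)]\,\vphi_T(z)\,d\HH^n_{|L}(y)\,d\HH^n_{|L}(z).
\end{equation*}
Combining $|\vphi_S(y)-\vphi_S(z)|\lesssim|y-z|/\ell(S)\lesssim 2^{m-k}$, $|K_k|\lesssim 2^{kn}$, and that the effective support has $\HH^n_{|L}\times\HH^n_{|L}$ measure $\lesssim\ell(S)^n\cdot 2^{-kn}$, one obtains $|J(L)|\lesssim 2^{m-k}\ell(S)^n$. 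Together with $|c_T^2-c_S^2|\lesssim\alpha(S)$, this controls the second piece of the splitting by $\alpha(S)\,2^{-|j-k|/2}\ell(S)^n$.

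For $J(L_T)-J(L_S)$, transport the oddness-reduced expression for $J(L_T)$ to $L_S$ via the change of variables $y=\Phi(y')$, $z=\Phi(z')$, using that $\Phi$ is an isometry and $\HH^n_{|L_T}=\Phi_*\HH^n_{|L_S}$. Then telescope $J(L_T)-J(L_S)=A+B+C$, where (with $dH=d\HH^n_{|L_S}$ and $\Phi(y')-\Phi(z')=D\Phi\,(y'-z')$)
\begin{align*}
A&:=\iint\bigl[K_k(D\Phi(y'-z'))-K_k(y'-z')\bigr]\bigl[\vphi_S(\Phi(y'))-\vphi_S(\Phi(z'))\bigr]\vphi_T(\Phi(z'))\,dH(y')\,dH(z'),\\
B&:=\iint K_k(y'-z')\Bigl\{\bigl[\vphi_S(\Phi(y'))-\vphi_S(\Phi(z'))\bigr]-\bigl[\vphi_S(y')-\vphi_S(z')\bigr]\Bigr\}\vphi_T(\Phi(z'))\,dH(y')\,dH(z'),\\
C&:=\iint K_k(y'-z')\bigl[\vphi_S(y')-\vphi_S(z')\bigr]\bigl[\vphi_T(\Phi(z'))-\vphi_T(z')\bigr]\,dH(y')\,dH(z').
\end{align*}
In each summand the Lipschitz factor $\vphi_S(\cdot)-\vphi_S(\cdot)$ of size $\lesssim|y'-z'|/\ell(S)\lesssim 2^{m-k}$ is preserved (since $\Phi$ is an isometry), while the substitution producing the term yields an extra factor of $\alpha(S)$: for $A$ via $|K_k(D\Phi v)-K_k(v)|\lesssim\|\nabla K_k\|_\infty\,\alpha(S)|v|\lesssim 2^{k(n+1)}\alpha(S)|y'-z'|$; for $B$ via $|(\vphi_S\circ\Phi-\vphi_S)(y')-(\vphi_S\circ\Phi-\vphi_S)(z')|\lesssim\|\nabla\vphi_S\|_\infty\,|(D\Phi-I)(y'-z')|\lesssim\alpha(S)|y'-z'|/\ell(S)$; and for $C$ via $|\vphi_T(\Phi(z'))-\vphi_T(z')|\lesssim\|\nabla\vphi_T\|_\infty\,|\Phi(z')-z'|\lesssim\alpha(S)$. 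A routine accounting of size and support measure then gives $|A|+|B|+|C|\lesssim\alpha(S)\,2^{m-k}\ell(S)^n\approx 2^{-|j-k|/2}\alpha(S)\ell(S)^n$, completing the proof.

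The main obstacle is the telescoping in the last step: one must apply the oddness reduction \emph{before} changing variables, so that in each of $A,B,C$ the factor $\vphi_S(y)-\vphi_S(z)$ remains present and supplies the cancellation $2^{m-k}$. A direct comparison of $\iint K_k\vphi_S\vphi_T$ across $L_S$ and $L_T$ without first extracting this Lipschitz factor loses the cancellation and yields only the unacceptable bound $\alpha(S)\ell(S)^n$, missing the decisive factor $2^{-|j-k|/2}$ needed for summability in $j$ and $k$.
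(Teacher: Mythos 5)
Your argument is correct in substance and is essentially the paper's own proof in a slightly different arrangement. The paper compares the two flat measures through the orthogonal projection $p:L_S\to L_T$ (with the Jacobian-type constant $a$, $|a-1|\lesssim\alpha(S)$) and works with pointwise bounds for $T_k$ of flat measures, using the oddness of $K_k$ to insert the increment $\vphi_T(p(y))-\vphi_T(p(x))$ and thus gain the factor $2^{-k}/\ell(S)\approx 2^{-|j-k|/2}$; you instead work with the symmetrized double integral $J(L)$, use oddness once to insert $\vphi_S(y)-\vphi_S(z)$, and transport $\HH^n_{|L_T}$ to $\HH^n_{|L_S}$ by a rigid motion, which avoids the constant $a$. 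The mechanism is identical: oddness supplies the gain $2^{m-k}$, a near-identity affine map contributes one factor $\alpha(S)$ per substitution, and the comparisons $|c_S-c_T|\lesssim\alpha(S)$, angle $\lesssim\alpha(S)$ are taken from Lemma~\ref{lempr2}-type geometry for the neighbouring pair, exactly as the paper also assumes. Your terms $A$, $B$, $C$ are the analogues of the paper's $D_1$, $D_2$, $D_3$, and $(c_T^2-c_S^2)J(L_S)$ plays the role of $D_4$.

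Two small points need repair. First, the case $\alpha(S)\geq C_4$ is not settled by the crude bound $|J(L)|\lesssim\ell(S)^n$: that loses the factor $2^{-|j-k|/2}$, which is indispensable for the later summation over $j$ and $k$. However, your oddness reduction is valid for every $n$-plane $L$ (no smallness of $\alpha(S)$ is needed for it), so the refined bound $|J(L)|\lesssim 2^{m-k}\ell(S)^n$, together with $c_S,c_T\lesssim 1$, disposes of that case; this is precisely how the paper reduces to small $\alpha(S)$, via the sup-norm bounds \rf{eqff3} and \rf{eqff8}. Second, in the estimate of $B$ the increment of $g:=\vphi_S\circ\Phi-\vphi_S$ is not controlled by $\|\nabla\vphi_S\|_\infty\,|(D\Phi-I)(y'-z')|$ alone; you must also account for the translation part of $\Phi$, i.e. use $\|\nabla g\|_\infty\lesssim \|D\Phi-I\|\,\|\nabla\vphi_S\|_\infty+\|\nabla^2\vphi_S\|_\infty\,\sup_{2B_S}|\Phi(x)-x|\lesssim\alpha(S)/\ell(S)$, which requires the natural bound $\|\nabla^2\vphi_S\|_\infty\lesssim\ell(S)^{-2}$ for the partition of unity (the paper uses the same ingredient in its estimate of $D_2$). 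With these two adjustments your proof is complete and yields the stated bound.
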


\begin{proof}
Take $S,T\in\DD_m$ which are neighbors. 
To prove the claim we may assume that $\alpha(S)$ is small enough, so that the angle between $L_T$
and $L_S$ is $\leq\pi/4$. This is due to the fact that
$$\|T_k(\varphi_T\LL_T)\|_{\infty,L_T} +
\|T_k(\varphi_T\LL_S)\|_{\infty,L_S} \lesssim 2^{-|j-k|/2}.$$
See \rf{eqff2} and \rf{eqff8} below for some details.
 
Let $p:L_S\rightarrow L_T$ be the orthogonal projection from $L_S$ into $L_T$. 
Let $p^{-1}\LL_T$ be the image measure of $\LL_T$ by $p^{-1}$. So $p^{-1}\LL_T$ is a multiple of the $n$-dimensional Lebesgue measure
on $L_S$. It is easy to check that $p^{-1}\LL_T = a \LL_S$, where $a$~is some constant such that $|a-1| \lesssim \alpha(S)$.
Then we have
\begin{equation}\label{eqff0}
\begin{split}
\langle \varphi_S,T_k(\varphi_T\LL_T)\rangle_{\LL_T} & = \int \varphi_S T_k(\varphi_T\LL_T)\,d\LL_T \\
& = a \int (\varphi_S\circ p) \bigl(T_k(\varphi_T\LL_T)\circ p\bigr)\,d\LL_S.
\end{split}
\end{equation}

We split the inequality in the claim as follows:
\begin{equation} \label{eqff01}
\begin{split}
\Bigl| \langle \varphi_S,\, &T_k(\varphi_T\LL_T)\rangle_{\LL_T} -
\langle \varphi_S,T_k(\varphi_T\LL_S)\rangle_{\LL_S}\Bigr| \\
\leq &\; 
\Bigl|\langle \varphi_S,T_k(\varphi_T\LL_T)\rangle_{\LL_T} -
a \int (\varphi_S\circ p) 
T_k\bigl((\varphi_T\circ p)\LL_S\bigr)\,d\LL_S\Bigr|  \\
& \mbox{} + \Bigl|a \int (\varphi_S\circ p) T_k\bigl((\varphi_T\circ p) \,\LL_S\bigr)\,d\LL_S
- a \int (\varphi_S\circ p) T_k(\varphi_T\LL_S)\,d\LL_S\Bigr|  \\
& \mbox{} + \Bigl|a \int (\varphi_S\circ p) T_k(\varphi_T\LL_S)\,d\LL_S 
- a \int \varphi_S T_k(\varphi_T\LL_S)\,d\LL_S \Bigr|  \\
& \mbox{} + |a-1|\, \bigl|\langle \varphi_S,T_k(\varphi_T\LL_S)\rangle_{\LL_S}\bigr| =: D_1+ D_2 + D_3 + D_4.
\end{split}
\end{equation}
We will show that each of the four terms on the right hand side of the above inequality is~$\lesssim 
2^{-|j-k|/2}\alpha(S) \mu(S)$.

For the first term, by \rf{eqff0}, it is enough to show that if $x\in L_S$, then
\begin{equation}\label{ff1}
\bigl|\bigl(T_k(\varphi_T\LL_T)\circ p\bigr)(x) - T_k\bigl((\varphi_T\circ p)\LL_S\bigr)(x)\bigr| \lesssim 2^{-|j-k|/2}\alpha(S).
\end{equation}
To this end we set
\begin{align*}
\bigl(T_k(\varphi_T\LL_T)\circ p\bigr)(x) & = \int K_k(p(x)-y) \varphi_T(y) \,d\LL_T(y)\\
& = a \int K_k(p(x)-p(y)) \varphi_T(p(y)) \,d\LL_S(y).
\end{align*}
Since $p$ is an affine map, it can be written as $p= b+\tilde p$, where $b$ is some constant and $\tilde p$ is linear. So we have
$K_k(p(x)-p(y)) = \bigl(K_k\circ \tilde p\bigr)(x-y)$. Using the oddness of $K_k\circ \tilde p$ (recall also that $x\in L_S$), we get
\begin{align*}
\bigl(T_k(\varphi_T\LL_T)\circ p\bigr)(x) & =  a \int \bigl(K_k\circ \tilde p\bigr)(x-y) \varphi_T(p(y)) \,d\LL_S(y)\\
& = a \int \bigl(K_k\circ \tilde p\bigr)(x-y) \bigl(\varphi_T(p(y))-\varphi_T(p(x))\bigr) \,d\LL_S(y).
\end{align*}
Again by the oddness of $K_k$,
\begin{equation}\label{eqff1}
T_k\bigl((\varphi_T\circ p)\LL_S\bigr)(x) = \int K_k(x-y) \bigl(\varphi_T(p(y)) - \varphi_T(p(x))\bigr)\,d\LL_S(y).
\end{equation}
Therefore, 
\begin{equation} \label{eqff2}
\begin{split}
\bigl|\bigl(T_k(\varphi_T\LL_T)\circ p\bigr)(x) &  - T_k\bigl((\varphi_T\circ p)\LL_S\bigr)(x)\bigr|  \\
\leq &\; |a-1| \bigl|T_k\bigl((\varphi_T\circ p)\LL_S\bigr)(x)\bigr| \\
& \mbox{}+
|a| \int \bigl|K_k\circ \tilde p - K_k\bigr|(x-y) \,\bigl|\varphi_T(p(y))-\varphi_T(p(x))\bigr| \,d\LL_S(y).
\end{split}
\end{equation}
To estimate the term $\bigl|T_k\bigl((\varphi_T\circ p\bigr)\LL_S)(x)\bigr|$ we use the identity \rf{eqff1}. Since
$\supp(K_k)\subset B(0,2^{-k-2})$, for $x\in L_S$ we derive 
\begin{equation}\label{eqff3}
\begin{split}
\bigl|T_k\bigl((\varphi_T\circ p)\LL_S\bigr)(x)\bigr|& \leq \|\nabla (\varphi_T\circ p)\|_\infty 2^{-k} 
\int |K_k(x-y)| \,d\LL_S(y) \\ &\lesssim \frac{2^{-k}}{\ell(S)}\approx 2^{-|j-k|/2}.
\end{split}
\end{equation}
To estimate the last integral in \rf{eqff2}, we take into account that
$$\bigl|K_k\circ \tilde p - K_k\bigr|(x-y) \lesssim \|\nabla K_k\|_\infty |\tilde p(x-y)-(x-y)| \lesssim \frac{\|\tilde p -I\| 
|x-y|}{2^{-k(n+1)}}.$$
It is easy to check that $\|\tilde p -I\|\lesssim \alpha(S)$. Moreover, we can assume 
$|y-x|\lesssim 2^{-k}$. So we get $\bigl|K_k\circ \tilde p - K_k\bigr|(x-y)\lesssim \alpha(S) 2^{kn}$. Thus
\begin{align*}
\int \bigl|K_k\circ \tilde p \;-\; & K_k\bigr|(x-y) \bigl|\varphi_T(p(y))- \varphi_T(p(x))\bigr| \,d\LL_S(y) \\
 &\lesssim \alpha(S) \|\nabla (\varphi_T\circ p)\|_\infty 2^{-k} 
\lesssim \alpha(S) \,\frac{2^{-k}}{\ell(S)}\approx 2^{-|j-k|/2}\alpha(S).
\end{align*}
So \rf{ff1} follows from \rf{eqff2}, \rf{eqff3}, the last estimate, and the
fact that $|a-1|\lesssim\alpha(S)$.

Let us turn our attention to the term $D_2$ in \rf{eqff01}. Let us denote
$f(y) = \varphi_T(p(y)) - \varphi_T(y)$. By the oddness of $K_k$, for $x\in L_S$, we have
$$T_k\bigl((\varphi_T\circ p) \,\LL_S\bigr) (x) - T_k(\varphi_T\LL_S)(x)= \int
K_k(x-y) \bigl(f(y) - f(x)\bigr) \,d\LL_S(y).$$
Thus,
$$\bigl|T_k\bigl((\varphi_T\circ p) \,\LL_S\bigr) (x) - T_k(\varphi_T\LL_S)(x)\bigr| \lesssim \|\nabla f\|_\infty 2^{-k},$$
and so
\begin{equation}\label{eqff15}
|D_2| \lesssim \|\nabla f\|_\infty 2^{-k}\ell(S)^n.
\end{equation}
We will show that $\|\nabla f\|_\infty\lesssim \alpha(S)/\ell(S)$, and we will be done with $D_2$. Indeed, we have
\begin{align*}
|\nabla f(x)| &=  \bigl|\nabla\varphi_T(p(x))\,\cdot \tilde p - \nabla\varphi_T(x)\bigr| \\ &\leq 
\bigl|\nabla\varphi_T(p(x))\bigr| \,\|I - \tilde p\| + \bigl|\nabla\varphi_T(p(x)) -
\nabla\varphi_T(x)\bigr| \\
&\lesssim \frac{1}{\ell(S)}\,\alpha(S) + \|\nabla^2\varphi_T\|_\infty \,|x-p(x)| \lesssim \frac{\alpha(S)}{\ell(S)},
\end{align*}
as promised.

To deal with the term $D_3$ in \rf{eqff01}, we notice that for $x\in L_S$ we have
\begin{equation}\label{eqff8}
|T_k(\varphi_T\LL_S)(x)| \lesssim 2^{-|j-k|/2}.
\end{equation}
The proof is analogous to the one for \rf{eqff3}. We also have
\begin{equation}\label{eqff85}
|(\varphi_S\circ p)(x) - \varphi_S(x)|\leq \|\nabla \varphi_S\|_\infty |p(x) - x| \lesssim \frac{1}{\ell(S)}\,\alpha(S)\ell(S) = \alpha(S).
\end{equation}
From \rf{eqff8} and the preceding estimate it follows that $D_3\lesssim 2^{-|j-k|/2}\alpha(S)\mu(S)$.

Finally, the estimate for the term $D_4$ in  \rf{eqff01} follows from \rf{eqff3} and the fact that $|a-1|\lesssim\alpha(S)$.
\end{proof}

We are ready to estimate $\sum_{k>j} B_{j,k}^2$ now. By \rf{eqff88} and Claim \ref{cla1}, we have
$$\sum_{j,k:k>j} |B_{j,k}^2| \lesssim \sum_{j,k:k>j} \,
\sum_{S\in \DD_{m(j,k)}} \sum_{T\in N(S)} |T_j\mu(x_T)|\, 2^{-|j-k|/2}\alpha(S) \ell(S)^n.$$
Since $x_T\in T\cap L_T$, we have $|T_j\mu(x_T)|\lesssim\alpha(T)\lesssim \alpha(\wh S)$ for $T\in N(S)$,
where $\wh S$ denotes the parent (or a suitable ancestor) of $S$. Thus,
$$\sum_{j,k:k>j} |B_{j,k}^2| \lesssim \sum_{j,k:k>j} \,
\sum_{S\in \DD_{m(j,k)}}  2^{-|j-k|/2}\alpha(\wh S)^2 \mu(S).$$
Recalling that $m(j,k) = [(j+k)/2]$, we get
\begin{align*}
\sum_{j,k:k>j} |B_{j,k}^2|  & \lesssim \sum_{j\in\Z} \sum_{S\in \DD:\ell(S)\leq 2^{-j}}  \frac{\ell(S)}{2^{-j}}\,\alpha(\wh S)^2 \mu(S)\\
& = \sum_{S\in \DD} \alpha(\wh S)^2 \mu(S) \sum_{j:2^{-j} \geq \ell(S)} \frac{\ell(S)}{2^{-j}} \lesssim \sum_{S\in \DD} \alpha(S)^2 \mu(S).
\end{align*}


\subsubsection{Estimates for $B_{j,k}^1$ in \rf{eqabbb}}

We have
$$B_{j,k}^1 = \sum_{S\in \DD_m} \sum_{T\in N(S)} \bigl(T_j\mu(x_S)-T_j\mu(x_T)\bigr) \Bigl[
\langle \varphi_S,T_k(\varphi_T\mu)\rangle - \langle \varphi_S,T_k(\varphi_T\LL_S)\rangle_{\LL_S}\Bigr].$$
Let us estimate the difference $\langle \varphi_S,T_k(\varphi_T\mu)\rangle - \langle \varphi_S,T_k(\varphi_T\LL_S)\rangle_{\LL_S}$. Let $\{\vphi_Q\}_{Q\in\DD_k}$ be a partition of unity 
with $\vphi_Q\in\CC^\infty$,  $\|\nabla \vphi_Q\|_\infty\lesssim1/\ell(Q)$, and $\supp(\vphi_Q)\subset 2Q$ for each $Q\in\DD_k$, and set $\psi_Q= \vphi_Q\vphi_S$. We have
\begin{align}\label{eqs1s2}
 \langle \varphi_S,T_k(\varphi_T\mu)\rangle - \langle \varphi_S,T_k(\varphi_T\LL_S)\rangle_{\LL_S} & = 
\sum_{Q\in\DD_k} \Bigl(
\langle \psi_Q,T_k(\varphi_T\mu)\rangle - \langle \psi_Q,T_k(\varphi_T\LL_S)\rangle_{\LL_S}\Bigr)\\
& = 
\sum_{Q\in\DD_k} \!\!\Bigl(
\langle \psi_Q,T_k(\varphi_T\mu)\rangle - \langle \psi_Q,T_k(\varphi_T\LL_Q)\rangle_{\LL_Q}\Bigr)\nonumber\\
& 
+\!
\sum_{Q\in\DD_k} \!\!\Bigl(
\langle \psi_Q,T_k(\varphi_T\LL_Q)\rangle_{\LL_Q}\! - \langle \psi_Q,T_k(\varphi_T\LL_S)\rangle_{\LL_S}\!\Bigr)\!
=: S_1 + S_2. \nonumber
\end{align}

First we consider the sum $S_1$.
By the definition of $\alpha(Q)$, for $x\in 2Q$ we have
\begin{equation} \label{eqff55}
\begin{split}
\bigl|T_k (\varphi_T\mu)(x)&  - T_k(\varphi_T\LL_Q)(x)\bigr| \\ = & \;\biggl|\int K_k(x-y)\varphi_T(y)\,d\mu(y) - \!
\int K_k(x-y)\varphi_T(y)\,d\LL_Q(y)\biggr|
\lesssim \alpha(Q),
\end{split}
\end{equation}
since $\supp(K_k(x-\cdot)\varphi_T)\subset B_Q$ and $\|\nabla \bigl(K_k(x-\cdot)\varphi_T\bigr)\|_\infty \lesssim 1/\ell(Q)^{n+1}$.
Now we write
\begin{align*}
\bigl|\langle &\psi_Q,T_k(\varphi_T\mu)\rangle - \langle \psi_Q,T_k(\varphi_T\LL_Q)\rangle_{\LL_Q}\bigr| \\
\leq & \; \bigl|\langle  \psi_Q,T_k(\varphi_T\mu)\rangle \!- \!\langle \psi_Q,T_k(\varphi_T\LL_Q)\rangle\bigr| +
\bigl|\langle \psi_Q,T_k(\varphi_T\LL_Q)\rangle \!- \!\langle \psi_Q,T_k(\varphi_T\LL_Q)\rangle_{\LL_Q}\bigr|. 
\end{align*}
By \rf{eqff55} the first term on the right side is $\lesssim \alpha(Q)\mu(Q)$. By Fubini, the second term on the right
side equals
$$\bigl|\langle \varphi_T,T_k(\psi_Q\mu)\rangle_{\LL_Q} \!- \!\langle \varphi_T,T_k(\psi_Q\LL_Q)\rangle_{\LL_Q}\bigr|,$$
which by \rf{eqff55} is also $\lesssim \alpha(Q) \mu(Q)$. Thus,
\begin{equation}\label{eqs1}
|S_1| \lesssim \sum_{Q\in \DD_k:Q\subset3S} \alpha(Q) \mu(Q).
\end{equation}

Now we consider the sum $S_2$.

\begin{claim}\label{cla2}
	For all $S,T\in\DD_m$ which are neighbors and $Q\subset 3S$, we have
$$\Bigl|
\langle \psi_Q,T_k(\varphi_T\LL_Q)\rangle_{\LL_Q} -
\langle \psi_Q,T_k(\varphi_T\LL_S)\rangle_{\LL_S}\Bigr| \lesssim 
\sum_{P:Q\subset P\subset 3S} \alpha(P)\,\frac{\ell(P)}{\ell(S)}\,\mu(Q).
$$
\end{claim}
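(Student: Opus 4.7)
The plan is to interpolate between $\LL_Q$ and $\LL_S$ along a chain of dyadic ancestors of $Q$ and reduce the claim to a succession of comparisons of flat measures attached to pairs of cubes of comparable size, each handled by the four-term projection argument already used to prove Claim~\ref{cla1}. More precisely, let $Q=P_0\subsetneq P_1\subsetneq\cdots\subsetneq P_N$ be the chain with $P_{i+1}$ the parent of $P_i$ and $N$ the smallest index with $\ell(P_N)\geq\ell(S)$; since $Q\subset 3S$, every $P_i$ lies in a mild enlargement of $3S$, and one can pick a common ancestor $R$ of $P_N$ and $S$ with $\ell(R)\approx\ell(S)$ and $R\subset 3S$. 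Setting $F(P):=\langle\psi_Q,T_k(\varphi_T\LL_P)\rangle_{\LL_P}$, I would telescope
$$F(Q)-F(S)=\sum_{i=0}^{N-1}\bigl(F(P_i)-F(P_{i+1})\bigr)+\bigl(F(P_N)-F(R)\bigr)+\bigl(F(R)-F(S)\bigr).$$

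The core estimate is the per-step inequality: for $P\subset P'\subset 3S$ with $\ell(P')\lesssim\ell(P)$,
$$|F(P)-F(P')|\lesssim \alpha(P')\,\frac{\ell(P')}{\ell(S)}\,\mu(Q).\qquad (\ast)$$
To prove $(\ast)$ I follow the proof of Claim~\ref{cla1} almost verbatim. Write the orthogonal projection $p:L_P\to L_{P'}$ as $p=b+\tilde p$; by Lemma~\ref{lempr2}, $\|\tilde p-I\|\lesssim\alpha(P')$, the density comparison $p^{-1}\LL_{P'}=a\LL_P$ yields $|a-1|\lesssim\alpha(P')$, and the Hausdorff distance between the two planes on $B_{P'}$ is $\lesssim\alpha(P')\ell(P')$. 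Expanding $F(P')$ using the substitution $x=p(x')$ produces four error terms analogous to $D_1,\dots,D_4$ of Claim~\ref{cla1}, corresponding to replacing $K_k$ by $K_k\circ\tilde p$, $\varphi_T\circ p$ by $\varphi_T$, $\psi_Q\circ p$ by $\psi_Q$, and the overall factor $a-1$. In every case the oddness of $K_k$ is used to kill the leading constant in $\varphi_T$, which yields the pointwise estimate $|T_k(\varphi_T\LL)(x)|\lesssim\|\nabla\varphi_T\|_\infty\cdot 2^{-k}\lesssim 2^{-k}/\ell(S)$ for $x$ on the underlying plane. Combining this with the Lipschitz deviations of order $\alpha(P')\ell(P')$ times the Lip constants $\|\nabla\psi_Q\|_\infty\approx 1/\ell(Q)$, $\|\nabla\varphi_T\|_\infty\approx 1/\ell(S)$, $\|\nabla K_k\|_\infty\approx 2^{k(n+1)}$, and integrating against $\psi_Q\,d\LL_P$ (mass $\approx\mu(Q)$) makes the factor $\ell(Q)$ cancel in every term and leaves $\alpha(P')\ell(P')/\ell(S)\cdot\mu(Q)$, proving $(\ast)$.

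Summing $(\ast)$ along the telescoping chain bounds $|F(Q)-F(S)|$ by a sum of the form $\sum\alpha(P')\ell(P')/\ell(S)\cdot\mu(Q)$, where the $P'$ range over $\{P_1,\dots,P_N,R\}\subset\{P\in\DD:Q\subset P\subset 3S\}$, which is exactly what the claim asserts. The main obstacle is the careful verification of $(\ast)$: a blunt Lipschitz estimate of $T_k(\varphi_T\LL)$ only yields the much weaker bound $\alpha(P')\ell(P')^{n+1}/\ell(Q)^n\cdot\mu(Q)$, which is useless for the ensuing sum, so the oddness of $K_k$ must be exploited at every term to extract the extra factor $2^{-k}/\ell(S)=\ell(Q)/\ell(S)$. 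A secondary technical point is to produce the common ancestor $R$ for the last telescoping step (since $Q\subset 3S$ does not force $Q\subset S$) and to check that $R\subset 3S$, but this is a straightforward consequence of $\ell(R)\approx\ell(S)$ and $Q\subset 3S$.
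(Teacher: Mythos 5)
Your core computation is right, and it is essentially the paper's computation reorganized: the paper performs a \emph{single} splitting as in \rf{eqff01}, with the orthogonal projection $p$ taken directly from $L_Q$ to the plane of $S$, and feeds into the four terms the chained geometric bounds $\|\tilde p-I\|\lesssim\sum_{P:Q\subset P\subset 3S}\alpha(P)$ and $|p(x)-x|\lesssim\sum_{P:Q\subset P\subset 3S}\alpha(P)\ell(P)$ (so the chaining happens at the level of planes, via Lemma \ref{lempr2}), whereas you chain the bilinear forms themselves and apply the four-term argument once per scale. Your per-step inequality $(\ast)$ is correct, and you identify exactly the right mechanism: the dangerous term is the $D_3$-type one, where $\|\nabla\psi_Q\|_\infty\approx 1/\ell(Q)$ must be compensated by the bound $|T_k(\varphi_T\LL)(x)|\lesssim 2^{-k}/\ell(S)\approx\ell(Q)/\ell(S)$ on the carrying plane, which requires the oddness of $K_k$ as in \rf{eqff8}; the inputs $\|\tilde p-I\|\lesssim\alpha(P')$, $|a-1|\lesssim\alpha(P')$, $|p(x)-x|\lesssim\alpha(P')\ell(P')$ for a parent--child pair are precisely \rf{dh1}--\rf{dh2}.

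There is, however, a genuine gap at the top of your telescope. You close the chain through ``a common ancestor $R$ of $P_N$ and $S$ with $\ell(R)\approx\ell(S)$ and $R\subset 3S$,'' and such an $R$ need not exist: in the lattice $\DD$ (as in any dyadic-type lattice) two distinct cubes of the same generation lying next to each other can have their least common ancestor arbitrarily many generations up, so nothing forces $\ell(R)\approx\ell(S)$; and if $\ell(R)\gg\ell(S)$, your step $(\ast)$ applied to $S\subset R$ produces $\alpha(R)\ell(R)/\ell(S)$, which is neither controlled nor of the form appearing on the right-hand side of the claim (and $R\not\subset 3S$). Note the problem arises exactly when $Q\subset 3S\setminus S$; if $Q\subset S$ then $P_N=S$ and no final step is needed. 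The repair is to drop the ancestor: $P_N$ and $S$ are cubes of the same generation within distance $\lesssim\ell(S)$ of each other, so $B_{P_N}\cap B_S$ contains a ball of radius $\approx\ell(S)$ centered on $\supp(\mu)$, and the proof of Lemma \ref{lempr2} (run with a bump function supported in that common region, rather than using nestedness) gives $\dist_H$, angle and density bounds of order $(\alpha(P_N)+\alpha(S))\ell(S)$, $\alpha(P_N)+\alpha(S)$, $|c_{P_N}-c_S|\lesssim\alpha(P_N)+\alpha(S)$; one more application of your four-term argument then closes the telescope with a contribution $\lesssim(\alpha(P_N)+\alpha(S))\mu(Q)$. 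This is admissible for the claim up to the same harmless fudging of the constant in ``$P\subset 3S$'' that the paper itself allows (its hint writes $\sum_{P:Q\subset P\subset S}\alpha(P)$ although only $Q\subset 3S$ is assumed), and in the subsequent application it is absorbed into the bound $S_2\lesssim\AZ(S)\mu(S)$ in \rf{eqs2}.
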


\begin{proof}
The estimates are very similar to the ones in Claim \ref{cla1}, and so we only give some hints:
we assume that $\sum_{P:Q\subset P\subset 3S} \alpha(P)\,\frac{\ell(P)}{\ell(S)}$ is small enough and
we consider the orthogonal projection $p$ from $L_Q$ into $L_T$. Then we
split the term $\langle \psi_Q,T_k(\varphi_T\LL_Q)\rangle_{\LL_Q} -
\langle \psi_Q,T_k(\varphi_T\LL_S)\rangle_{\LL_S}$ like in \rf{eqff01}, so that we obtain terms analogous 
to $D_1,\ldots,D_4$. The new estimates for $D_1,D_2,D_4$ are very similar to the ones in the proof of Claim \ref{cla1}. The main difference is that now we have $\|p-I\|\lesssim \sum_{P:Q\subset P\subset S} \alpha(P).$

For the term $D_3$, inequality \rf{eqff85} should be replaced by the following:
$$|(\psi_Q\circ p)(x) - \psi_Q(x)|\leq \|\nabla \psi_Q\|_\infty |p(x) - x| \lesssim \frac{1}{\ell(Q)}\,
\sum_{P:Q\subset P\subset 3S} \alpha(P)\,\ell(P),$$
and then, by \rf{eqff8},
$$|D_3| \lesssim \frac{1}{\ell(S)}\,
\sum_{P:Q\subset P\subset 3S} \alpha(P)\,\ell(P)\mu(Q).$$
\end{proof}

So we have
\begin{equation}\label{eqs2}
S_2 \lesssim \sum_{Q\in \DD_k:Q\subset3S}    \sum_{P:Q\subset P\subset 3S} \alpha(P)\,\frac{\ell(P)}{\ell(S)}\,\mu(Q)
\lesssim  \sum_{P\subset 3S} \alpha(P)\,\frac{\ell(P)}{\ell(S)}\,\mu(P) \lesssim \AZ(S)\mu(S).
\end{equation}

On the other hand, if we denote by $R$ the cube in the generation $j$ which contains $S$, by Lemma \ref{lempr4} we have
\begin{align*}
\bigl|T_j\mu(x_S)-T_j\mu(x_T)\bigr| &\lesssim 
\frac{\alpha(R)\ell(R)+
\dist(x_S,L_R) + \dist(x_T,L_R)}{\ell(R)^2}\,\ell(S) \\ &\quad+
\frac{|\Pi_{L_R^\bot}(x_S-x_T)|}{\ell(R)}.
\end{align*}
Therefore, by \rf{eqs1} and \rf{eqs2},
\begin{align*}
|B_{j,k}^1| &\lesssim  \sum_{R\in \DD_j}\,\sum_{S\in \DD_m:S\subset R} 
\Bigl[\sum_{Q\in \DD_k:Q\subset3S} \alpha(Q) \mu(Q)+    \AZ(S)\mu(S)\Bigr]
\\ &\quad \times \sum_{T\in N(S)}  \biggl[\alpha(R)\frac{\ell(S)}{\ell(R)}
+\dist(x_S,L_R)\, \frac{\ell(S)}{\ell(R)^2}+ \dist(x_T,L_R)\, \frac{\ell(S)}{\ell(R)^2} +
\frac{|\Pi_{L_R^\bot}(x_S-x_T)|}{\ell(R)}\biggr].
\end{align*}
We have
$$\dist(x_S,L_R) + \dist(x_T,L_R) \lesssim \sum_{P:S\subset P\subset R} \alpha(P)\ell(P),$$
and also, arguing as in \rf{sususu},
$$|\Pi_{L_R^\bot}(x_S-x_T)|\lesssim \sum_{P:S\subset P\subset R}\alpha(P)\ell(S).$$
Therefore,
$$|B_{j,k}^1| \lesssim  \sum_{R\in \DD_j}\,\sum_{S\in \DD_m:S\subset R} 
\Bigl[\sum_{Q\in \DD_k:Q\subset3S} \alpha(Q) \mu(Q)+    \AZ(S)\mu(S)\Bigr] \sum_{P:S\subset P\subset R}\alpha(P)\frac{\ell(S)}{\ell(R)},$$
and so
\begin{align*}
\sum_{j,k} |B_{j,k}^1| & \lesssim \sum_{R\in \DD}\,
\sum_{Q\subset R} \alpha(Q) \mu(Q) \!\!\!\sum_{P:Q\subset P\subset R}\alpha(P)\frac{\ell(Q)^{1/2}}{\ell(R)^{1/2}} +
\sum_{R\in \DD}\,\sum_{S\subset R} 
 \AZ(S)\mu(S)\!\!\!\sum_{P:S\subset P\subset R}\alpha(P)\frac{\ell(S)}{\ell(R)} \\ & \lesssim
\sum_{R\in \DD}\,\sum_{S\subset R} 
 \AZ(S)\mu(S)\!\!\!\sum_{P:S\subset P\subset R}\alpha(P)\frac{\ell(S)^{1/2}}{\ell(R)^{1/2}}\\
& \approx
\sum_{S\in\DD} 
 \AZ(S)\mu(S)\sum_{P:S\subset P}\alpha(P)\frac{\ell(S)^{1/2}}{\ell(P)^{1/2}}
\end{align*}
By Cauchy-Schwartz we obtain,
$$\sum_{j,k} |B_{j,k}^1|\lesssim \Bigl(
\sum_{S\in\DD} 
 \AZ(S)^2\mu(S)\Bigr)^{1/2}
\Bigl(\sum_{P\in\DD}\alpha(P)^2\mu(P)\Bigr)^{1/2} \lesssim \sum_{S\in\DD}\alpha(S)^2\mu(S).$$


\section{Riesz transforms and quasiorthogonality} \label{secqo}

In this section we will prove Theorem \ref{teoqo}. First we introduce the functions $\vphi_m$ that are used to
define the kernels of the doubly truncated Riesz transforms.

\begin{definition}\label{defpsifi}
Let $\psi:[0,+\infty)\to[0,+\infty)$ be a non increasing $\CC^2$ function such that $\chi_{[0,1/4]}\leq\psi\leq
\chi_{[0,4]}$. Suppose moreover that $|\psi'|$ is bounded below away from zero in $[1/3,3]$. That is to 
say, 
\begin{equation}\label{cond17}
\chi_{[1/3,3]} \leq C_5|\psi'|.
\end{equation}
For $m\in\Z$ and $x\in\R^d$ denote $\rho_m(x) = 1 - \psi\bigl(2^{2m}|x|^2\bigr),$
and
$$\vphi_m(x) = \psi\bigl(2^{2m}|x|^2\bigr) - \psi\bigl(2^{2m+2}|x|^2\bigr).$$
We set
\begin{equation}\label{defrm}
R_m\mu(x) = \int \vphi_m(x-y)\,\frac{x-y}{|x-y|^{n+1}}\,d\mu(y).
\end{equation}
\end{definition}

Notice that $\supp(\rho_m)\subset \R^d\setminus B(0,2^{-m-1})$ and
$\supp(\vphi_m)\subset A(0,2^{-m-2},2^{-m+1})$.
Moreover, 
$$\sum_{m\in\Z} \vphi_m(x)=1 \mbox{ \,for all $x\neq 0$},$$
and so, formally,
$$\sum_{m\in\Z} R_m\mu(x) = R\mu(x),$$
where $R\mu$ stands for the $n$-dimensional Riesz transform.


\subsection{Preliminary lemmas}

Given a function $\vphi:[0,+\infty) \to[0,+\infty)$ and $\ve>0$, we denote
$$R_{\vphi,\ve}\mu(x) = \int \vphi\biggl(\frac{|x-y|^2}{\ve^2}\biggr)  \frac{x-y}{|x-y|^{n+1}}\,d\mu(y).$$
For the applications below one should think that $\vphi$ is of the form
$$\vphi(t) = 1 - \psi(t)  \quad\mbox{ or }\quad \vphi(t) = \psi(t) - \psi(t/4),$$
where $\psi$ is the function introduced in Definition \ref{defpsifi}. If $\ve= 2^{-m}$, in the first case we have
$\vphi(|x|^2/\ve^2) = \rho_m(x),$ and in the second one,
$\vphi(|x|^2/\ve^2) = \vphi_m(x).$

\begin{lemma}\label{lemtaylor}
Let $\vphi:[0,+\infty)\to[0,+\infty)$ be a $\CC^2$ function with $\supp(\vphi)\subset [1/4,\,+\infty)$ and 
$\supp(\vphi')\subset [1/4,\,4]$. Let $\ve>0$ and let $x\in\R^{d}$ such that $|x|\leq \ve/4$. We have
\begin{equation}\label{eqte}
R_{\vphi,\ve}\mu(x) - R_{\vphi,\ve}\mu(0) = T(x) + E(x),
\end{equation}
with
\begin{equation}\label{eqte2}
T(x) = \int\frac1{|y|^{n+1}}\,\biggl[
\vphi\biggl(\frac{|y|^2}{\ve^2}\biggr) \biggl(x-\frac{(n+1)(x\cdot y)y}{|y|^2}\biggr) + 
\vphi'\biggl(\frac{|y|^2}{\ve^2}\biggr) \frac{2(x\cdot y)y}{\ve^2}
\biggr] d\mu(y),
\end{equation}
and
$$|E(x)|\leq C_6\,\frac{|x|^2}{\ve^2}.$$
Given a unitary vector $v$, if $\supp(\vphi)\subset [1/4,4]$,
then we have
\begin{equation}\label{eqev}
|E(x)\cdot v| \leq C_6\biggl(\frac{|x|\,|x\cdot v|}{\ve^2} + \frac{|x|^2}{\ve^{n+3}} \int_{B(0,2\ve)}|y\cdot v|\,d\mu(y)\biggr).
\end{equation}
The constant $C_6$ only depends on $\|\vphi^{(k)}\|_\infty$, $k=0,1,2$.
\end{lemma}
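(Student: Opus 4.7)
The plan is to view this as a quantitative second-order Taylor expansion of the smooth (after regularization) kernel
$G(z):=\vphi(|z|^2/\ve^2)\,z/|z|^{n+1}$
at the base point $-y$, integrated against $\mu$. Writing $R_{\vphi,\ve}\mu(x)-R_{\vphi,\ve}\mu(0)=\int[G(x-y)-G(-y)]\,d\mu(y)$ and using the integral form of Taylor's remainder gives
\begin{equation*}
G(x-y)-G(-y)=x\cdot\nabla G(-y)+\int_0^1(1-t)\,x^{\!\top}D^2G(tx-y)\,x\,dt.
\end{equation*}
First I would verify by a direct computation that $\int x\cdot\nabla G(-y)\,d\mu(y)$ is precisely the term $T(x)$ claimed in \rf{eqte2}: differentiating the two factors of $G$ produces respectively the $\vphi(|y|^2/\ve^2)$-piece (with the $x-(n+1)(x\cdot y)y/|y|^2$ structure coming from $\nabla(z/|z|^{n+1})$) and the $\vphi'(|y|^2/\ve^2)$-piece (from differentiating the cutoff). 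This leaves
\begin{equation*}
E(x)=\int_0^1(1-t)\int x^{\!\top}D^2G(tx-y)\,x\,d\mu(y)\,dt
\end{equation*}
to be estimated.

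For the crude bound $|E(x)|\lesssim|x|^2/\ve^2$ I would use the uniform pointwise estimate $\|D^2G(z)\|\lesssim|z|^{-(n+2)}$, which holds on all of $\R^d\setminus\{0\}$ and survives the transition region because $\vphi\in\CC^2$ is normalized at scale $\ve$. Since $\supp G\subset\{|z|\geq\ve/2\}$ and $|x|\leq\ve/4$, we have $|tx-y|\gtrsim\max(\ve,|y|)$ for every $y$ in the effective support. Slicing the integral dyadically in annuli $A(0,2^k\ve,2^{k+1}\ve)$ and applying AD regularity ($\mu(B(0,r))\lesssim r^n$) gives
\begin{equation*}
\int\|D^2G(tx-y)\|\,d\mu(y)\lesssim\sum_{k\geq 0}\frac{(2^k\ve)^n}{(2^k\ve)^{n+2}}\lesssim\frac1{\ve^2},
\end{equation*}
uniformly in $t$, and \rf{eqte} with the remainder bound $|x|^2/\ve^2$ follows.

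For the refined directional estimate \rf{eqev}, assume now $\supp(\vphi)\subset[1/4,4]$, dot with the unit vector $v$, and exploit the factorization
\begin{equation*}
F_v(z):=v\cdot G(z)=\alpha(z)\,\beta(z),\qquad \alpha(z)=v\cdot z,\qquad \beta(z)=\frac{\vphi(|z|^2/\ve^2)}{|z|^{n+1}}.
\end{equation*}
Since $\alpha$ is linear, the Leibniz rule gives
\begin{equation*}
x^{\!\top}D^2F_v\,x=2(v\cdot x)\,(x\cdot\nabla\beta)+\alpha(z)\,x^{\!\top}D^2\beta\,x.
\end{equation*}
The scalar function $\beta$ is radial, compactly supported in $|z|\in[\ve/2,2\ve]$, and satisfies $|\nabla\beta|\lesssim\ve^{-(n+2)}$ and $|D^2\beta|\lesssim\ve^{-(n+3)}$. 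Substituting $z=tx-y$ and using $|\alpha(tx-y)|\leq|v\cdot x|+|v\cdot y|$ yields the pointwise bound
\begin{equation*}
|x^{\!\top}D^2F_v(tx-y)\,x|\lesssim\frac{|v\cdot x|\,|x|}{\ve^{n+2}}\chi_{\supp F_v}(tx-y)+\frac{|x|^2(|v\cdot x|+|v\cdot y|)}{\ve^{n+3}}\chi_{\supp F_v}(tx-y).
\end{equation*}
The support condition forces $|y|\leq 2\ve+|x|\leq 3\ve$, so integrating against $\mu$ and using $\mu(B(0,3\ve))\lesssim\ve^n$ for the first two pieces, together with $|x|\leq\ve/4$ to absorb $|v\cdot x|\,|x|^2/\ve^3$ into $|v\cdot x|\,|x|/\ve^2$, gives
\begin{equation*}
\int|x^{\!\top}D^2F_v(tx-y)\,x|\,d\mu(y)\lesssim\frac{|x|\,|v\cdot x|}{\ve^2}+\frac{|x|^2}{\ve^{n+3}}\int_{B(0,2\ve)}|v\cdot y|\,d\mu(y),
\end{equation*}
after adjusting the radius from $3\ve$ to $2\ve$ at the cost of harmless constants (redefining $\vphi$ or absorbing the small shell into the $B(0,2\ve)$-estimate via AD regularity). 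Integrating in $t$ completes \rf{eqev}.

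The only delicate point is keeping the factor $|v\cdot z|$ visible throughout the computation of $D^2F_v$: treating $F_v$ in coordinates would lose this linear factor and produce only the crude bound $|x|^2/\ve^2$. Writing $F_v=\alpha\beta$ with $\alpha$ linear is precisely what isolates the direction $v$ and allows the gain $|v\cdot x|$ or $|v\cdot y|$ in each term. Once this is set up, all remaining steps are routine size estimates on the cutoff $\vphi$ and its derivatives together with AD regularity.
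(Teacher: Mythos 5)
Your argument is correct, and it reaches the same conclusion by a slightly different execution of the same underlying idea. The paper Taylor-expands the scalar profile $g(s)=\vphi(s)/s^{(n+1)/2}$ in the single variable $s=|x-y|^2/\ve^2$ around $s_0=|y|^2/\ve^2$, multiplies by $(x-y)$, sorts terms by their homogeneity in $x$ (which yields exactly $T(x)$ plus explicit errors $E_1,E_2$, the latter involving a Lagrange point $\xi_{x,y}$ that requires a small case analysis), and proves \rf{eqev} by reading off the vector structure of $E_1,E_2$ dotted with $v$. You instead expand the full vector kernel $G(z)=\vphi(|z|^2/\ve^2)z/|z|^{n+1}$ in $z$ along the segment from $-y$ to $x-y$ with the integral remainder, check that the first-order term reproduces $T(x)$, get the crude bound from $\|D^2G(z)\|\lesssim|z|^{-(n+2)}$ plus $|tx-y|\gtrsim\max(\ve,|y|)$ and dyadic annuli, and extract the directional gain from the Leibniz factorization $v\cdot G(z)=(v\cdot z)\,\beta(z)$ with $\beta$ radial and supported in $|z|\approx\ve$. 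This is cleaner for the crude bound (no $\xi_{x,y}$ bookkeeping) and the factorization trick plays exactly the role that the explicit form of $E_1,E_2$ plays in the paper. One small caveat: your support analysis produces $\int_{B(0,3\ve)}|y\cdot v|\,d\mu$ rather than $\int_{B(0,2\ve)}|y\cdot v|\,d\mu$, and the shell $A(0,2\ve,3\ve)$ cannot literally be ``absorbed'' into the stated right-hand side by AD regularity (its contribution is of size $|x|^2/\ve^2$, which is not dominated by either term when $|x\cdot v|$ and the $B(0,2\ve)$ integral are small); the honest fixes are your other suggestion (rescale $\vphi$/$\ve$) or simply to state the lemma with $B(0,C\ve)$, which is immaterial for its use in Lemmas \ref{lemcas1}--\ref{lemcas2}. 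Note that the paper's own proof integrates over $A(0,\ve/4,3\ve)$ and has exactly the same cosmetic discrepancy, so this is not a gap specific to your argument.
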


\begin{proof}
The lemma follows by a direct application of Taylor's formula. Indeed, let $g(s) = \varphi(s)/s^{(n+1)/2}$. By Taylor's formula,
$$\frac{\varphi(s)}{s^{(n+1)/2}} = \frac{\varphi(s_0)}{s_0^{(n+1)/2}} + \frac{s_0\varphi'(s_0) -\frac{n+1}2
\varphi(s_0)}{s_0^{(n+3)/2}}\,(s-s_0) + g''(\xi)\,\frac{(s-s_0)^2}2,$$
for some $\xi\in [s,s_0]$. If we set
$s= |x-y|^2/\ve^2$ and $s_0=|y|^2/\ve^2$, and we multiply by $(x-y)$, we get
\begin{align}\label{eqll1}
\vphi\biggl(\frac{|x-y|^2}{\ve^2}\biggr)  \frac{x-y}{|x-y|^{n+1}} & = 
\vphi\biggl(\frac{|y|^2}{\ve^2}\biggr)  \frac{-y}{|y|^{n+1}} + \vphi\biggl(\frac{|y|^2}{\ve^2}\biggr)  \frac{x}{|y|^{n+1}}  \\
& \quad + \frac{\frac{|y|^2}{\ve^2}\varphi'\left(\frac{|y|^2}{\ve^2}\right) - \frac{n+1}2 \varphi\!\left(\frac{|y|^2}{\ve^2}\right) }{|y|^{n+3}}\,\bigl(|x|^2 - 2x\cdot y\bigr)(x-y) \nonumber\\
&\quad + g''(\xi_{x,y})\,\frac{\bigl(|x|^2 - 2x\cdot y\bigr)^2}{2\ve^{n+5}}\,(x-y), \nonumber
\end{align}
where $\xi_{x,y}\in \bigl[|y|^2/\ve^2,\,|x-y|^2/\ve^2\bigr]$.
If we integrate with respect to $\mu$ and $y$, we obtain \rf{eqte}, with
$E(x) = \int E(x,y)d\mu(y)$, where
\begin{align*}
E(x,y) &= \frac1{|y|^{n+3}}\,\biggl[\frac{|y|^2}{\ve^2}
\varphi'\left(\frac{|y|^2}{\ve^2}\right)
- \frac{n+1}2 \varphi\!\left(\frac{|y|^2}{\ve^2}\right)
\biggr]
\bigl[|x|^2x - |x|^2y - 2(x\cdot y)x \bigr] \\
& \quad + g''(\xi_{x,y})\, \frac{\bigl(|x|^2 - 2x\cdot y\bigr)^2}{2\ve^{n+5}}\,(x-y) =: E_1(x,y) + E_2(x,y).
\end{align*}

Now we have to estimate the term $E(x)$. From the assumptions on $\vphi$, we have
$$\biggl|\varphi\!\left(\frac{|y|^2}{\ve^2}\right)\biggl|+ \frac{|y|^2}{\ve^2} \biggl|
\varphi'\left(\frac{|y|^2}{\ve^2}\right)\biggl| \leq C,$$
and then it easily follows that
$$|E_1(x,y)| \lesssim \frac{|x|^2}{|y|^{n+2}}.$$
Now we deal with $E_2(x,y)$. We have
$$g''(s) = \frac{s^2\varphi''(s) - (n+1)s\,\varphi'(s) + \frac{(n+1)(n+3)}4 \varphi(s)}{s^{(n+5)/2}}.$$
By the properties of $\varphi$, we have $|g''(s)|\lesssim 1/{|s|^{(n+5)/2}}.$
To estimate $g''(\xi_{x,y})$ we may assume that $\xi_{x,y}>1/4$ since otherwise $g''(\xi_{x,y})=0$. 
Recall that $\xi_{x,y}\in \bigl[|y|^2/\ve^2,\,|x-y|^2/\ve^2\bigr]$, and so it easily follows that the condition $\xi_{x,y}>1/4$ implies that 
$|y|\geq \ve/4$, and then $\xi_{x,y}\approx|y|^2/\ve^2$.
Thus,
$|g''(\xi_{x,y})|\lesssim \ve^{n+5}/|y|^{n+5}$ in any case, and then
$$|E_2(x,y)|  \lesssim \frac{|x|^2}{|y|^{n+2}}.$$

Moreover, for $|x|\leq \ve/4$ and $|y|\leq \ve/4$, it is easy to check that $E_i(x,y)$, $i=1,2$, vanishes.
Then, integrating the estimates for $E_i(x,y)$ with respect to $\mu$ and $y$, one
gets $|E(x)|\lesssim |x|^2/\ve^2.$

Assume now that $\supp(\vphi)\subset [1/4,4]$ and take $v\in \R^{n+1}$. To estimate $|E(x)\cdot v|$ 
we may assume that $y\in A(0,\,\ve/4,\,3\ve)$ because otherwise $E_i(x,y)=0$, for $i=1,2$. We get
$$|E_1(x,y)\cdot v| \lesssim \frac{|x|}{\ve^{n+3}}\bigl(\ve\,|x\cdot v| + |x||y\cdot v|\bigr).$$
Concerning $E_2(x,y)$, we have
$$|E_2(x,y)\cdot v| \lesssim \frac1{\ve^{n+5}}\,|g''(\xi_{x,y}| (|x|^2-2x\cdot y)^2 |(x-y)\cdot v|
\lesssim \frac{|x|^2}{\ve^{n+3}}\,\bigl( |x\cdot v| + |y\cdot v|\bigr).$$
Integrating with the preceding inequalities with respect to $\mu$ and $y\in A(0,\,\ve/4,\,3\ve)$, we obtain~\rf{eqev}.
\end{proof}

We will also need the following result. See \cite[Lemma 5.8]{DS1} for the (easy) proof.

\begin{lemma} \label{lemli}
Given $Q\in\DD$, there are $n+1$ points $x_0,\ldots,x_n$ in $Q$ such that $\dist(x_j,L_{j-1})\geq C_7^{-1}\ell(Q)$, where $L_k$ denotes
the $k$-plane passing through $x_0,\ldots,x_k$, and where $C_7$ depends only on $n$ and $C_0$.
\end{lemma}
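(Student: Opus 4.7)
The plan is to construct $x_0, x_1, \ldots, x_n$ by induction on $j$, at each step using the AD regularity of $\mu$ to guarantee that points of $Q$ too close to the affine span of the previously chosen points cannot exhaust all of $Q$. First I would set $x_0 := z_Q$ (or any point of $Q$), and suppose inductively that, for some $1 \leq j \leq n$, points $x_0, \ldots, x_{j-1} \in Q$ have already been produced with $\dist(x_k, L_{k-1}) \geq C_7^{-1}\ell(Q)$ for each $1 \leq k \leq j-1$. The distance condition forces affine independence, so $L_{j-1}$ is an affine subspace of dimension exactly $j-1$. The task is to find $x_j \in Q$ with $\dist(x_j, L_{j-1}) \geq C_7^{-1}\ell(Q)$.

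I would argue by contradiction. Fix a small parameter $\eta > 0$ to be specified, and suppose every $x \in Q$ satisfies $\dist(x, L_{j-1}) < \eta\ell(Q)$. Since $Q \subset B(z_Q, \diam(Q))$, this means $Q$ sits inside the tube $U_{\eta\ell(Q)}(L_{j-1}) \cap B(z_Q, \diam(Q))$. The set $L_{j-1} \cap B(z_Q, 2\diam(Q))$ is a $(j-1)$-dimensional disk of radius at most $2\ell(Q)$, so an elementary volume count in $\R^{j-1}$ lets me cover it by at most $C\eta^{-(j-1)}$ balls of radius $\eta\ell(Q)$ centered on $L_{j-1}$. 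Inflating each of these by a factor of $2$ produces balls in $\R^d$ that cover the whole tube, and AD regularity gives $\mu$-mass at most $C_0(2\eta\ell(Q))^n$ for each such ball. Combining,
$$\mu(Q) \leq C\,\eta^{-(j-1)} \cdot C_0(2\eta\ell(Q))^n \leq C'\, \eta^{\,n-j+1}\,\ell(Q)^n.$$

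Since $j \leq n$, the exponent $n - j + 1 \geq 1$, so this upper bound contradicts the lower bound $\mu(Q) \gtrsim \ell(Q)^n$ (coming again from AD regularity) as soon as $\eta$ is chosen smaller than some threshold depending only on $n$ and $C_0$. Fixing such an $\eta$ once and for all and setting $C_7 := 1/\eta$ closes the induction and yields the desired $n+1$ points.

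The only real obstacle is confirming the covering count $C\eta^{-(j-1)}$ for the $(j-1)$-dimensional disk, which is a standard volume comparison in $\R^{j-1}$ with a constant depending only on $j-1 \leq n$; everything else is bookkeeping and a single application of AD regularity, so $C_7$ ends up depending only on $n$ and $C_0$, as claimed.
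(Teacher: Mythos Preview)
Your argument is correct. The paper itself does not supply a proof of this lemma; it simply cites \cite[Lemma 5.8]{DS1} and calls the proof ``easy''. Your inductive construction---using AD regularity to show that a tube of width $\eta\ell(Q)$ around a $(j-1)$-plane can carry $\mu$-mass at most $C\eta^{\,n-j+1}\ell(Q)^n$, and hence cannot contain all of $Q$ once $\eta$ is chosen small depending only on $n$ and $C_0$---is precisely the standard argument found in the David--Semmes reference. One minor technicality you glossed over is that the AD upper bound $\mu(B(x,r))\leq C_0 r^n$ is stated for $x\in\supp(\mu)$, whereas your covering balls are centered on $L_{j-1}$; this is harmlessly fixed by noting that any such ball meeting $\supp(\mu)$ is contained in a ball of twice the radius centered at a support point.
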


\begin{lemma}\label{lemcas1}
Let $\vphi:[0,+\infty)\to[0,+\infty)$ be a $\CC^2$ function with $\supp(\vphi)\subset [1/4,\,+\infty)$ and 
$\supp(\vphi')\subset [1/4,\,4]$. Suppose also that $\vphi$ is non decreasing and that
$\chi_{[1/3,3]}\leq C_8\vphi'$.
Let $Q\in\DD$ and $x_0,\ldots,x_n\in Q$ be like in Lemma \ref{lemli}.
Denote $r=\diam(Q)$ and let $\ve> 4 r$.
Suppose that $A(x_0,\ve/\sqrt2,\sqrt2\ve)\cap \supp(\mu)\neq\varnothing$.
Then any point $x_{n+1}\in 3Q$ satisfies
$$\dist(x_{n+1},L_0) \lesssim \ve \sum_{j=1}^{n+1} |R_{\vphi,\ve}\mu(x_j) - R_{\vphi,\ve}\mu(x_0)|+ 
\frac{r^2}{\ve},$$
where $L_0$ is the $n$-plane passing through $x_0,\ldots,x_n$.
\end{lemma}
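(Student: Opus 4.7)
The approach is to linearize each Riesz-transform difference via Lemma~\ref{lemtaylor} at $x_0$ and then solve for $s := \dist(x_{n+1},L_0)$ as the unknown in a small system of linear equations in $\R^d$. Set $u_j := x_j - x_0$; Lemma~\ref{lemtaylor} gives
$$R_{\vphi,\ve}\mu(x_j) - R_{\vphi,\ve}\mu(x_0) \;=\; T(u_j) + E(u_j),$$
where $T$ is the linear operator of \rf{eqte2} re-centered at $x_0$, $|E(u_j)| \lesssim r^2/\ve^2$, and the refined bound \rf{eqev} is available when we dot with a unit vector. Since $|u_{n+1}|\leq 3r$ while Lemma~\ref{lemtaylor} literally requires $|u|\leq\ve/4$, one should take $\ve$ slightly larger than $4r$ or verify that the estimates persist (with adjusted constants) for $|u|\leq 3\ve/4$; we assume this throughout.

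Let $v$ be the unit vector in the direction of the orthogonal projection of $u_{n+1}$ onto the subspace perpendicular to $L_0-x_0$, so that $v$ is orthogonal to $L_0-x_0$ and $u_{n+1}\cdot v = s$. By Lemma~\ref{lemli}, the vectors $u_1,\ldots,u_n$ form a well-conditioned basis of $L_0-x_0$, so we may write $u_{n+1} = \sum_{j=1}^n c_j u_j + s\,v$ with $|c_j|\lesssim 1$. Applying linearity of $T$ and taking the inner product with $v$ gives
$$s\,T(v)\cdot v \;=\; \bigl[R_{\vphi,\ve}\mu(x_{n+1})\!-\!R_{\vphi,\ve}\mu(x_0)\bigr]\!\cdot v \;-\; \sum_{j=1}^n c_j\bigl[R_{\vphi,\ve}\mu(x_j)\!-\!R_{\vphi,\ve}\mu(x_0)\bigr]\!\cdot v \;+\; \mathcal{E},$$
with $\mathcal{E} = -E(u_{n+1})\cdot v + \sum_{j=1}^n c_j E(u_j)\cdot v$. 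Using \rf{eqev}, the fact that $u_j\cdot v = 0$ for $j\leq n$ and $u_{n+1}\cdot v = s$, together with the AD-regularity bound $\int_{B(x_0,2\ve)}|(y-x_0)\cdot v|\,d\mu(y)\lesssim \ve^{n+1}$, one obtains $|\mathcal{E}|\lesssim r^2/\ve^2 + r|s|/\ve^2$.

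The main obstacle is to establish the quantitative lower bound $T(v)\cdot v \gtrsim 1/\ve$, uniformly in unit vectors $v$ orthogonal to $L_0-x_0$, with a constant depending only on $n,d,C_0,C_5,C_8$. Explicitly,
$$T(v)\cdot v \;=\; \int \frac{\vphi(t)+(2t\vphi'(t)-(n+1)\vphi(t))\cos^2\theta}{|z|^{n+1}}\,d\mu(y),$$
where $z=y-x_0$, $t=|z|^2/\ve^2$, $\cos\theta=\hat z\cdot v$; the integrand is supported on $|z|\geq\ve/2$ because $\vphi$ vanishes on $[0,1/4]$. The annulus hypothesis produces a point $y^*\in\supp(\mu)\cap A(x_0,\ve/\sqrt 2,\sqrt 2\,\ve)$, and AD regularity then gives a ball $B(y^*,c_0\ve)\subset A(x_0,\ve/\sqrt3,\sqrt 3\,\ve)$ of $\mu$-mass $\gtrsim\ve^n$ on which $t\in[1/3,3]$, whence $\vphi'(t)\geq C_8^{-1}$ and, by monotonicity of $\vphi$, $\vphi(t)\gtrsim 1$. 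One must then verify that the two terms of the integrand together dominate for \emph{every} orientation $v$: when the angle $\theta^*$ between $y^*-x_0$ and $v$ has $\cos^2\theta^*$ small the $\vphi(t)(1-(n+1)\cos^2\theta)$ term contributes positively and is $\gtrsim 1/\ve$; when $\cos^2\theta^*$ is close to $1$ the $2t\vphi'(t)\cos^2\theta$ term takes over. The delicate point is that the integrand $H(t,\theta)$ is not pointwise non-negative, so the argument requires an angular case-analysis (combined with contributions from the full support of $\mu$) to rule out cancellation.

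Granted this lower bound, the identity for $s\,T(v)\cdot v$ yields
$$|s|\;\lesssim\; \ve\sum_{j=1}^{n+1}\bigl|R_{\vphi,\ve}\mu(x_j)-R_{\vphi,\ve}\mu(x_0)\bigr| \;+\; \frac{r^2}{\ve} \;+\; \frac{r|s|}{\ve}.$$
Since $r/\ve<1/4$ by the hypothesis $\ve>4r$ (enlarging $\ve/r$ by a fixed factor if necessary to absorb the implicit constant), the last term is absorbed into $|s|$ on the left, which gives the desired inequality.
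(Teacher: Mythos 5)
Your reduction to a linear system is fine, and the upper-bound half of your argument (controlling $T(u_j)$ by the Riesz-transform differences plus $E$-terms, expressing $u_{n+1}=\sum_j c_ju_j+s\,v$ with $|c_j|\lesssim1$ via Lemma \ref{lemli}, and absorbing the $r|s|/\ve$ term) matches the spirit of the paper. But the pivotal step — the lower bound $T(v)\cdot v\gtrsim 1/\ve$ for the \emph{single} unit vector $v$ orthogonal to $L_0$ — is exactly the point you leave as a sketch ("angular case-analysis to rule out cancellation"), and this is a genuine gap: under the hypotheses of this lemma there is no flatness assumption on $\mu$ at scale $\ve$, and the negative part of the integrand, namely $\vphi(t)\bigl(1-(n+1)\cos^2\theta\bigr)/|y|^{n+1}$ on the cone $\cos^2\theta>1/(n+1)$ around $\pm v$ (in particular for $|y-x_0|>2\ve$, where $\vphi'$ vanishes and only this term survives), can carry $\mu$-mass of order $\ve^n$ per dyadic annulus and hence contribute a negative amount of size $\approx C_0/\ve$. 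The guaranteed positive contribution (from the ball of mass $\gtrsim\ve^n$ around the point $y^*$ given by the annulus hypothesis) is only $\gtrsim 1/(C_0C_8\,\ve)$, and its angular position relative to $v$ is not under your control; so for admissible $\vphi$ and AD-regular $\mu$ the single-direction quantity $T(v)\cdot v$ can be small or negative, and no case analysis on $y^*$ alone can rescue it, since the dangerous contributions come from other parts of $\supp(\mu)$. It is telling that the paper proves a single-direction lower bound only in Lemma \ref{lemcas2}, where the additional hypothesis $\sum_k\beta_2(B(x_0,2^kr))\leq\delta_0$ is imposed precisely to dominate these negative terms (and there $\vphi$ is compactly supported, killing the far field).

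The paper's proof of the present lemma sidesteps the issue by a trace trick you are missing: instead of one normal direction, it bounds from below the full sum $\sum_{j=1}^{n+1}T(e_j)\cdot e_j$ over an orthonormal basis $e_1,\dots,e_n$ of $L_0$ completed by $e_{n+1}=(x_{n+1}-z)/|x_{n+1}-z|$. Summing in \rf{eqte2} makes the integrand equal to $\frac1{|y|^{n+1}}\bigl[(n+1)\vphi(t)\frac{\sum_{i>n+1}y_{(i)}^2}{|y|^2}+2\vphi'(t)\frac{\sum_{i\leq n+1}y_{(i)}^2}{\ve^2}\bigr]$, which is pointwise nonnegative because $\vphi\geq0$ and $\vphi'\geq0$; cancellation is then impossible, and the annulus hypothesis alone yields the $1/\ve$ lower bound. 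The tangential terms $T(e_i)$, $i\leq n$, are controlled by $\frac1r\sum_j|T(x_j)|$ using the well-spread points, and $|T(e_{n+1})|\leq\frac1{\dist(x_{n+1},L_0)}|T(z-x_{n+1})|$, which closes the argument. Two smaller remarks: the refined error estimate \rf{eqev} you invoke is proved in Lemma \ref{lemtaylor} only under $\supp(\vphi)\subset[1/4,4]$, which fails here since $\vphi$ is supported in $[1/4,+\infty)$ — but the crude bound $|E(x)|\lesssim|x|^2/\ve^2$ suffices for this lemma; and your caveat about $|x_{n+1}-x_0|$ possibly exceeding $\ve/4$ is reasonable and harmless in the intended applications.
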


\begin{proof}
Without loss of generality we assume that $x_0=0$. We denote  by $z$ the orthogonal projection of $x_{n+1}$ onto $L_0$.
Then by Lemma \ref{lemtaylor} we have
\begin{equation}\label{eqlf1}
|T(x_j)|\lesssim |R_{\vphi,\ve}\mu(x_j)- R_{\vphi,\ve}\mu(x_0)| +  \frac{r^2}{\ve^2}.
\end{equation}
for $j=1,\ldots,n+1$.
Let $e_1,\ldots,e_n$ be an orthonormal basis of $L_0$, and set $e_{n+1} =
(x_{n+1}-z)/|x_{n+1}-z|$ (we suppose that $x_{n+1}\not\in L_0$), so that $e_{n+1}$ is 
a unitary vector orthogonal to $L_0$.
Since the points $x_j$, $j=1,\ldots,n$ are linearly independent with ``good constants'' we get 
$$|T(e_i)|\lesssim \frac1r \sum_{j= 1}^n|T(x_j)|
\lesssim \frac1r \sum_{j= 1}^n |R_{\vphi,\ve}\mu(x_j)- R_{\vphi,\ve}\mu(x_0)| + \frac{r}{\ve^2}
$$
for $i= 1,\ldots,n$. Also, since $z\in L_0$ and $|z|\lesssim r$, we have
$|T(z)|\lesssim \sum_{j= 1}^n|T(x_j)|,$
and so by~\rf{eqlf1} (with $j=n+1$),
\begin{equation}\label{eqvv1}
|T(e_{n+1})| = \frac1{\dist(x_{n+1},L_0)}\,|T(z-x_{n+1})| \lesssim \frac1{\dist(x_{n+1},L_0)}\Bigl(
 \sum_{j= 1}^{n+1} |R_{\vphi,\ve}\mu(x_j)- R_{\vphi,\ve}\mu(x_0)| + \frac{r^2}{\ve^2}\Bigr).
\end{equation}
Therefore,
\begin{equation}\label{eqt5}
\Bigl|\sum_{j=1}^{n+1}T(e_j)\cdot e_j\Bigr| \lesssim \frac1{\dist(x_{n+1},L_0)}\,\Bigl(
 \sum_{j= 1}^{n+1} |R_{\vphi,\ve}\mu(x_j)- R_{\vphi,\ve}\mu(x_0)|  + \frac{r^2}{\ve^2}\Bigr).
\end{equation}

On the other hand, from the definition of $T$ in \rf{eqte2} if we denote $y_{(i)} = y\cdot e_i$, we get
\begin{align}\label{eqt55}
\sum_{j=1}^{n+1}T(e_j)\cdot e_j & =
\int\frac1{|y|^{n+1}}\,\biggl[
\vphi\biggl(\frac{|y|^2}{\ve^2}\biggr) \frac{(n+1)\sum_{i> n+1}  y_{(i)}^2}{|y|^2} + 
\vphi'\biggl(\frac{|y|^2}{\ve^2}\biggr) \frac{2\sum_{i=1}^{n+1}  y_{(i)}^2}{\ve^2}
\biggr] d\mu(y)\\
& \gtrsim \frac1{\ve^{n+3}} \,\inf_{t\in [2/5,5/2]}\bigl(\varphi(t),\varphi'(t)\bigr) 
\int_{A(0,(2/5)^{1/2}\ve,(5/2)^{1/2}\ve)
}\sum_{i=1}^{d}y_{(i)}^2\,d\mu(y)  \nonumber\\
& \gtrsim \frac1{\ve^{n+3}}  \int_{A(0,(2/5)^{1/2}\ve,(5/2)^{1/2}\ve)}|y|^2\,d\mu(y) \gtrsim \frac1\ve, \nonumber
\end{align}
since $A(0,\ve/2^{1/2},2^{1/2}\ve)\cap\supp(\mu)\neq\varnothing$.
The lemma follows from \rf{eqt5} and \rf{eqt55}.
\end{proof}

\begin{remark}
For $m\geq p$, set $R_{m,p}\mu = \sum_{k=p}^m R_k\mu$ and $R_{(m)}\mu = \sum_{j\geq m}R_j \mu$.
Take $\ve=2^{-m}$ and $\ve_2 = 2^{-p}>\ve$, and
$\vphi(t) = 1 - \psi(t)$, where $\psi$ is the function introduced in Definition \ref{defpsifi}. 
Under the assumptions and notation of Lemma \ref{lemcas1},
$$\dist(x_{n+1},L_0) \lesssim \ve \sum_{j=1}^{n+1} \biggl(\sum_{k= p}^m |R_k\mu(x_j) - R_k\mu(x_0)|+ 
|R_{(p)}\mu(x_j) - R_{(p)}\mu(x_0)|\biggr) +
\frac{r^2}{\ve}.$$
It is easy to check that
$$|R_{(p)}\mu(x_j) - R_{(p)}\mu(x_0)|\lesssim \frac{r}{\ve_2}.$$
As a consequence, we get
\begin{equation}\label{eqrem}
\dist(x_{n+1},L_0) \lesssim \ve \sum_{j=1}^{n+1} \sum_{k= p}^m|R_k\mu(x_j) - R_k\mu(x_0)|
+ \frac{r^2}{\ve} + \frac{r\ve}{\ve_2}.
\end{equation}
\end{remark}

\begin{lemma}\label{lemcas2}
Let $\vphi:[0,+\infty)\to[0,+\infty)$ be a $\CC^2$ function supported in $[1/4,\,4]$ such that
$\chi_{[1/3,3]}\leq C_8\vphi$.
Let $Q\in\DD$ and $x_0,\ldots,x_n\in Q$ be like in Lemma \ref{lemli}.
Let $r=\diam(Q)$ and $\ve$ such that $2^{m-1}r<\ve\leq 2^m r$, with $m>4$ big enough.
Suppose that $A(x_0,\ve/\sqrt{2},\sqrt{2}\ve)\cap \supp(\mu)\neq\varnothing$, and also that $\dist(x_i,L_Q)\leq C_9\beta_2(Q)$ for $i=0,\ldots,n$, where $L_Q$ is the $n$-plane that minimizes $\beta_2(Q)$.
There exists some constant $\delta_0$ (depending only on $n,d,C_0,C_8,C_9$ and 
$\|\vphi^{(k)}\|_\infty$, $k=0,1,2$) such that if
$$\sum_{k=0}^m \beta_2(B(x_0,2^kr))\leq \delta_0,$$
then any point $x_{n+1}\in 3Q$ satisfies
$$\dist(x_{n+1},L_0) \lesssim \ve \sum_{j=1}^{n+1} |R_{\vphi,\ve}\mu(x_j) - R_{\vphi,\ve}\mu(x_0)|+ 
\frac{r^2}{\ve^{n+2}} \int_{B(x_0,2\ve)}\dist(y,L_0)d\mu(y),$$
where $L_0$ is the $n$-plane passing through $x_0,\ldots,x_n$.
\end{lemma}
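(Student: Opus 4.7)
The plan is to follow the structure of the proof of Lemma~\ref{lemcas1} but, because now $\vphi$ has compact support, replace the crude bound $|E(x)|\lesssim r^2/\ve^2$ by the direction-sensitive estimate \rf{eqev}. Translate coordinates so $x_0=0$ and, for each $j=1,\dots,n+1$, apply Lemma~\ref{lemtaylor} to write
\begin{equation*}
R_{\vphi,\ve}\mu(x_j)-R_{\vphi,\ve}\mu(x_0)=T(x_j)+E(x_j),
\end{equation*}
where $T$ is linear. Let $e_1,\dots,e_n$ be the ``good'' basis of $L_0$ obtained from the linear independence of $x_1,\dots,x_n$ (Lemma~\ref{lemli}), and let $e_{n+1}$ be a unit vector orthogonal to $L_0$ pointing toward $x_{n+1}$. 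Writing $z$ for the orthogonal projection of $x_{n+1}$ onto $L_0$, the identity $x_{n+1}=z+\dist(x_{n+1},L_0)\,e_{n+1}$ together with the linearity of $T$ gives
\begin{equation*}
\dist(x_{n+1},L_0)\,\bigl(T(e_{n+1})\cdot e_{n+1}\bigr)=T(x_{n+1})\cdot e_{n+1}-T(z)\cdot e_{n+1}.
\end{equation*}

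The heart of the argument is the lower bound $T(e_{n+1})\cdot e_{n+1}\gtrsim 1/\ve$. Substituting $x=e_{n+1}$ into \rf{eqte2} and using that $e_{n+1}\perp L_0$ (so $y_{(n+1)}$ in the appropriate basis is $\dist(y,L_0)$), one sees that for an exactly flat measure $c\HH^n_{L_0}$ with $c\approx 1$ the integral reduces to
\begin{equation*}
c\int_{L_0}\vphi(|y|^2/\ve^2)\,\frac{d\HH^n(y)}{|y|^{n+1}}\,\gtrsim\,\frac{1}{\ve},
\end{equation*}
thanks to the lower bound $\chi_{[1/3,3]}\leq C_8\vphi$ and the annular non-degeneracy assumption $A(x_0,\ve/\sqrt2,\sqrt2\ve)\cap\supp(\mu)\neq\varnothing$. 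For the actual $\mu$, the smallness hypothesis $\sum_{k=0}^m\beta_2(B(x_0,2^kr))\leq\delta_0$ together with the assumption that $x_0,\dots,x_n$ all lie within $C_9\beta_2(Q)\ell(Q)$ of $L_Q$ allows one to approximate $\mu\lfloor_{B(x_0,2\ve)}$ by a flat measure on an $n$-plane at distance $\ll\ve$ from $L_0$, and a direct error estimate on the integrand of \rf{eqte2} shows that the perturbation from the flat case is $o(1/\ve)$ when $\delta_0$ is small enough.

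For the upper bound on the right-hand side, invoke \rf{eqev} with $v=e_{n+1}$. For $j=0,\dots,n$ one has $(x_j-x_0)\cdot e_{n+1}=\dist(x_j,L_0)=0$, hence
\begin{equation*}
|E(x_j)\cdot e_{n+1}|\lesssim \frac{r^2}{\ve^{n+3}}\int_{B(x_0,2\ve)}\dist(y,L_0)\,d\mu(y),
\end{equation*}
while for $j=n+1$ the bound picks up an extra term $r\,\dist(x_{n+1},L_0)/\ve^2$. Expanding $z=\sum_{j=1}^n c_j\,x_j$ with $|c_j|\lesssim 1$ (again by Lemma~\ref{lemli}) and combining the estimates for $T(z)\cdot e_{n+1}$ and $T(x_{n+1})\cdot e_{n+1}$ yields
\begin{equation*}
\dist(x_{n+1},L_0)\,\bigl(T(e_{n+1})\cdot e_{n+1}\bigr)\lesssim\sum_{j=1}^{n+1}|R_{\vphi,\ve}\mu(x_j)-R_{\vphi,\ve}\mu(x_0)|+\frac{r\,\dist(x_{n+1},L_0)}{\ve^2}+\frac{r^2}{\ve^{n+3}}\int_{B(x_0,2\ve)}\dist(y,L_0)\,d\mu(y).
\end{equation*}
Using the lower bound $T(e_{n+1})\cdot e_{n+1}\gtrsim 1/\ve$ and absorbing $r\,\dist(x_{n+1},L_0)/\ve^2$ (which is at most $\tfrac12\dist(x_{n+1},L_0)/\ve$ since $\ve\geq 2^{m-1}r$ with $m>4$) into the left-hand side gives exactly the desired inequality.

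The main obstacle is the lower bound step: turning the hypothesis of small $\beta_2$ at all scales from $r$ up to $\ve$ into a concrete comparison between $\mu$ and a flat measure supported on a plane close to $L_0$, with a quantitative error on the relevant integral. The flat-case computation is routine, but the comparison requires a careful telescoping argument across dyadic scales, analogous to the one used in Lemma~\ref{lempr2} and Lemma~\ref{lemdif1}, to control how the best-fit plane rotates as one zooms out from scale $r$ to scale $\ve$.
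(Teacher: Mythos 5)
Your skeleton follows the paper's own strategy and most of it is sound: the Taylor expansion of Lemma \ref{lemtaylor} in the direction $v=e_{n+1}$ orthogonal to $L_0$, the identity expressing $\dist(x_{n+1},L_0)\,\bigl(T(e_{n+1})\cdot e_{n+1}\bigr)$ through $T(x_j)\cdot e_{n+1}$, the use of \rf{eqev} together with $|y\cdot v|\leq\dist(y,L_0)$ and $(x_j)\cdot e_{n+1}=0$ for $j\leq n$, and the final absorption of $r\,\dist(x_{n+1},L_0)/\ve^2$ are all as in the paper. The genuine gap is in the step you yourself single out as the main obstacle, the lower bound $T(e_{n+1})\cdot e_{n+1}\gtrsim1/\ve$, and the mechanism you propose for it fails. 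You want to approximate $\mu_{|B(x_0,2\ve)}$ by a flat measure $c\HH^n_{|L}$ and claim that the resulting perturbation of the integral in \rf{eqte2} is $o(1/\ve)$ for $\delta_0$ small. But smallness of the $\beta_2$'s only constrains where $\supp(\mu)$ sits relative to an $n$-plane; it gives no control whatsoever on the discrepancy between $\mu$ and a flat measure as measures. For example, $\mu=\rho\,\HH^n_{|L_0}$ with $\rho$ oscillating between $C_0^{-1}$ and $C_0$ has all $\beta_2$'s equal to zero, yet $\int\vphi(|y|^2/\ve^2)|y|^{-n-1}\,d\mu$ differs from its flat-measure counterpart by an amount comparable to $1/\ve$, not $o(1/\ve)$. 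A measure-versus-flat-measure comparison is precisely what the coefficients $\alpha$ were introduced for, and no $\alpha$-type information is available under the hypotheses of this lemma (nor could it be in its application to Theorem \ref{teoqo}, where only $\beta_2$ and Riesz-transform data are in play).

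The point that rescues the lemma is that no such comparison is needed. In the expression for $T(e_{n+1})\cdot e_{n+1}$ coming from \rf{eqte2}, the two potentially harmful pieces both carry the factor $(y\cdot e_{n+1})^2\leq\dist(y,L_0)^2$ and are supported where $|y|\approx\ve$, so that
$$T(e_{n+1})\cdot e_{n+1}\;\geq\;\int\vphi\Bigl(\frac{|y|^2}{\ve^2}\Bigr)\frac{d\mu(y)}{|y|^{n+1}}\;-\;\frac{C}{\ve^{n+3}}\int_{B(x_0,2\ve)}\dist(y,L_0)^2\,d\mu(y).$$
The first (nonnegative) term is $\gtrsim1/\ve$ directly from AD regularity, the hypothesis $A(x_0,\ve/\sqrt{2},\sqrt{2}\ve)\cap\supp(\mu)\neq\varnothing$, and $\chi_{[1/3,3]}\leq C_8\vphi$; no flat measure enters at all. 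The second term is where your telescoping idea belongs: the plane-rotation estimate $\dist(y,L_0)\lesssim\dist(y,L_{B(x_0,2\ve)})+\ve\sum_{k=1}^{m}\beta_2(B(x_0,2^kr))$ (using that $x_0,\dots,x_n$ are well separated in $Q$ and lie within $C_9\beta_2(Q)\ell(Q)$ of $L_Q$) gives $\ve^{-n-3}\int_{B(x_0,2\ve)}\dist(y,L_0)^2\,d\mu\lesssim\delta_0^2/\ve$, which is absorbed once $\delta_0$ is small. With this replacement of your lower-bound step, the remainder of your argument goes through as written.
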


\begin{proof}
The proof is similar in part to the one of Lemma \ref{lemcas1}. Like in Lemma \ref{lemcas1}
we assume that $x_0=0$ and we denote  by $z$ the orthogonal projection of $x_{n+1}$ onto $L_0$.
We denote $v= (x_{n+1}-z)/|x_{n+1}-z|$ (we suppose that $x_{n+1}\not\in L_0$), and we set $T^v(x) =T(x)\cdot v$,
$E^v(x) =E(x)\cdot v$, where $T(x)$ and $E(x)$ are defined in Lemma \ref{lemtaylor}. This lemma tells us that
\begin{equation}\label{eqlf2}
|T^v(x_j)|\lesssim |R_{\vphi,\ve}\mu(x_j)- R_{\vphi,\ve}\mu(x_0)| +  |E^v(x_j)|.
\end{equation}
for $j=1,\ldots,n+1$.
Arguing like in \rf{eqvv1} we deduce
\begin{equation}\label{eqtvv}
|T^v(v)| = \frac1{\dist(x_{n+1},L_0)}|T^v(z-x_{n+1})| \lesssim \frac1{\dist(x_{n+1},L_0)}
 \sum_{j= 1}^{n+1} \bigl(|R_{\vphi,\ve}\mu(x_j)- R_{\vphi,\ve}\mu(x_0)| + 
|E^v(x_j)|\bigr).
\end{equation}
Moreover, by \rf{eqev} we have
\begin{equation}\label{eqevv}
\sum_{j= 1}^{n+1} |E^v(x_j)| \lesssim \frac{r\dist(x_{n+1},L_0)}{\ve^2} + \frac{r^2}{\ve^{n+3}} \int_{B(0,2\ve)}\dist(y,L_0)\,d\mu(y),
\end{equation}
since $|y\cdot v|\leq\dist(y,L_0)$.

Now we need to estimate $T^v(v)$ from below. By the definition of $T$ in \rf{eqte2} we have
\begin{align*}
T^v(v) & = \int\frac1{|y|^{n+1}}\,\biggl[
\vphi\biggl(\frac{|y|^2}{\ve^2}\biggr) \biggl(1-\frac{(n+1)(y\cdot v)^2}{|y|^2}\biggr) + 
\vphi'\biggl(\frac{|y|^2}{\ve^2}\biggr) \frac{2(y\cdot v)^2}{\ve^2}
\biggr] d\mu(y)\\
 & \geq \int
\vphi\biggl(\frac{|y|^2}{\ve^2}\biggr) \frac1{|y|^{n+1}}d\mu(y) -(n+1) \int \vphi\biggl(\frac{|y|^2}{\ve^2}\biggr)\frac{\dist(y,L_0)^2}{|y|^{n+3}}
 d\mu(y)\\
&\quad -2\int \vphi'\biggl(\frac{|y|^2}{\ve^2}\biggr) \frac{\dist(y,L_0)^2}{\ve^2|y|^{n+1}} d\mu(y).
\end{align*}
Recall that $A(0,\ve/\sqrt{2},\sqrt{2}\ve)\cap \supp(\mu)\neq\varnothing$, and so
$$\int
\vphi\biggl(\frac{|y|^2}{\ve^2}\biggr) \frac1{|y|^{n+1}}d\mu(y) \gtrsim \frac1\ve.$$
Then we infer that
$$T^v(v) \geq \frac{C_{10}}{\ve} - C_{11}\int_{B(0,2\ve)}\frac{\dist(y,L_0)^2}{\ve^{n+3}}\,d\mu(y).$$
For $y\in B(0,2\ve)$, we have
$$\dist(y,L_0) \lesssim \dist(y,L_{B(0,2\ve)}) + \dist_H(L_0,L_{B(0,2\ve)}) \lesssim
\dist(y,L_{B(0,2\ve)}) + \ve\sum_{k=1}^m \beta_2(B(0,2^kr)),$$
where $L_{B(0,2\ve)}$ stands for the $n$-plane which minimizes $\beta_2(B(0,2\ve))$.
Therefore, we get
$$\int_{B(0,2\ve)}\frac{\dist(y,L_0)^2}{\ve^{n+3}}\,d\mu(y) \lesssim \frac1\ve\Bigl(\sum_{k=1}^m \beta_2(B(0,2^kr))\Bigr)^2 \lesssim \frac{\delta_0^2}\ve,
$$
and then,
$$T^v(v) \geq \frac{C_{10}}{\ve} - C_{12}\frac{\delta_0^2}\ve.$$
As a consequence, if $\delta_0$ is small enough,
$T^v(v)\gtrsim 1/\ve$. From this estimate, \rf{eqtvv}, and \rf{eqevv} we deduce that
$$\dist(x_{n+1},L_0) \lesssim \ve \sum_{j=1}^{n+1} |R_{\vphi,\ve}\mu(x_j) - R_{\vphi,\ve}\mu(x_0)|+ 
\frac{r^2}{\ve^{n+2}} \int_{B(x_0,2\ve)}\dist(y,L_0)d\mu(y)+ \frac{r\dist(x_{n+1},L_0)}{\ve}.$$
If $\ve/r$ is big enough, the lemma follows.
\end{proof}


\subsection{Proof of Theorem \ref{teoqo}}

The second statement of the theorem is a direct consequence of \rf{eqteoqo}. So we only have to deal with
\rf{eqteoqo}. To prove it, first we will estimate $\beta_2(P)$ for any $P\subset Q$, $P\in\DD_k$, with $P$ small enough.
To this end, take points $y_0,\ldots,y_n$ in $P$ as in Lemma \ref{lemli} and set $\ve= 2^{-m}$, with $\ve\gg\diam(P)$
to be fixed below.

Let $r=\diam(P)$. It is easy to check that there exists some constant $0<C_{13}<1$ small enough such that
any collection of points $x_0,\ldots,x_n$ with  $x_j\in B(y_j,C_{13}r)$, $j=0,\ldots,n$, also satisfies the conditions
of Lemma \ref{lemli} (maybe with some constant somewhat bigger that $C_7$). Consider the sets
$$G_j = \bigl\{x\in B(y_j,C_{13}r):\,\dist(x,L_P)\leq C_{14}\beta_2(P) \ell(P)\bigr\}\qquad j=0,\ldots,n,$$
where $L_P$ is the $n$-plane that minimizes $\beta_2(P)$.
By Chebyshev, if $C_{14}$ is chosen big enough, $\mu(G_j)\geq \mu(B(y_j,C_{13}r))/2$ for all $j$.

We distinguish two cases:

\vvv
\noi {\bf 1) } Suppose that $\sum_{m+1\leq i\leq k} \beta_2(B(z,2^{-i})) \leq \delta_0$
for any $z\in G_0$.

Take $x_0,\ldots,x_n$ so that $x_j\in G_j$ for each $j$. 
Notice that if $2^{m_0}>C_0^2$, then $A(x_0,2^{-i},2^{-i+1})\cap \supp(\mu)\neq\varnothing$ for some $i$ with
$m\leq i\leq m+m_0$ (here we need to assume that $\ve\lesssim \diam(\supp(\mu))$, which is true if $P$ is small enough).
Then, by Lemma \ref{lemcas2}, any point $x_{n+1}\in 3P$ satisfies
\begin{equation}\label{eqk12}
\dist(x_{n+1},L_0) \lesssim \ve \sum_{i=m}^{m+m_0} \sum_{j=1}^{n+1} |R_i\mu(x_j) - R_i\mu(x_0)|+ 
\frac{r^2}{\ve^{n+2}} \int_{B(x_0,2^{m_0+2}\ve)}\dist(y,L_0)d\mu(y),
\end{equation}
where $L_0$ is the $n$-plane passing through $x_0,\ldots,x_n$, and the constant in $\lesssim$ may depend on $m_0$.

Let $\wh{P}$ be the smallest ancestor of $P$ such that $3\wh{P}$ contains $B(x_0,2^{m_0+4}\ve)$ for all the points 
$x_0\in G_0$. Clearly, $\ell(\wh{P})\approx 2^{-m}=\ve$. 
Because $x_j \in G_j$ for $0\leq j \leq n$, it is easy to check that for all $y\in \wh{P}$,
\begin{align*}
\dist(y,L_0) &
\lesssim \dist(y,L_{\wh{P}}) +
d_H\bigl(L_{\wh{P}} \cap B(x_0,\diam(\wh{P})), L_0 \cap B(x_0,\diam(\wh{P}))\bigr) \\
& \lesssim \dist(y,L_{\wh{P}}) + \ve\sum_{M\in \DD: P\subset M\subset \wh{P}}  \beta_2(M).
\end{align*}
Therefore,
$$\int_{B(x_0,2^{m_0+2}\ve)}\dist(y,L_0)d\mu(y) \lesssim \ve^{n+1} \sum_{M\in \DD: P\subset M\subset \wh{P}}  
\beta_2(M).$$
By \rf{eqk12} and the preceding estimate, we get
\begin{equation}\label{eqk22}
\frac{\dist(x_{n+1},L_0)^2}{r^2} \lesssim \frac{\ve^2}{r^2} \sum_{i=m}^{m+m_0} \sum_{j=1}^{n+1} |R_i\mu(x_j) - R_i\mu(x_0)|^2+ 
\frac{r^2}{\ve^{2}} \Bigl(\sum_{M\in \DD: P\subset M\subset \wh{P}}  \beta_2(M)\Bigr)^2.
\end{equation}
Since 
$$\Bigl(\sum_{M\in \DD: P\subset M\subset \wh{P}}  \beta_2(M)\Bigr)^2 \lesssim \log(\ve/r) \sum_{M\in \DD: P\subset M\subset \wh{P}}  \beta_2(M)^2,$$
integrating \rf{eqk22} on $x_j\in G_j$ for $0\leq j \leq n$ and on $x_{n+1}\in 3P$, we obtain
\begin{equation}\label{eqk32}
\beta_2(P)^2\mu(P) \lesssim \frac{\ve^2}{r^2} \sum_{i=m}^{m+m_0} \|R_i \mu\|_{L_2(\mu|3P)}^2 + 
\frac{r^2}{\ve^{2}}\,\log\Bigl(\frac\ve r\Bigr) \sum_{M\in \DD: P\subset M\subset \wh{P}}  \beta_2(M)^2 \mu(P).
\end{equation}

\vvv
\noi {\bf 2) } Suppose now that $\sum_{m+1\leq i\leq k} \beta_2(B(z,2^{-i})) > \delta_0$
for some $z\in G_0$. This implies that
$$\sum_{M\in \DD: P\subset M\subset \wh{P}}  \beta_2(M) \gtrsim \delta_0,$$
where $\wh{P}$ is defined as in the first case.

Also as in case 1), we take $x_0,\ldots,x_n$ with $x_j\in G_j$ for all $j$. 
Recall that $A(x_0,2^{-i},2^{-i+1})\cap \supp(\mu)\neq\varnothing$ for some $i$ with
$m\leq i\leq m+m_0$, assuming that $2^{m_0}>C_0^2$ and that $P$ is small enough.
Then, by Lemma \ref{lemcas1} and the subsequent remark, if set $\ve_2=2^{p_0}\ve$ with $p_0\geq1$, any point $x_{n+1}\in 3P$ satisfies
$$
\dist(x_{n+1},L_0) \lesssim \ve \sum_{j=1}^{n+1} \sum_{k= m-p_0}^{m+m_0}|R_k\mu(x_j) - R_k\mu(x_0)|
+ \frac{r^2}{\ve} + \frac{r\ve}{\ve_2}.
$$
If we take $\ve_2$ such that $r\ve/\ve_2\approx r^2/\ve$, we get
\begin{align*}
\dist(x_{n+1},L_0) & \lesssim \ve \sum_{j=1}^{n+1} \sum_{k= m-p_0}^{m+m_0}|R_k\mu(x_j) - R_k\mu(x_0)|
+ \frac{r^2}{\ve} \\
& \lesssim \ve \sum_{j=1}^{n+1} \sum_{k= m-p_0}^{m+m_0}|R_k\mu(x_j) - R_k\mu(x_0)|
+ \frac{r^2}{\ve\delta_0} \sum_{M\in \DD: P\subset M\subset \wh{P}}  \beta_2(M).
\end{align*}
Operating as in \rf{eqk22} and \rf{eqk32}, we obtain
\begin{equation}\label{eqk56}
\beta_2(P)^2\mu(P) \lesssim \frac{\ve^2}{r^2} \sum_{i=m-p_0}^{m+m_0} \|R_i \mu\|_{L_2(\mu|3P)}^2 + 
\frac{r^2}{\ve^{2}\delta_0^2}\,\log\Bigl(\frac\ve r\Bigr) \sum_{M\in \DD: P\subset M\subset \wh{P}}  \beta_2(M)^2 \mu(P).
\end{equation}

\vvv
Notice that in both cases 1) and 2) the estimate \rf{eqk56} holds for any cube $P$ with $\ell(P)\leq C_{15}\ell(Q)$, where
$C_{15}$ is small enough and depends on $\ve,r$, etc., since $p_0<m$ and $\delta_0<1$. 
Recall that in \rf{eqk56} we have $r\approx\ell(P)$ and $\ve= 2^m\approx\ell(\wh{P})$. Given some constant $0<\tau\ll1$ to be fixed below
and any dyadic cube $P\subset Q$,
we take $\ve$ such that $r\approx\tau\ve$. 
The sum of \rf{eqk56} over $P\subset Q$ such that $\ell(P)\leq C_{15}\ell(Q)$ gives
$$\sum_{\substack{P\subset Q\\ \ell(P)\leq C_{15}\ell(Q)}}\beta_2(P)^2\mu(P) \leq
C(\tau) \sum_{i\in\Z} \|R_i \mu\|_{L_2(\mu|3Q)}^2 + 
\frac{C \tau^2|\log\tau|}{\delta_0^2}\sum_{P\subset Q}\sum_{M: P\subset M\subset \wh{P}}  \beta_2(M)^2 \mu(P).$$
Now we use $|J(P)-J(\wh P)| \approx |\log\tau|,$  as well as the trivial estimate $\beta_2(P)\lesssim 1$ for $\ell(P)> C_{15}\ell(Q)$, and then
we obtain
$$\sum_{\substack{P\subset Q}}\beta_2(P)^2\mu(P) \leq
C(\tau) \sum_{i\in\Z} \|R_i \mu\|_{L_2(\mu|3Q)}^2 + 
\frac{C_{16} \tau^2|\log\tau|^2}{\delta_0^2}\sum_{P\subset Q}  \beta_2(P)^2 \mu(P) + \mu(Q).$$
If we choose $\tau$ such that $C_{16} \tau^2|\log\tau|^2/\delta_0^2\leq 1/2$, the theorem follows.
\fiproof


\end{document}